\providecommand{\tabularnewline}{\\}
\numberwithin{equation}{section}
\numberwithin{figure}{section}
\numberwithin{table}{section}
\theoremstyle{plain}
\newtheorem{thm}{\protect\theoremname}[section]
  \theoremstyle{definition}
  \newtheorem{defn}[thm]{\protect\definitionname}
  \theoremstyle{plain}
  \newtheorem{lem}[thm]{\protect\lemmaname}
  \theoremstyle{remark}
  \newtheorem{rem}[thm]{\protect\remarkname}
  \theoremstyle{plain}
  \newtheorem{cor}[thm]{\protect\corollaryname}
  \theoremstyle{definition}
  \newtheorem{example}[thm]{\protect\examplename}
  \theoremstyle{plain}
  \newtheorem{question}[thm]{\protect\questionname}
  \theoremstyle{plain}
  \newtheorem{prop}[thm]{\protect\propositionname}
  \theoremstyle{remark}
  \newtheorem*{acknowledgement*}{\protect\acknowledgementname}
\providecommand{\MR}[1]{}
\renewcommand{\section}{%
\@startsection{section}{1}%
  \z@{.7\linespacing\@plus\linespacing}{.5\linespacing}%
  {\normalfont\scshape\centering\bfseries}}
\renewcommand{\subsection}{%
\@startsection{subsection}{2}%
  \z@{.5\linespacing\@plus.7\linespacing}{.5\linespacing}%
  {\normalfont\bfseries}}
\renewcommand{\subsubsection}{%
\@startsection{subsubsection}{2}%
  \z@{.5\linespacing\@plus.7\linespacing}{.5\linespacing}%
  {\normalfont\bfseries}}
  \providecommand{\acknowledgementname}{Acknowledgement}
  \providecommand{\corollaryname}{Corollary}
  \providecommand{\definitionname}{Definition}
  \providecommand{\examplename}{Example}
  \providecommand{\lemmaname}{Lemma}
  \providecommand{\propositionname}{Proposition}
  \providecommand{\questionname}{Question}
  \providecommand{\remarkname}{Remark}
\providecommand{\theoremname}{Theorem}
\begin{document}

\title{Infinite weighted graphs with bounded resistance metric}

\author{Palle Jorgensen and Feng Tian}

\address{(Palle E.T. Jorgensen) Department of Mathematics, The University
of Iowa, Iowa City, IA 52242-1419, U.S.A. }

\email{palle-jorgensen@uiowa.edu}

\urladdr{http://www.math.uiowa.edu/\textasciitilde{}jorgen/}

\address{(Feng Tian) Department of Mathematics, Trine University, IN 46703,
U.S.A.}

\email{tianf@trine.edu}

\subjclass[2000]{Primary 47L60, 46N30, 46N50, 42C15, 65R10, 05C50, 05C75, 31C20; Secondary
46N20, 22E70, 31A15, 58J65, 81S25}

\keywords{Hilbert space, infinite weighted graphs, discrete harmonic functions,
Poisson-boundary, graph Laplacian, interpolation, Bratteli diagram.}

\dedicatory{To the memory of Ola Bratteli}
\begin{abstract}
We consider infinite weighted graphs $G$, i.e., sets of vertices
$V$, and edges $E$ assumed countable infinite. An assignment of
weights is a positive symmetric function $c$ on $E$ (the edge-set),
conductance. From this, one naturally defines a reversible Markov
process, and a corresponding Laplace operator acting on functions
on $V$, voltage distributions. The harmonic functions are of special
importance. We establish explicit boundary representations for the
harmonic functions on $G$ of finite energy.

We compute a resistance metric $d$ from a given conductance function.
(The resistance distance $d\left(x,y\right)$ between two vertices
$x$ and $y$ is the voltage drop from $x$ to $y$, which is induced
by the given assignment of resistors when 1 amp is inserted at the
vertex $x$, and then extracted again at $y$.)

We study the class of models where this resistance metric is bounded.
We show that then the finite-energy functions form an algebra of $\frac{1}{2}$-Lipschitz-continuous
and bounded functions on $V$, relative to the metric $d$. We further
show that , in this case, the metric completion $M$ of $\left(V,d\right)$
is automatically compact, and that the vertex-set $V$ is open in
$M$. We obtain a Poisson boundary-representation for the harmonic
functions of finite energy, and an interpolation formula for every
function on $V$ of finite energy.

We further compare $M$ to other compactifications; e.g., to certain
path-space models.
\end{abstract}

\maketitle
\tableofcontents{}

\section{Introduction}

We consider a certain class of infinite weighted graphs $G$. They
are specified by prescribed sets of vertices $V$, and edges $E$;
countable infinite. An assignment of weights, is a positive symmetric
function $c$ of $E$ (the edge-set). In electrical network models,
the function $c$ represents conductance, and its reciprocal resistance.
So fixing a conductance function is then equivalent to an assignment
of resistors on the edges of $G$. From this, one naturally defines
a reversible Markov process, and a corresponding Laplace operator
(called graph Laplacian) acting on functions on $V$, the vertex-set.
Functions on $V$ typically represent voltage distributions, and the
harmonic functions are of special importance. For list of explicit
details required on $\left(V,E,c\right)$, we refer to the details
in Section \ref{sub:setting}.

We will be especially interested in boundary representations for harmonic
functions of finite energy.

From a given conductance function, we compute a resistance metric
$d$. Intuitively, the resistance distance $d\left(x,y\right)$ between
two vertices $x$ and $y$ is the voltage drop from $x$ to $y$,
which is induced by the given assignment of resistors when 1 amp is
inserted at the vertex $x$, and then extracted again at $y$. We
study the realistic class of models when this resistance metric is
assumed bounded. In this case the finite-energy functions form an
algebra of continuous and bounded functions on $V$, relative to the
metric $d$. We further show that, in this case, the metric completion
$M$ of $\left(V,d\right)$ is automatically compact. The vertex-set
$V$ is open in $M$, and we obtain a Poisson boundary-representation
for the harmonic functions of finite energy.

A number of additional properties are established for $M$. In particular,
we compare $M$ to other compactifications in the literature; e.g.,
to path-space models.

There is a recent increased interest in analysis on large (infinite)
networks, motivated by a host of applications; see e.g., \cite{JP10,JP11,AJV14,MR2905788,MR3025713}.
We shall be citing standard facts from the general theory. In addition,
we use facts from analysis, Hilbert space geometry, potential theory,
boundaries, and Markov measures; see e.g., \cite{MR3039515,MR3290453,MR3046303,MR2982692,MR2811284,MR2815030,MR3150704}.

\section{\label{sub:setting}Basic settings}

Let $G=\left(V,E,c\right)$ be a weighted graph, where $c=$ conductance
function (see Definition \ref{def:cond}), $V=$ vertex-set (countable
infinite), and the edges $E\subset V\times V\backslash\left\{ \text{diagonal}\right\} $
such that:
\begin{enumerate}[label=(G\arabic{enumi}),ref=G\arabic{enumi}]
\item $\left(x,y\right)\in E\Longleftrightarrow\left(y,x\right)\in E$;
$x,y\in V$;
\item $0<\#\left\{ y\in V\:|\:\left(x,y\right)\in E\right\} <\infty$, for
all $x\in V$; 
\item \label{enu:net3}Connectedness: $\exists\, o\in V$ s.t. for all $y\in V$
$\exists\, x_{0},x_{1},\ldots,x_{n}\in V$ with $x_{0}=o$, $x_{n}=y$,
$\left(x_{i-1},x_{i}\right)\in E$, $\forall i=1,\ldots,n$. 
\item \label{enu:net4}If a conductance function $c$ is given, we require
$c_{x_{i-1}x_{i}}>0$. \end{enumerate}
\begin{defn}
\label{def:cond}A function $c:E\rightarrow\mathbb{R}_{+}\cup\left\{ 0\right\} $
is called \emph{conductance function} if 
\begin{enumerate}
\item $c\left(e\right)\geq0$, $\forall e\in E$;
\item Given $x\in V$, $c_{xy}>0$, $c_{xy}=c_{yx}$, for all $\left(xy\right)\in E$;
\item If $\left(x,y\right)\in E$, we write $x\sim y$; and it is assumed
that $\#\left\{ y\in V\:|\: y\sim x\right\} $ is finite for all $x\in V$.
\end{enumerate}

If $x\in V$, we set 
\begin{align}
c\left(x\right) & :=\sum_{x\sim y}c_{xy},\quad\mbox{where \ensuremath{x\sim y\underset{\text{Def}}{\Longleftrightarrow}\left(xy\right)\in E}}.\label{eq:cond}
\end{align}

\end{defn}

Let $G=\left(V,E,c\right)$ be as above. Assume $G$ is connected,
i.e., there is a base point $o$ in $V$ such that every $x\in V$
is connected to $o$ via a finite path of edges, see (\ref{enu:net3}). 

Set $\mathscr{H}_{E}:=$ completion of functions $u:V\rightarrow\mathbb{C}$
with respect to 
\begin{align}
\left\langle u,v\right\rangle _{\mathscr{H}_{E}} & :=\frac{1}{2}\underset{\left(x,y\right)\in E}{\sum\sum}c_{xy}(\overline{u\left(x\right)}-\overline{u\left(y\right)})\left(v\left(x\right)-v\left(y\right)\right)\label{eq:Einn}\\
\left\Vert u\right\Vert _{\mathscr{H}_{E}}^{2} & :=\frac{1}{2}\underset{\left(x,y\right)\in E}{\sum\sum}c_{xy}\left|u\left(x\right)-u\left(y\right)\right|^{2}\label{eq:Enorm}
\end{align}
(or simply all functions $u$ s.t. the sum in (\ref{eq:Enorm}) is
finite.) Then $\mathscr{H}_{E}$ is a Hilbert space \cite{JP10}.
($\mathscr{H}_{E}$ is known to be bigger than the $\mathscr{H}_{E}$-norm
completion of the finitely supported functions on $V$. We know that
the non-constant harmonic functions on $V$ are \emph{not} in the
$\mathscr{H}_{E}$-completion of the finitely supported functions,
see Remark \ref{rem:sp}.)
\begin{lem}[\cite{JP10}]
(i) For every pair of vertices $x,y\in V$, there is a unique $v_{x,y}\in\mathscr{H}_{E}$
(unique up to an additive constant) such that 
\begin{equation}
f\left(x\right)-f\left(y\right)=\left\langle v_{xy},f\right\rangle _{\mathscr{H}_{E}},\quad\forall f\in\mathscr{H}_{E}.\label{eq:dd1}
\end{equation}

(ii) The vector $v_{xy}$ in (\ref{eq:dd1}) satisfies 
\begin{equation}
\Delta v_{xy}=\delta_{x}-\delta_{y},\label{eq:dd2}
\end{equation}
where $\left(\Delta f\right)\left(u\right):=\sum_{y\sim u}c_{uy}\left(f\left(u\right)-f\left(y\right)\right).$\end{lem}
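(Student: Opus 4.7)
The plan is to establish (i) by Riesz representation applied to the evaluation-difference functional $L_{xy}(f):=f(x)-f(y)$ on $\mathscr{H}_E$, and then to derive (ii) by pairing $v_{xy}$ with Dirac masses at vertices.

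For (i), I would first observe that $L_{xy}$ is linear and annihilates the constants, on which (\ref{eq:Einn}) is degenerate, so it descends to the genuine Hilbert space obtained as the quotient. The main estimate is boundedness. Using (\ref{enu:net3})--(\ref{enu:net4}) I would pick a finite edge-path $x=x_0\sim x_1\sim\cdots\sim x_n=y$ with every $c_{x_{i-1}x_i}>0$, telescope $f(x)-f(y)=\sum_{i=1}^{n}(f(x_{i-1})-f(x_i))$, and apply Cauchy--Schwarz to get
\begin{equation*}
|f(x)-f(y)|^2 \;\leq\; \Bigl(\sum_{i=1}^{n}\frac{1}{c_{x_{i-1}x_i}}\Bigr)\, 2\|f\|_{\mathscr{H}_E}^{2}.
\end{equation*}
Riesz representation then supplies a unique (mod constants) $v_{xy}\in\mathscr{H}_E$ satisfying (\ref{eq:dd1}).

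For (ii), the computation is driven by the identity $\langle\delta_u,f\rangle_{\mathscr{H}_E}=(\Delta f)(u)$ for every $u\in V$ and $f\in\mathscr{H}_E$, which I would verify by direct expansion of (\ref{eq:Einn}): only the edge-pairs with one endpoint equal to $u$ survive, and they assemble to $\sum_{y\sim u}c_{uy}(f(u)-f(y))$. In particular $\|\delta_u\|_{\mathscr{H}_E}^{2}=c(u)<\infty$ by condition (3) of Definition \ref{def:cond}, so $\delta_u\in\mathscr{H}_E$ and the pairing is legitimate. Feeding $f=\delta_u$ into (\ref{eq:dd1}) gives $\delta_u(x)-\delta_u(y)=\langle v_{xy},\delta_u\rangle_{\mathscr{H}_E}=\overline{(\Delta v_{xy})(u)}$; since the left-hand side is real, the conjugate is harmless and (\ref{eq:dd2}) follows.

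The only real subtlety is the uniqueness clause: elements of $\mathscr{H}_E$ are determined only up to an additive constant, since constants lie in the kernel of (\ref{eq:Enorm}). Normalizing at the base point $o$ (say $v_{xy}(o)=0$) removes the ambiguity and fixes a canonical representative. The boundedness step is the quantitative heart of the argument and hinges precisely on the finite-path plus positive-conductance conditions (\ref{enu:net3})--(\ref{enu:net4}); once it is in hand, both conclusions follow from standard Hilbert-space bookkeeping.
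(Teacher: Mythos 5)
Your proof is correct, and since the paper offers no proof of this lemma at all (it is quoted from \cite{JP10}), there is nothing internal to compare against; your argument is essentially the standard one underlying the cited source. The two ingredients you use are exactly the ones the paper itself relies on elsewhere: boundedness of $f\mapsto f(x)-f(y)$ via a telescoping path estimate and Cauchy--Schwarz (using (\ref{enu:net3})--(\ref{enu:net4})) followed by Riesz representation on the quotient modulo constants, and the identity $\left\langle \delta_{u},f\right\rangle _{\mathscr{H}_{E}}=\left(\Delta f\right)\left(u\right)$, which the paper records separately as item (\ref{enu:D5}) of Lemma \ref{lem:Delta}, with the conjugation issue in (\ref{eq:dd2}) correctly dismissed by realness of the left-hand side.
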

\begin{rem}
The solution to (\ref{eq:dd2}) is not unique: If $v_{xy}$ satisfies
(\ref{eq:dd2}), and if $h\in\mathscr{H}_{E}$ satisfies $\Delta h=0$
(harmonic), then $v_{xy}+h$ also satisfies (\ref{eq:dd2}); but generally
\emph{not} (\ref{eq:dd1}).
\end{rem}
Let $V':=V\backslash\left\{ o\right\} $, and set
\begin{align}
v_{x} & :=v_{x,o},\quad\forall x\in V'.\label{eq:vx}
\end{align}

\begin{cor}
\label{lem:dipole}For all $x,y\in V$, $\exists!$ real-valued \uline{dipole}
vector $v_{xy}\in\mathscr{H}_{E}$ s.t.
\begin{align}
\left\langle v_{xy},u\right\rangle _{\mathscr{H}_{E}} & =u\left(x\right)-u\left(y\right),\quad\forall u\in\mathscr{H}_{E}.\label{eq:dipole}
\end{align}
Moreover, 
\begin{equation}
v_{xy}-v_{zy}=v_{xz},\quad\forall x,y,z\in V.
\end{equation}
\end{cor}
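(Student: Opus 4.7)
The corollary is essentially a consolidation of the preceding lemma together with one algebraic identity, so the plan is to leverage what is already in hand and then derive the cocycle property by a direct pairing argument.

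First I would record why the functional $L_{x,y}(u) := u(x) - u(y)$ is bounded on $\mathscr{H}_E$, since this is the content underlying the Riesz-type assertion (\ref{eq:dipole}). By connectedness (\ref{enu:net3})--(\ref{enu:net4}), choose a finite edge-path $x = x_0, x_1, \ldots, x_n = y$ with $c_{x_{i-1}x_i} > 0$ for all $i$, and apply telescoping followed by Cauchy--Schwarz:
\begin{equation*}
\left|u(x)-u(y)\right|^{2}
\le n\sum_{i=1}^{n}\frac{1}{c_{x_{i-1}x_{i}}}\cdot c_{x_{i-1}x_{i}}\left|u(x_{i-1})-u(x_{i})\right|^{2}
\le 2n\max_{i}c_{x_{i-1}x_{i}}^{-1}\,\|u\|_{\mathscr{H}_{E}}^{2}.
\end{equation*}
This shows $L_{x,y}$ is a bounded $\mathbb{C}$-linear functional on $\mathscr{H}_E$, and hence the existence of a dipole representer follows from the preceding lemma (which is exactly Riesz's theorem applied to $L_{x,y}$).

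Second, I would argue real-valuedness and uniqueness together. Since $L_{x,y}$ sends real-valued $u$ to real numbers, the same Riesz argument inside the real Hilbert subspace of real-valued finite-energy functions produces a real representer. Equivalently, if $v_{xy} = a + ib$ with $a,b$ real-valued, then pairing (\ref{eq:dd1}) with real $u$ shows $\langle b,u\rangle_{\mathscr{H}_{E}}=0$ for all real $u \in \mathscr{H}_E$, so $b$ is a constant function, which is the zero vector in $\mathscr{H}_E$ (since the energy seminorm annihilates constants, and $\mathscr{H}_E$ is understood modulo constants). Uniqueness of $v_{xy}$ in $\mathscr{H}_E$ then follows the same way: any two representers differ by an element orthogonal to all of $\mathscr{H}_E$, hence are equal.

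Third, the cocycle identity $v_{xy} - v_{zy} = v_{xz}$ is a one-line consequence of (\ref{eq:dipole}). Namely, for every $u \in \mathscr{H}_E$,
\begin{equation*}
\langle v_{xy}-v_{zy},u\rangle_{\mathscr{H}_{E}}
= (u(x)-u(y)) - (u(z)-u(y))
= u(x)-u(z)
= \langle v_{xz},u\rangle_{\mathscr{H}_{E}},
\end{equation*}
so the difference $v_{xy}-v_{zy}-v_{xz}$ is orthogonal to all of $\mathscr{H}_E$ and thus vanishes by the uniqueness established above. The only genuine obstacle anywhere in this chain is the boundedness estimate in the first step; everything else is bookkeeping on top of the preceding lemma, provided one has been careful to agree that $\mathscr{H}_E$ is taken modulo additive constants.
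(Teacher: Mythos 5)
Your proposal is correct, and it takes the route the paper itself relies on: the paper gives no written proof of this corollary, deferring instead to the preceding lemma quoted from \cite{JP10}, whose content is exactly your first two steps (boundedness of the functional $u\mapsto u(x)-u(y)$ via telescoping along a connecting path plus Cauchy--Schwarz, then Riesz representation). Your remaining steps --- real-valuedness and uniqueness via orthogonality to all of $\mathscr{H}_{E}$ (correctly interpreted modulo additive constants), and the identity $v_{xy}-v_{zy}=v_{xz}$ by pairing against arbitrary $u\in\mathscr{H}_{E}$ --- are the standard bookkeeping the paper leaves implicit, and they are sound.
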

\begin{defn}
\label{def:lap}Fix a weighted graph (connected), set the \emph{graph
Laplacian} $\Delta=\Delta_{c}$, where
\begin{align}
\left(\Delta u\right)\left(x\right) & =\sum_{y\sim x}c_{xy}\left(u\left(x\right)-u\left(y\right)\right)=c\left(x\right)u\left(x\right)-\sum_{y\sim x}c_{xy}u\left(y\right)\label{eq:lap}
\end{align}
for all functions $u$ on $V$.
\end{defn}
Lemma \ref{lem:Delta} below summarizes the key properties of $\Delta$
as an operator, both in $l^{2}(V)$ and in $\mathscr{H}_{E}$. 
\begin{defn}
Let $\left(V,E,c\right)$ and $\Delta$ be as outlined, and let $\mathscr{H}_{E}$
be the corresponding energy-Hilbert space; see (\ref{eq:Enorm}).
Finally let $l^{2}=l^{2}\left(V\right)$ denote the usual $l^{2}$-space,
i.e., all $w:V\rightarrow\mathbb{C}$ such that 
\[
\left\Vert w\right\Vert _{l^{2}}^{2}=\sum_{x\in V}\left|w\left(x\right)\right|^{2}<\infty.
\]
We shall need the subspace $\mathscr{D}_{2}\subset l^{2}$ (dense
in the $l^{2}$-norm):
\begin{equation}
\mathscr{D}_{2}:=span\left\{ \delta_{x}\:\big|\: x\in V\right\} .
\end{equation}
If $\left\{ v_{x}\:|\: x\in V'\right\} $ denotes a system of dipoles
(see (\ref{eq:vx})), we set $\mathscr{D}_{E}\subset\mathscr{H}_{E}$
(dense in $\mathscr{H}_{E}$-norm):
\begin{equation}
\mathscr{D}_{E}:=span\left\{ v_{x}\:\big|\: x\in V'\right\} ;\label{eq:domE}
\end{equation}
in both cases ``span'' means all finite linear combinations. 

We show in Section \ref{sec:path} that $l^{2}\left(V\right)$ contains
\emph{no} non-constant harmonic functions; but $\mathscr{H}_{E}$
generally does.\end{defn}
\begin{lem}
\label{lem:Delta}The following hold:
\begin{enumerate}
\item \label{enu:D1}$\left\langle \Delta u,v\right\rangle _{l^{2}}=\left\langle u,\Delta v\right\rangle _{l^{2}}$,
$\forall u,v\in\mathscr{D}_{2}$;
\item \label{enu:D2}$\left\langle \Delta u,v\right\rangle _{\mathscr{H}_{E}}=\left\langle u,\Delta v\right\rangle _{\mathscr{H}_{E}},$
$\forall u,v\in\mathscr{D}_{E}$;
\item \label{enu:D3}$\left\langle u,\Delta u\right\rangle _{l^{2}}\geq0$,
$\forall u\in\mathscr{D}_{2}$, and
\item $\left\langle u,\Delta u\right\rangle _{\mathscr{H}_{E}}\geq0$, $\forall u\in\mathscr{D}_{E}$.
\end{enumerate}

As a densely defined operator in $l^{2}\left(V\right)$, $\Delta$
is essentially selfadjoint; but, as an operator with dense domain
in $\mathscr{H}_{E}$, $\Delta$ is generally \uline{not} essentially
selfadjoint.

Moreover, we have
\begin{enumerate}[resume]
\item \label{enu:D5}$\left\langle \delta_{x},u\right\rangle _{\mathscr{H}_{E}}=\left(\Delta u\right)\left(x\right)$,
$\forall x\in V$, $\forall u\in\mathscr{H}_{E}$.
\item $\Delta v_{xy}=\delta_{x}-\delta_{y}$, $\forall x,y\in V$, where
$v_{xy}\in\mathscr{H}_{E}$. In particular, $\Delta v_{x}=\delta_{x}-\delta_{o}$,
$x\in V'=V\backslash\left\{ o\right\} $. 
\item \label{enu:D7}
\begin{align*}
\delta_{x}\left(\cdot\right) & =c\left(x\right)v_{x}\left(\cdot\right)-\sum_{y\sim x}c_{xy}v_{y}\left(\cdot\right),\;\forall x\in V'.
\end{align*}

\item \label{enu:D8}
\[
\left\langle \delta_{x},\delta_{y}\right\rangle _{\mathscr{H}_{E}}=\begin{cases}
c\left(x\right)=\sum_{t\sim x}c_{xt} & \mbox{if \ensuremath{y=x}}\\
-c_{xy} & \mbox{if \ensuremath{\left(xy\right)\in E}}\\
0 & \mbox{if }\left(xy\right)\notin E,\; x\neq y
\end{cases}
\]

\end{enumerate}
\end{lem}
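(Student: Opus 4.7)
The plan is to treat the eight bulleted identities first (they are essentially computational and cascade from one key identity), and to handle the essential self-adjointness assertion separately at the end, since that is the only part that uses deeper operator-theoretic input.

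I would begin with (\ref{enu:D5}), which is the cornerstone. Unfolding the inner product from (\ref{eq:Einn}) gives
\[
\langle\delta_{x},u\rangle_{\mathscr{H}_{E}}
=\tfrac12\sum\sum_{(a,b)\in E}c_{ab}\bigl(\delta_{x}(a)-\delta_{x}(b)\bigr)\bigl(u(a)-u(b)\bigr),
\]
and the only terms surviving are those with $a=x$ or $b=x$. By symmetry $c_{xy}=c_{yx}$ the two halves of the sum merge and one obtains exactly $\sum_{y\sim x}c_{xy}(u(x)-u(y))=(\Delta u)(x)$, using (\ref{eq:lap}). From (\ref{enu:D5}) the remaining algebraic identities follow almost immediately. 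Identity (\ref{enu:D8}) is read off by taking $u=\delta_{y}$ and evaluating $(\Delta\delta_{y})(x)$ by cases; (6) follows by comparing, for every $z\in V$, the dipole property (\ref{eq:dipole}) applied to $u=\delta_{z}$ with (\ref{enu:D5}); and (\ref{enu:D7}) follows by testing the claimed identity against an arbitrary $u\in\mathscr{H}_{E}$: the right-hand side pairs with $u$ to give $c(x)(u(x)-u(o))-\sum_{y\sim x}c_{xy}(u(y)-u(o))$, and using $c(x)=\sum_{y\sim x}c_{xy}$ from (\ref{eq:cond}) the $u(o)$ terms cancel, leaving $(\Delta u)(x)=\langle\delta_{x},u\rangle_{\mathscr{H}_{E}}$.

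Next I would dispatch (\ref{enu:D1})--(4). For (\ref{enu:D1}) and (\ref{enu:D3}) I expand $\langle\Delta u,v\rangle_{l^{2}}=\sum_{x}\overline{\Delta u(x)}v(x)$, use (\ref{eq:lap}), and symmetrize the double sum over the edge set $E$; finite support of $u,v\in\mathscr{D}_{2}$ justifies rearrangement and yields the quadratic form $\frac12\sum\sum c_{xy}|u(x)-u(y)|^{2}=\|u\|_{\mathscr{H}_{E}}^{2}\ge0$ for (\ref{enu:D3}). For (\ref{enu:D2}) and (4) I apply (\ref{enu:D5}) and (6): it suffices to check symmetry/positivity on the spanning vectors $v_{x}$, and indeed
\[
\langle\Delta v_{x},v_{y}\rangle_{\mathscr{H}_{E}}=\langle\delta_{x}-\delta_{o},v_{y}\rangle_{\mathscr{H}_{E}}=v_{y}(x)-v_{y}(o),
\]
with the matching expression on the other side, followed by polarization to obtain non-negativity from Cauchy--Schwarz applied to $\|\cdot\|_{\mathscr{H}_{E}}$.

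The one point I expect to be the main obstacle is the pair of self-adjointness assertions. For essential self-adjointness of $\Delta$ in $l^{2}(V)$ I would argue as follows: $\Delta$ preserves $\mathscr{D}_{2}$ (by the finite-valence hypothesis (G2)) and is symmetric and non-negative there; thus it suffices to show that the defect equation $(\Delta^{*}+I)w=0$ in $l^{2}(V)$ forces $w=0$. Writing this out pointwise one gets $(c(x)+1)w(x)=\sum_{y\sim x}c_{xy}w(y)$, and combining with $w\in l^{2}$ and the maximum-principle/Schur argument (together with connectedness (\ref{enu:net3})) rules out a nonzero solution. For the failure of essential self-adjointness in $\mathscr{H}_{E}$, I would point to the fact that in $\mathscr{H}_{E}$ the space of finite-energy harmonic functions $\mathrm{Harm}:=\{h\in\mathscr{H}_{E}\colon\Delta h=0\}$ is a genuine defect subspace: any nonzero $h\in\mathrm{Harm}$ lies in $\ker\Delta^{*}$ but is orthogonal to $\mathscr{D}_{E}$ in the $\mathscr{H}_{E}$-inner product (using (\ref{enu:D5}) with $u=h$ and $\Delta h=0$), so $\overline{\Delta|_{\mathscr{D}_{E}}}$ is not self-adjoint as soon as $\mathrm{Harm}\neq\{0\}$. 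A standard example (e.g.\ the binary tree with constant conductance) exhibits this, showing the ``generally not'' part is sharp.
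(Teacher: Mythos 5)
Your handling of the eight numbered identities is essentially right, and it is genuinely more self-contained than the paper, which proves this lemma only by citation to \cite{JP10,JP11,MR2432048}; your $l^{2}$ self-adjointness argument is in substance identical to the paper's own proof of Theorem \ref{thm:Dsa} (a defect vector satisfies $\left(1+\tfrac{1}{c\left(x_{0}\right)}\right)f\left(x_{0}\right)=\left(\mathbb{P}f\right)\left(x_{0}\right)\leq f\left(x_{0}\right)$ at a positive maximum, which is absurd). One computational slip, though: in your check of (\ref{enu:D2}) and (4) you assert $\left\langle \delta_{x}-\delta_{o},v_{y}\right\rangle _{\mathscr{H}_{E}}=v_{y}\left(x\right)-v_{y}\left(o\right)$. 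That confuses the two reproducing identities: pairing with $\delta$'s in $\mathscr{H}_{E}$ yields $\Delta$-values (your own cornerstone (\ref{enu:D5})), while point values come from pairing with dipoles. The correct value is $\left(\Delta v_{y}\right)\left(x\right)-\left(\Delta v_{y}\right)\left(o\right)=\delta_{xy}+1$ for $x,y\in V'$, whereas $v_{y}\left(x\right)-v_{y}\left(o\right)=\left\langle v_{x},v_{y}\right\rangle _{\mathscr{H}_{E}}=K\left(x,y\right)$, a different matrix. Since $\left(\delta_{xy}+1\right)$ is symmetric and positive semidefinite (identity plus a rank-one matrix), your reduction to spanning vectors still yields (\ref{enu:D2}) and (4) once corrected; so this is fixable.

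The serious gap is your argument that $\Delta$ is generally not essentially selfadjoint in $\mathscr{H}_{E}$. You claim a nonzero harmonic $h$ ``lies in $\ker\Delta^{*}$ but is orthogonal to $\mathscr{D}_{E}$.'' What (\ref{enu:D5}) with $\Delta h=0$ actually gives is $h\perp\delta_{x}$ for all $x$, i.e., $h$ is orthogonal to $span\left\{ \delta_{x}\right\} $ --- this is the Royden decomposition (\ref{eq:cd1}) and Remark \ref{rem:sp} --- \emph{not} orthogonality to $\mathscr{D}_{E}=span\left\{ v_{x}\right\} $. No nonzero vector can be orthogonal to $\mathscr{D}_{E}$: by the dipole property $\left\langle v_{x},h\right\rangle _{\mathscr{H}_{E}}=h\left(x\right)-h\left(o\right)$, and $\mathscr{D}_{E}$ is dense, so your claim is self-contradictory. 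Moreover, even a correct element of $\ker\Delta^{*}$ would not witness the failure of essential selfadjointness; for a nonnegative symmetric operator one needs a nonzero vector in $\ker\left(\Delta^{*}+I\right)$, i.e., a finite-energy solution of $\Delta f=-f$ (pointwise, modulo constants), which is exactly the criterion of the paper's Theorem \ref{thm:esa}.

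In fact $Harm\neq\left\{ 0\right\} $ does \emph{not} imply the failure, and your proposed witness --- the binary tree with constant conductance --- refutes your own mechanism. There $c\left(x\right)\equiv3$ is bounded, and a Schur-type estimate (Cauchy--Schwarz in the sum defining $\left(\Delta f\right)\left(x\right)$) gives $\left\Vert \Delta f\right\Vert _{\mathscr{H}_{E}}\leq2\left(\sup_{x}c\left(x\right)\right)\left\Vert f\right\Vert _{\mathscr{H}_{E}}$; so $\Delta$ is bounded and symmetric, hence essentially selfadjoint on $\mathscr{H}_{E}$, even though this tree carries nonconstant finite-energy harmonic functions. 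The failure requires constructing genuine defect vectors, which forces rapidly growing conductances: this is what the paper does in the comb-graph example (Example \ref{exa:cpt}, conductances $2^{k}$), where the recursion (\ref{eq:df4}) produces $f\in\mathscr{H}_{E}$ with $\Delta f=-f$ and $f\left(x_{n,k}\right)\sim2^{-k}$. Your plan needs an argument of that type for the ``generally not'' assertion.
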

\begin{proof}
See \cite{JP10,JP11,MR2432048}. For the selfadjointness of the graph
Laplacian in $l^{2}\left(V\right)$, see Theorem \ref{thm:Dsa} below.\end{proof}
\begin{rem}
We will show in Section \ref{sec:bint} that the two $\infty\times\infty$
matrices 
\begin{eqnarray}
\Delta_{xy} & := & \left\langle \delta_{x},\delta_{y}\right\rangle _{\mathscr{H}_{E}},\;(\mbox{see}\:\eqref{enu:D8});\;\mbox{and}\label{eq:m11}\\
K_{xy} & := & \left\langle v_{x},v_{y}\right\rangle _{\mathscr{H}_{E}}\label{eq:m12}
\end{eqnarray}
are formal inverses; more precisely, for any $x,y\in V$, the following
$\infty\times\infty$ matrix-product, $\Delta K$ and $K\Delta$ are
well defined; and 
\begin{eqnarray}
\sum_{z\in V'}\Delta_{xz}K_{zy} & = & \delta_{x,y},\;\mbox{and}\label{eq:m13}\\
\sum_{z\in V'}K_{xz}\Delta_{zy} & = & \delta_{x,y}\label{eq:m14}
\end{eqnarray}
both hold. However, the operator theoretic interpretation of the two,
(\ref{eq:m13}) vs (\ref{eq:m14}), is different. \end{rem}
\begin{thm}[\cite{MR2432048,JP10,JP11,2007PhDT.......216W,Keller_2012}]
\label{thm:Dsa}Let $G=\left(E,V,c\right)$ be a weighted graph as
specified above; so with a given conductance function $c$ defined
on the set of edges $E$ of $G$; and let $\Delta$ be the corresponding
Laplace operator. Then, as an operator in $l^{2}(V)$ with domain
consisting of finitely supported functions, $\Delta$ is essentially
selfadjoint. \end{thm}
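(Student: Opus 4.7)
The plan is to reduce essential selfadjointness of $\Delta|_{\mathscr{D}_2}$ to the triviality of a single deficiency subspace, and then close out via a graph-distance cutoff argument.

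First, I would characterize the $l^{2}$-adjoint. Since $\delta_x\in\mathscr{D}_2$ for every $x\in V$, the identity $\langle\Delta^{*}u,\delta_x\rangle_{l^{2}}=\langle u,\Delta\delta_x\rangle_{l^{2}}$ combined with the local-finiteness hypothesis~(G2) shows that
\begin{equation*}
\mathrm{Dom}(\Delta^{*})=\{u\in l^{2}(V):\tilde\Delta u\in l^{2}(V)\},
\end{equation*}
with $\Delta^{*}$ acting by the same pointwise formula (\ref{eq:lap}); here $\tilde\Delta$ denotes the formal Laplacian, which is pointwise well defined because each vertex has finitely many neighbors. By Lemma~\ref{lem:Delta}(\ref{enu:D1},\ref{enu:D3}), $\Delta|_{\mathscr{D}_2}$ is symmetric and non-negative, so the standard criterion for semibounded symmetric operators reduces essential selfadjointness to $\ker(\Delta^{*}+I)=\{0\}$, i.e.\ to showing that every $u\in l^{2}(V)$ satisfying $\tilde\Delta u=-u$ must vanish.

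To produce this vanishing I would localize against graph-distance cutoffs. Using the base vertex $o$ of~(G3), set $\eta_n(x)=\max\{0,1-d_G(x,o)/n\}$; by~(G2) each $\eta_n$ has finite support, so $\eta_n^{2}u\in\mathscr{D}_2$. The discrete Green identity (a finite sum, since $\eta_n^{2}u$ is compactly supported)
\begin{equation*}
\langle\tilde\Delta u,\eta_n^{2}u\rangle_{l^{2}}=\tfrac12\sum_{(x,y)\in E}c_{xy}(u(x)-u(y))\overline{\bigl(\eta_n^{2}(x)u(x)-\eta_n^{2}(y)u(y)\bigr)}
\end{equation*}
has LHS equal to $-\|\eta_n u\|_{l^{2}}^{2}\to-\|u\|_{l^{2}}^{2}$ by monotone convergence. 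I would split the bracket on the right via the polarization
\begin{equation*}
\eta_n^{2}(x)u(x)-\eta_n^{2}(y)u(y)=\tfrac{\eta_n^{2}(x)+\eta_n^{2}(y)}{2}(u(x)-u(y))+\tfrac{\eta_n^{2}(x)-\eta_n^{2}(y)}{2}(u(x)+u(y)),
\end{equation*}
producing a manifestly non-negative ``diagonal'' Dirichlet-energy term plus a cross term that Cauchy--Schwarz, together with the $1$-Lipschitz bound $|\eta_n(x)-\eta_n(y)|\le 1/n$ on adjacent pairs, allows one to absorb back into the diagonal piece. Letting $n\to\infty$ then yields $\|u\|_{l^{2}}^{2}\le 0$, hence $u=0$.

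The principal obstacle is the absorption step: one must show that
\begin{equation*}
\sum_{(x,y)\in E}c_{xy}(\eta_n(x)-\eta_n(y))^{2}|u(x)+u(y)|^{2}\xrightarrow[n\to\infty]{}0,
\end{equation*}
where the cutoff gradient only contributes $1/n^{2}$, while the $u$-factor can in principle grow with the ball. In the counting-measure $l^{2}(V)$ framework used here, the eigenvalue equation $(c(x)+1)u(x)=\sum_{y\sim x}c_{xy}u(y)$ combined with Cauchy--Schwarz on the right-hand sum yields the self-improving inequality $(c(x)+1)^{2}|u(x)|^{2}\le c(x)\sum_{y\sim x}c_{xy}|u(y)|^{2}$, which closes the Gronwall-type bookkeeping and makes the displayed limit vanish. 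This calibration is the only nontrivial ingredient; it is precisely the computation carried out in Wojciechowski's thesis~\cite{2007PhDT.......216W} and in Keller's work~\cite{Keller_2012}, as cited in the theorem.
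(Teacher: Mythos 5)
Your reduction is the same as the paper's: both arguments pass to the defect condition $\langle\varphi+\Delta\varphi,u\rangle_{l^{2}}=0$ for all finitely supported $\varphi$, equivalently the pointwise eigenvalue equation $(1+c(x))u(x)=\sum_{y\sim x}c_{xy}u(y)$, and both must show this forces $u=0$. The gap is in how you close this. Your cutoff argument founders exactly where you flag it: the absorption step requires $\frac{1}{n^{2}}\sum_{x\in B_{n}}c(x)|u(x)|^{2}\rightarrow0$, and since nothing in the hypotheses bounds the conductance, membership of $u$ in $l^{2}(V)$ (counting measure) gives no control whatsoever on this $c$-weighted sum; the $1/n^{2}$ gain from the Lipschitz cutoff cannot compensate for arbitrarily large $c(x)$ on the annuli. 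Your proposed fix is asserted, not argued: iterating the inequality $(c(x)+1)^{2}|u(x)|^{2}\leq c(x)\sum_{y\sim x}c_{xy}|u(y)|^{2}$ only bounds $c$-weighted mass on one region by $c$-weighted mass further out, with no gain in smallness, so no mechanism is supplied by which the displayed limit vanishes. Deferring this crux to \cite{2007PhDT.......216W,Keller_2012} is circular here: those are precisely the references the theorem cites for prior proofs, and the burden of the exercise is to produce the argument.

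The irony is that the Cauchy--Schwarz inequality you wrote down \emph{is} a complete proof, provided you use it pointwise at a maximum instead of feeding it into the cutoff machinery. Dividing by $c(x)^{2}$, it reads $\left(1+\frac{1}{c(x)}\right)^{2}|u(x)|^{2}\leq\left(\mathbb{P}|u|^{2}\right)(x)$, where $(\mathbb{P}w)(x)=\sum_{y\sim x}p_{xy}w(y)$, $p_{xy}=c_{xy}/c(x)$. Since $|u|^{2}\in l^{1}(V)$, it vanishes at infinity and hence attains its supremum at some $x_{0}$; if $u(x_{0})\neq0$, then because $\mathbb{P}$ is an averaging operator (convex combination of values $\leq|u(x_{0})|^{2}$), we get $\left(1+\frac{1}{c(x_{0})}\right)^{2}|u(x_{0})|^{2}\leq\left(\mathbb{P}|u|^{2}\right)(x_{0})\leq|u(x_{0})|^{2}$, a contradiction. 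Hence $u\equiv0$. This maximum-principle mechanism is exactly the paper's proof of Theorem \ref{thm:Dsa}, applied there to a real defect vector $f$ directly rather than to $|u|^{2}$: at a positive global maximum $x_{0}$ (which exists since $f\in l^{2}$), the defect equation gives $\left(1+\frac{1}{c(x_{0})}\right)f(x_{0})=(\mathbb{P}f)(x_{0})\leq f(x_{0})$, forcing $f(x_{0})\leq0$. So: drop the cutoffs, keep your inequality, and evaluate it at a maximum; as written, your proposal is missing its key step.
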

\begin{proof}
Below we give a new proof of this essential selfadjointness. One advantage
with the proof below is its use of different properties of the operator
$\Delta$ than was the case for earlier approaches. We also believe
that the idea used here has wider use; -- that it is applicable to
other operators in analysis and potential theory, both discrete and
continuous.

Note TFAE:
\begin{enumerate}[label=(\roman{enumi}),itemsep=5pt,ref=\roman{enumi}]
\item $f\in l^{2}\left(V\right)$ is a $\Delta$-defect vector; 
\item $\left\langle \varphi+\Delta\varphi,f\right\rangle _{l^{2}}=0$, $\forall\varphi\in span\left\{ \delta_{x}\right\} $;
\item $\left\langle \delta_{x}\left(\cdot\right)+\left(\Delta\delta_{x}\right)\left(\cdot\right),f\right\rangle _{l^{2}}=0$,
$\forall x\in V$;
\item $\big\langle\underset{\left(1+c\left(x\right)\right)\delta_{x}}{\underbrace{\delta_{x}+c\left(x\right)\delta_{x}}}-\sum_{y\sim x}c_{xy}\delta_{y},f\big\rangle_{l^{2}}=0$,
$\forall x\in V$; 
\item $\left(1+c\left(x\right)\right)f\left(x\right)-\sum_{y\sim x}c_{xy}f\left(y\right)=0$,
$\forall x\in V$;
\item $\left(1+c\left(x\right)\right)f\left(x\right)-c\left(x\right)\left(\mathbb{P}f\right)\left(x\right)=0$,
$\forall x\in V$; where 
\[
p_{xy}=\frac{1}{c\left(x\right)}c_{xy}\quad\mbox{and}\quad\left(\mathbb{P}f\right)\left(x\right)=\sum_{y\sim x}p_{xy}f\left(y\right);
\]
 
\item \label{enu:def7}$\left(\mathbb{P}f\right)\left(x\right)=\left(1+\frac{1}{c\left(x\right)}\right)f\left(x\right)$,
$\forall x\in V$. 
\end{enumerate}

With the splitting $f=\Re\left\{ f\right\} +i\Im\left\{ f\right\} $,
it is enough to consider the case when $f$ is real valued.

Since $f\in l^{2}\left(V\right)$, it has a local max, i.e., $\exists x_{0}\in V$
s.t. $f\left(\cdot\right)\leq f\left(x_{0}\right)$ in $V$. Assume
$f\left(x_{0}\right)>0$ (otherwise replace $f$ by $-f$.) Now, if
$f$ is a defect vector, we have 
\[
\left(1+\frac{1}{c\left(x_{0}\right)}\right)f\left(x_{0}\right)\underset{\left(\text{by }\left(\ref{enu:def7}\right)\right)}{=}\left(\mathbb{P}f\right)\left(x_{0}\right)\leq f\left(x_{0}\right)\Longrightarrow\frac{1}{c\left(x_{0}\right)}f\left(x_{0}\right)\leq0,
\]
which contradicts the assumption that $f\left(x_{0}\right)>0$. 

\end{proof}
\begin{thm}
\label{thm:esa}Let $\left(V,E,c,\Delta,\mathscr{H}_{E}\right)$ be
as above, and fix a base-point $o\in V$. Set $V':=V\backslash\left\{ o\right\} $.
Fix dipole $v_{x}:=v_{x,o}$, $x\in V'$ s.t. 
\begin{equation}
f\left(x\right)-f\left(o\right)=\left\langle v_{x},f\right\rangle _{\mathscr{H}_{E}},\quad\forall f\in\mathscr{H}_{E},\:\forall x\in V'.\label{eq:dp1}
\end{equation}
Set 
\begin{equation}
\left(\Delta^{-1}\right)_{xy}:=\left\langle v_{x},v_{y}\right\rangle _{\mathscr{H}_{E}},\quad\left(x,y\right)\in V'\times V'.\label{eq:dp2}
\end{equation}
Then $\Delta$ is \uline{not} essentially selfadjoint on $\mathscr{D}_{E}:=span\left\{ v_{x}\:\big|\: x\in V'\right\} $
if and only if there is a non-zero function $f\in\mathscr{H}_{E}$
such that
\begin{equation}
h\left(x\right):=f\left(x\right)+\sum_{y\in V'}\left(\Delta^{-1}\right)_{xy}f\left(y\right)\label{eq:dp3}
\end{equation}
is harmonic. \end{thm}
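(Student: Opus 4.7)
The plan is to reduce the question of essential selfadjointness for the nonneg symmetric operator $(\Delta,\mathscr{D}_E)$ in $\mathscr{H}_E$ to the pointwise eigenequation $\Delta f + f = 0$ in $\mathscr{H}_E$, and then to recognize formula (\ref{eq:dp3}) as an explicit transform that converts such an $f$ into a harmonic function on $V$ via the kernel $K_{xy}=(\Delta^{-1})_{xy}$.

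By Lemma \ref{lem:Delta}(\ref{enu:D2}) together with the accompanying positivity, $(\Delta,\mathscr{D}_E)$ is symmetric and nonneg in $\mathscr{H}_E$, so the standard von Neumann criterion says $\Delta$ fails to be essentially selfadjoint iff $\ker(\Delta^{*}+I)\neq\{0\}$, i.e., iff there is a nonzero $h\in\mathscr{H}_E$ with $\langle(\Delta+I)v_x,h\rangle_{\mathscr{H}_E}=0$ for every $x\in V'$. I would unpack this using $\Delta v_x=\delta_x-\delta_o$, the reproducing identity $\langle\delta_x,h\rangle_{\mathscr{H}_E}=(\Delta h)(x)$ from (\ref{enu:D5}), and the dipole relation $\langle v_x,h\rangle_{\mathscr{H}_E}=h(x)-h(o)$ from (\ref{eq:dp1}). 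The defect condition then collapses to $(\Delta h+h)(x)=(\Delta h+h)(o)$ for all $x\in V'$, i.e., $\Delta h+h$ is a constant on $V$.

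Because $\mathscr{H}_E$-vectors are only defined modulo additive constants, I can pick the representative for which this constant is $0$ and call it $f$; then $f\neq 0$ in $\mathscr{H}_E$ iff $h\neq 0$, and $f$ satisfies $\Delta f+f=0$ pointwise on $V$. To pass to formula (\ref{eq:dp3}), substitute $(\Delta^{-1})_{xy}=\langle v_x,v_y\rangle_{\mathscr{H}_E}=v_y(x)-v_y(o)$, so that the function in (\ref{eq:dp3}) takes the form $h(x)=f(x)+g(x)-g(o)$ with $g:=\sum_{y\in V'}f(y)\,v_y$. Applying $\Delta$ in $x$ termwise and using $\Delta v_y=\delta_y-\delta_o$ gives $\Delta g(x)=f(x)$ for $x\in V'$, and hence $\Delta h(x)=\Delta f(x)+f(x)=0$ on $V'$; a matching computation at $x=o$, using Lemma \ref{lem:Delta}(\ref{enu:D7}) to express $\delta_o$ in terms of the neighbour-dipoles, closes the loop. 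For the converse, running exactly the same computation in reverse shows that harmonicity of $h$ forces $\Delta f+f$ to be constant on $V$, which identifies $f$ (up to that constant) as a nontrivial element of $\ker(\Delta^{*}+I)$.

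The main obstacle will be the analytic bookkeeping around the basepoint $o$. Since the sum in (\ref{eq:dp3}) is indexed by $y\in V'$ (with $v_o\equiv 0$), harmonicity of $h$ at $x=o$ produces a separate global constraint on $f$ that must be matched against the pointwise equation $\Delta f+f=0$ by exploiting the residual freedom in choosing the $\mathscr{H}_E$-representative. Convergence of $g=\sum_{y\in V'}f(y)\,v_y$ in $\mathscr{H}_E$, and the justification for interchanging $\Delta$ with the series in (\ref{eq:dp3}), will be handled by invoking the formal matrix-inverse identities (\ref{eq:m13})--(\ref{eq:m14}) together with the reproducing properties established in Lemma \ref{lem:Delta}.
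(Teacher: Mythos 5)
Your reduction is the same as the paper's up to the halfway point: the criterion $\ker(\Delta^{*}+I)=\{0\}$ is exactly condition (\ref{eq:dp4}), and your unpacking of it via $\Delta v_{x}=\delta_{x}-\delta_{o}$, Lemma \ref{lem:Delta}(\ref{enu:D5}) and (\ref{eq:dp1}) yields ``$\Delta f+f$ is constant,'' which after normalization is precisely the paper's equation (\ref{eq:dp5}) written with $\Delta$ instead of $\mathbb{P}$. Where you genuinely diverge is the passage from this equation to harmonicity of (\ref{eq:dp3}): the paper iterates the $\mathbb{P}$-eigenequation to produce the Neumann series (\ref{eq:dp6}), takes $h=\lim_{n}\mathbb{P}^{n}f$ -- which is $\mathbb{P}$-fixed at \emph{every} vertex by construction -- and only then identifies $h$ with (\ref{eq:dp3}) through the formal identity (\ref{eq:dp7}); you instead verify harmonicity of (\ref{eq:dp3}) directly, by applying $\Delta$ termwise and invoking $\Delta v_{y}=\delta_{y}-\delta_{o}$, equivalently the inverse identity (\ref{eq:m13}). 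Your route is more elementary, but it breaks exactly at the point where it departs from the paper's.

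The gap is at the base point $o$. Your termwise computation gives $\Delta g(x)=f(x)$ only for $x\in V'$; at $o$ one gets $\Delta g\left(o\right)=-\sum_{y\in V'}f\left(y\right)$, hence
\begin{equation*}
\left(\Delta h\right)\left(o\right)=\left(\Delta f\right)\left(o\right)-\sum_{y\in V'}f\left(y\right)=-\sum_{y\in V}f\left(y\right),
\end{equation*}
using $\Delta f+f=0$. So harmonicity of $h$ at $o$ requires the global identity $\sum_{y\in V}f\left(y\right)=0$, and neither of your proposed fixes produces it. The ``residual freedom'' in the representative is already spent: replacing $f$ by $f+a$ gives $\left(\Delta h\right)\left(x\right)=a$ for all $x\in V'$, so harmonicity on $V'$ forces $a=0$, leaving no parameter to absorb the constraint at $o$. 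And Lemma \ref{lem:Delta}(\ref{enu:D7}) does not help: it is stated for $x\in V'$, and the corresponding expansion $\delta_{o}=-\sum_{y\sim o}c_{oy}v_{y}$ (valid in $\mathscr{H}_{E}$) merely recomputes $\left\langle \delta_{o},h\right\rangle _{\mathscr{H}_{E}}$ and returns the same number $-\sum_{V}f$. The identity $\sum_{V}f=0$ is a genuine extra assertion -- it says the boundary flux $\langle\chi_{F},f\rangle_{\mathscr{H}_{E}}$ vanishes along an exhaustion $F\uparrow V$, equivalently that $\sum_{x\in F}\delta_{x}\rightarrow0$ weakly in $\mathscr{H}_{E}$ -- and it cannot be extracted from the defect equation, whose entire content (``$\Delta f+f$ constant'') you have already consumed. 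The same obstruction reappears in your converse: harmonicity of $h$ gives $\Delta f+f=0$ on $V'$, but the defect condition additionally needs $\left(\Delta f+f\right)\left(o\right)=0$, i.e.\ again $\sum_{V}f=0$. This is why the paper's Markov-iteration device is not cosmetic: a pointwise limit of $\mathbb{P}$-iterates is automatically harmonic at $o$ as well, so that direction never meets the flux condition. To repair your argument you must either prove $\sum_{y\in V}f\left(y\right)=0$ for defect vectors (an absolute-convergence/flux statement requiring justification), or fall back on the iteration scheme for the forward implication.
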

\begin{proof}
By general operator theory (see \cite{DS88b}), the essential selfadjointness
assertion holds iff the following implication holds:
\begin{equation}
\left[f\in\mathscr{H}_{E},\:\mbox{and}\:\left\langle \varphi+\Delta\varphi,f\right\rangle _{\mathscr{H}_{E}}=0,\;\forall\varphi\in\mathscr{D}_{E}\right]\Longrightarrow\big[f=0\big].\label{eq:dp4}
\end{equation}
Taking $\varphi=v_{x}$, and modulo an additive constant, we see that
a possible solution $f\in\mathscr{H}_{E}$ to (\ref{eq:dp4}) will
have the form, setting: 
\begin{equation}
\left(\mathbb{P}f\right)\left(x\right)=\left(1+\frac{1}{c\left(x\right)}\right)f\left(x\right),\quad\forall x\in V',\label{eq:dp5}
\end{equation}
where $\left(\mathbb{P}f\right)\left(x\right)=\sum_{y\sim x}p_{x,y}f\left(y\right)$,
$p_{x,y}=\dfrac{c_{x,y}}{c\left(x\right)}$. 

An iteration of (\ref{eq:dp5}) yields 
\begin{equation}
\left(\mathbb{P}^{n+1}f\right)\left(x\right)=f\left(x\right)+\sum_{k=0}^{n}\mathbb{P}^{k}\left(\frac{f}{c}\right)\left(x\right).\label{eq:dp6}
\end{equation}
But we have pointwise convergence on the RHS in (\ref{eq:dp6}), and
\[
\left(1-\mathbb{P}\right)^{-1}=\left(\frac{1}{c}\Delta\right)^{-1},\;\mbox{so}
\]
\begin{align}
\left(1-\mathbb{P}\right)^{-1}\left(\frac{f}{c}\right)\left(x\right) & =\Delta{}^{-1}\left(\text{diag}\left(c\right)\right)\left(\frac{f}{c}\right)\left(x\right)\nonumber \\
 & =\left(\Delta^{-1}f\right)\left(x\right)=\sum_{y}\left(\Delta^{-1}\right)_{xy}f\left(y\right).\label{eq:dp7}
\end{align}
So the LHS in (\ref{eq:dp6}) must converge pointwise; but it is clear
that $h=\lim_{n}\mathbb{P}^{n}f$ is harmonic. 

Finally, it is clear that every solution $f\in\mathscr{H}_{E}$ to
(\ref{eq:dp3}) will satisfy (\ref{eq:dp4}); which in turn is the
equation which decides non-essential selfadjointness, by general theory. \end{proof}
\begin{rem}
We introduce the Markov measure $\mu^{\left(Markov\right)}$ on the
space $\Omega$ of all $G=\left(V,E\right)$-paths, and the Markov-walk
process
\begin{equation}
\pi_{n}\left(\omega\right):=\omega_{n},\quad\forall\omega\in\Omega,\: n\in\mathbb{N}_{0},\label{eq:mk1}
\end{equation}
where $\omega=\left(\omega_{0},\omega_{1},\omega_{2},\cdots\right)$,
$\omega_{j}\in V$, $\left(\omega_{j}\omega_{j+1}\right)\in E$, $\forall j\in\mathbb{N}_{0}$.
Then the matrix product $\mathbb{P}^{k}$ in (\ref{eq:dp6}) is 
\begin{equation}
\mbox{Prob}\left(\left\{ \pi_{m+k}=y\:\big|\:\pi_{m}=x\right\} \right)=(\mathbb{P}^{k})_{x,y}.\label{eq:mk2}
\end{equation}
We shall return to this Markov-process in Section \ref{sec:path}
below.
\end{rem}

\section{From conductance to current flow}

Let $G=\left(V,E,c\right)$ be an infinite weighted graph (connected,
see (\ref{enu:net4}) before Definition \ref{def:cond}). Here, $V=$
vertex-set, $E=$ edges, and $c:E\rightarrow\mathbb{R}_{+}$ is a
fixed conductance function, so that $c=\left(c_{xy}\right)$, $\left(xy\right)\in E$.
Let $\mathscr{H}_{E}$ be the corresponding energy-Hilbert space (see
(\ref{eq:Einn})-(\ref{eq:Enorm})).

Set the current flow $I_{\left(xy\right)}:=\partial w$, where 
\begin{align}
I_{\left(xy\right)} & =\left(\partial w\right)\left(x,y\right)=c_{xy}\left(w\left(x\right)-w\left(y\right)\right),\quad\forall\left(xy\right)\in E,\; w\in\mathscr{H}_{E}.\label{eq:cr0}
\end{align}
And set
\begin{align}
Dissp & =\left\{ \partial w\;\big|\; w\in\mathscr{H}_{E},\;\left\Vert \partial w\right\Vert _{Diss}^{2}:=\sum\sum\frac{1}{c_{xy}}I_{\left(xy\right)}^{2}<\infty\right\} \label{eq:cr1}
\end{align}
as a weighted $l^{2}$-space on $E$, where $1/c_{xy}=$ resistance. 

On edges $\left(u,v\right)\in E$ from $x$ to $y$ in $V$, the current
$I_{\left(u,v\right)}$ is
\[
I_{uv}=c_{uv}\left(f\left(u\right)-f\left(v\right)\right)
\]
where $f$ denotes a voltage-distribution. See Fig \ref{fig:current}.

\begin{figure}[H]
\includegraphics[width=0.6\textwidth]{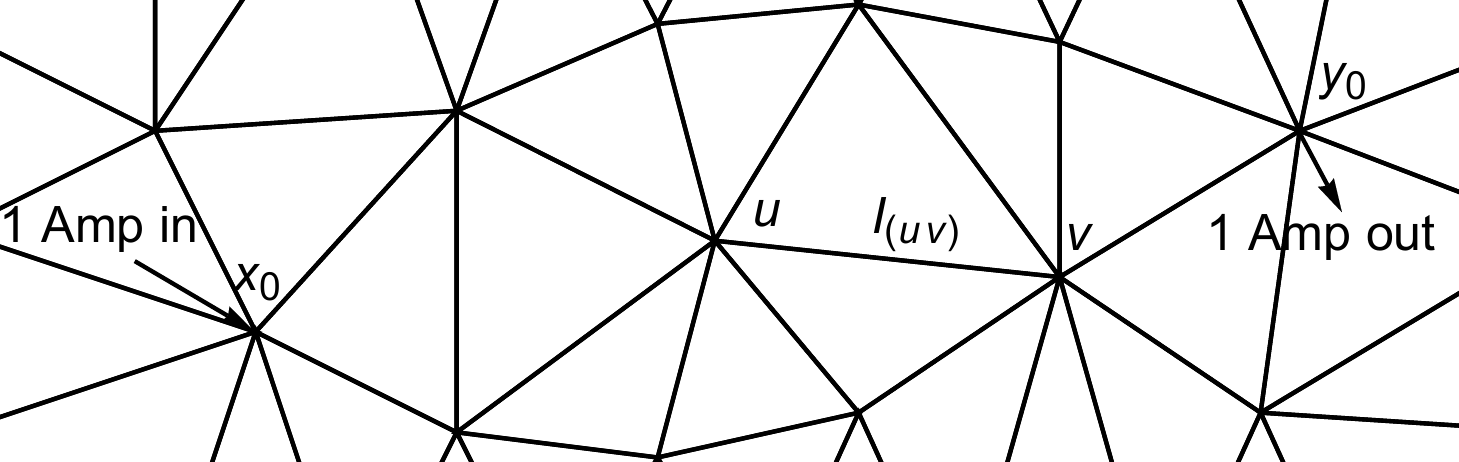}

\protect\caption{\label{fig:current}Current flows from vertex $x_{0}$ to vertex $y_{0}$;
with a given conductance function $c$. The convex set $W_{x_{0}y_{0}}$.}
\end{figure}

\begin{lem}
\label{lem:current}The operator $\partial:\mathscr{H}_{E}\rightarrow Dissp$
is isometric.\end{lem}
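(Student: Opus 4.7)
The plan is to prove $\|\partial w\|_{Diss} = \|w\|_{\mathscr{H}_E}$ for all $w\in\mathscr{H}_E$ by a direct computation from the definitions. Since the claim is that $\partial$ is an isometry (of real/complex Hilbert spaces), by polarization it suffices to check that norms are preserved. First I would unpack $I_{(xy)} = (\partial w)(x,y) = c_{xy}\bigl(w(x)-w(y)\bigr)$ from \eqref{eq:cr0}, and then plug this into the weighted $l^2$-norm \eqref{eq:cr1} defining $Dissp$.

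The core identity is the pointwise cancellation
\[
\frac{1}{c_{xy}}\,I_{(xy)}^{2} \;=\; \frac{1}{c_{xy}}\,\bigl(c_{xy}(w(x)-w(y))\bigr)^{2} \;=\; c_{xy}\bigl(w(x)-w(y)\bigr)^{2},
\]
valid on every edge $(xy)\in E$ (where $c_{xy}>0$ by the standing hypothesis on the conductance). Summing this identity over the edge-set and comparing with the definition \eqref{eq:Enorm} of $\|\cdot\|_{\mathscr{H}_E}^{2}$ directly yields the equality of squared norms, once one reconciles the summation conventions: both $\|\cdot\|_{\mathscr{H}_E}^{2}$ and $\|\partial(\cdot)\|_{Diss}^{2}$ use an ordered double sum over $E$, with the $\frac{1}{2}$ in \eqref{eq:Enorm} compensating for the two orientations of each edge, and the analogous convention is implicit in \eqref{eq:cr1} so that each unoriented edge contributes once. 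Because the computation is pointwise and nonnegative, convergence on one side forces convergence on the other, so the map $\partial$ is automatically well-defined from $\mathscr{H}_E$ into $Dissp$, and in fact $w\in\mathscr{H}_E$ if and only if $\partial w\in Dissp$ in the sense of \eqref{eq:cr1}.

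The only subtle point I expect is bookkeeping around orientation: one must make explicit whether $\sum\sum$ in \eqref{eq:cr1} ranges over ordered or unordered edges, and insert the appropriate factor of $\tfrac{1}{2}$ so that the identity $\|\partial w\|_{Diss}^{2}=\|w\|_{\mathscr{H}_E}^{2}$ holds on the nose rather than up to a factor of $2$. Once this convention is fixed consistently with \eqref{eq:Enorm}, the isometry statement is immediate. No nontrivial harmonic analysis, operator theory, or boundary argument is needed at this stage; the lemma is really the observation that $\partial$ is the weighted coboundary attached to the inner product \eqref{eq:Einn}, and hence tautologically isometric. This clean identification will later be useful because it lets one transfer orthogonal decompositions of $\mathscr{H}_E$ (for instance, the splitting into finitely-supported, dipole, and harmonic pieces) into current-flow language on $E$.
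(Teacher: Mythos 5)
Your proposal is correct and is essentially the paper's own proof: both rest on the pointwise cancellation $\frac{1}{c_{xy}}I_{(xy)}^{2}=c_{xy}\left|w\left(x\right)-w\left(y\right)\right|^{2}$ on each edge, summed over $E$, identifying the dissipation norm of $\partial w$ with the energy norm of $w$. Your explicit remarks on the ordered-sum/factor-of-$\tfrac{1}{2}$ convention and on polarization are just careful bookkeeping of what the paper leaves implicit.
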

\begin{proof}
One checks that 
\begin{align}
\left\Vert w\right\Vert _{\mathscr{H}_{E}}^{2} & =\sum\sum c_{xy}\left|w\left(x\right)-w\left(y\right)\right|^{2}\quad(\text{energy})\label{eq:f1a}\\
 & =\sum\sum\frac{1}{c_{xy}}I_{\left(xy\right)}^{2}\quad(\text{dissipation})\label{eq:f12}
\end{align}
where $I_{xy}=\left(\partial w\right)_{xy}=c_{xy}\left(w\left(x\right)-w\left(y\right)\right)$,
and $1/c_{xy}=$ resistance on the edge $\left(xy\right)$, see (\ref{eq:cr0});
and the lemma follows.\end{proof}
\begin{defn}
Set $dist\left(x_{0},y_{0}\right)=$ distance $x_{0}\rightarrow y_{0}$
= voltage drop from $x_{0}$ to $y_{0}$ when current $I$ satisfies
$I=1$ at $x_{0}$ ``in'' and current $I=-1$ at $y_{0}$ ``out.''\end{defn}
\begin{thm}
There is a unique current flow s.t. 
\begin{align}
dist\left(x_{0},y_{0}\right) & =\inf\left\{ \left\Vert I\right\Vert _{Diss}^{2},\; I\big|_{\left\{ x_{0},y_{0}\right\} }=\text{1 Amp in, and 1 out}\right\} \label{eq:cr2}
\end{align}
\end{thm}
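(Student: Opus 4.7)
The plan is to exhibit the minimizing current explicitly as the gradient of the dipole vector $v_{x_{0},y_{0}}\in\mathscr{H}_{E}$ furnished by Corollary~\ref{lem:dipole}, and then to establish both the minimality and the uniqueness in one stroke by a Pythagorean/orthogonal-projection argument inside the dissipation Hilbert space.

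First, I set $v:=v_{x_{0},y_{0}}$ and $I^{*}:=\partial v$. From $\Delta v=\delta_{x_{0}}-\delta_{y_{0}}$ (cf.~(\ref{eq:dd2})) one reads off Kirchhoff's node law: $\sum_{y\sim x}I^{*}_{xy}=(\delta_{x_{0}}-\delta_{y_{0}})(x)$ for every $x\in V$, so $I^{*}$ has exactly $1$ amp entering at $x_{0}$ and $1$ amp exiting at $y_{0}$. Substituting $u=v$ in (\ref{eq:dipole}) gives
\begin{equation*}
\|v\|_{\mathscr{H}_{E}}^{2}\;=\;v(x_{0})-v(y_{0}),
\end{equation*}
which is precisely the voltage drop $dist(x_{0},y_{0})$; the isometry of Lemma~\ref{lem:current} then yields $\|I^{*}\|_{Diss}^{2}=\|v\|_{\mathscr{H}_{E}}^{2}=dist(x_{0},y_{0})$. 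Hence $I^{*}$ is admissible in (\ref{eq:cr2}) and achieves the asserted value.

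For minimality, let $I$ be any admissible current with $\|I\|_{Diss}<\infty$ and set $J:=I-I^{*}$. Then $J$ is antisymmetric, has finite dissipation, and obeys the homogeneous Kirchhoff condition $\sum_{y\sim x}J_{xy}=0$ at every vertex. A Fubini-style rearrangement, exploiting the antisymmetry of $J$ together with the symmetry $c_{xy}=c_{yx}$, gives
\begin{equation*}
\langle I^{*},J\rangle_{Diss}\;=\;\sum_{x\in V}v(x)\sum_{y\sim x}J_{xy}\;=\;0.
\end{equation*}
Therefore $\|I\|_{Diss}^{2}=\|I^{*}\|_{Diss}^{2}+\|J\|_{Diss}^{2}\geq\|I^{*}\|_{Diss}^{2}$ with equality iff $J\equiv0$, which proves simultaneously that the infimum in (\ref{eq:cr2}) equals $dist(x_{0},y_{0})$ and that it is attained by the unique admissible current $I^{*}=\partial v_{x_{0},y_{0}}$.

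The main technical obstacle lies in justifying the rearrangement that produced the vanishing of $\langle I^{*},J\rangle_{Diss}$: this is a genuine interchange-of-summation issue, since $v$ need not decay along $V$. The standard remedy is to verify the identity first on the dense subspace of finitely-supported cycle currents $J$ (where only finitely many terms contribute), and then to extend by continuity, using the Cauchy--Schwarz estimate $|\langle I^{*},J\rangle_{Diss}|\leq\|I^{*}\|_{Diss}\|J\|_{Diss}$. This is also the natural place to make precise the ambient Hilbert space inside which the minimization is performed, namely antisymmetric real functions on $E$ with finite dissipation; once that formal framework is in place, the conclusion is a routine orthogonal-projection statement.
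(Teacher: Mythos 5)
Your strategy is the classical Thomson-principle argument: exhibit $I^{*}=\partial v_{x_{0}y_{0}}$ as an admissible flow whose dissipation equals the voltage drop (this half of your proof is fine), then get minimality and uniqueness from a Pythagorean identity. This is genuinely different from the paper's proof, which avoids any orthogonality computation: there the admissible set $W_{x_{0}y_{0}}\subset Dissp$ is the image under the isometry $\partial$ of Lemma \ref{lem:current} of the closed affine set $\left\{ w\in\mathscr{H}_{E}\::\:\Delta w=\delta_{x_{0}}-\delta_{y_{0}}\right\} $, hence closed and convex, and existence and uniqueness of the minimizer follow from the Hilbert-space minimum-norm theorem. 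The difference is not cosmetic, because your key step --- $\left\langle I^{*},J\right\rangle _{Diss}=0$ for every finite-dissipation $J$ obeying the homogeneous Kirchhoff law --- is false on exactly the class of graphs this paper is about. Take $J=\partial h$ with $h\in Harm=\left\{ h\in\mathscr{H}_{E}\::\:\Delta h=0\right\} $ nonconstant. This $J$ is antisymmetric, has finite dissipation $\left\Vert h\right\Vert _{\mathscr{H}_{E}}^{2}$, and is divergence-free at every vertex; yet the isometry of $\partial$ together with the reproducing property (\ref{eq:dipole}) gives
\begin{equation*}
\left\langle I^{*},J\right\rangle _{Diss}=\left\langle v_{x_{0}y_{0}},h\right\rangle _{\mathscr{H}_{E}}=h\left(x_{0}\right)-h\left(y_{0}\right),
\end{equation*}
which is nonzero as soon as some finite-energy harmonic function separates $x_{0}$ and $y_{0}$. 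Networks with an abundance of such functions (e.g.\ the tree examples, where $B$ is a Cantor set and the Poisson formula (\ref{eq:p5}) produces nonconstant $h\in Harm$ from continuous boundary data) are the paper's main objects, so this is not a degenerate case that can be waved away.

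For the same reason your proposed repair cannot succeed. Finitely supported cycle currents are indeed orthogonal to $I^{*}$ (every gradient is orthogonal to every cycle), but their closed span is \emph{not} dense in the space $Z$ of divergence-free finite-dissipation flows; rather one has $Z=\overline{\mathrm{span}}\left\{ \text{finite cycles}\right\} \oplus\partial\left(Harm\right)$, in parallel with the Royden decomposition (\ref{eq:cd1}). Thus the failure of your Fubini-style rearrangement is not a summability technicality to be handled by a density argument: the discarded term is a genuine flux-at-infinity boundary term, equal precisely to $h\left(x_{0}\right)-h\left(y_{0}\right)$. Carried out correctly, the projection argument identifies the minimizer as $\partial\left(Pv_{x_{0}y_{0}}\right)$ and the minimum as $\left\Vert Pv_{x_{0}y_{0}}\right\Vert _{\mathscr{H}_{E}}^{2}$, where $P$ is the projection onto $\overline{\mathrm{span}}\left\{ \delta_{x}\right\} $ (the second summand in (\ref{eq:cd1})); this ``wired'' value is strictly smaller than $\left\Vert v_{x_{0}y_{0}}\right\Vert _{\mathscr{H}_{E}}^{2}$ whenever some $h\in Harm$ separates $x_{0}$ and $y_{0}$. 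To prove the statement in the intended form one should therefore do what the paper does: work inside the class $Dissp$ of (\ref{eq:cr1}) (currents induced by potentials in $\mathscr{H}_{E}$), obtain existence and uniqueness from closedness and convexity of $W_{x_{0}y_{0}}$, and treat the identification of the minimizer with $\partial v_{x_{0}y_{0}}$ --- which is exactly the assertion that the harmonic component does not lower the dissipation, i.e.\ the point your orthogonality claim silently assumes --- as a separate step; the paper defers that step to \cite{JP10,JP11}.
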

\begin{proof}
Recall that by Lemma \ref{lem:dipole}, $\exists!v_{xy}$ s.t. 
\begin{align}
\left\langle v_{xy},f\right\rangle _{\mathscr{H}_{E}} & =f\left(x\right)-f\left(y\right),\quad\forall\left(x,y\right)\in V\times V,\;\forall f\in\mathscr{H}_{E}.\label{eq:cr3}
\end{align}
Set $I=\partial v_{xy}$, then 
\begin{align}
d\left(x_{0},y_{0}\right)=\inf\left\Vert I\right\Vert _{Diss}^{2} & =\left\Vert \partial v_{x_{0}y_{0}}\right\Vert _{Diss}^{2}\nonumber \\
 & =\left\Vert v_{x_{0}y_{0}}\right\Vert _{\mathscr{H}_{E}}^{2}\left(=\text{resistance distance}\right);\label{eq:cr4}
\end{align}
i.e., the infimum in (\ref{eq:cr2}) is obtained at the flow $I=\partial v_{x_{0}y_{0}}$,
see (\ref{eq:cr3})-(\ref{eq:cr4}). For a proof, see \cite{JP10,JP11}.

The infimum in (\ref{eq:cr2}) and (\ref{eq:cr4}) is justified with
the following Hilbert space geometry applied to the energy-Hilbert
space $\mathscr{H}_{E}$: 

The infimum in (\ref{eq:cr2}) is attained when $I_{0}=\partial v_{x_{0}y_{0}}$.
We use that $I_{0}$ is the vector in the convex set $W_{x_{0}y_{0}}$
of minimum norm. Since $\partial$ from Lemma \ref{lem:current} is
isometric, we see that $W_{x_{0}y_{0}}$ is both closed and convex.
From Hilbert space geometry, see e.g. \cite{MR1157815}, we know that
$W_{x_{0}y_{0}}$ contains a vector of smallest norm. From the definition
of $W_{x_{0}y_{0}}$ (see e.g., Fig \ref{fig:current}), we conclude
that the minimum must be $I_{0}=\partial v_{x_{0}y_{0}}$; see also
\cite{JP11}.\end{proof}
\begin{rem}
The function $v_{x_{0}y_{0}}$ in (\ref{eq:cr4}) is called a \emph{dipole},
and it satisfies 
\begin{equation}
\Delta v_{x_{0}y_{0}}=\delta_{x_{0}}-\delta_{y_{0}}
\end{equation}
where $\Delta$ is the Laplacian from (\ref{eq:lap}).
\end{rem}
Below, we offer seven different, but equivalent, formulas for the
resistance metric $d_{res}\left(x,y\right)$:
\begin{thm}[\cite{JP11}]
 \label{thm:metric}Let $V,E,c,\Delta$, and $d_{res}$ be as above;
let $x,y\in V$, and let $W_{xy}$ be the set from Fig \ref{fig:current}.
Then
\begin{eqnarray*}
d_{res}\left(x,y\right) & = & v\left(x\right)-v\left(y\right)\;\mbox{when}\; v=v_{x,y}\\
 & = & \left\Vert v_{x,y}\right\Vert _{\mathscr{H}_{E}}^{2}\\
 & = & \min\left\{ \left\Vert I\right\Vert _{Dissp}^{2}\::\: I\in W_{x,y}\right\} \\
 & = & \left\Vert w\right\Vert _{\mathscr{H}_{E}}^{2}\;\mbox{when}\;\Delta w=\delta_{x}-\delta_{y}\\
 & = & \frac{1}{\min\left\{ \left\Vert w\right\Vert _{\mathscr{H}_{E}}^{2}\::\: w\in\mathscr{H}_{E},\:\left|w\left(x\right)-w\left(y\right)\right|=1\right\} }\\
 & = & \inf\left\{ K\::\:\left|w\left(x\right)-w\left(y\right)\right|^{2}\leq K\left\Vert w\right\Vert _{\mathscr{H}_{E}}^{2},\: w\in\mathscr{H}_{E}\right\} \\
 & = & \sup\left\{ \left|w\left(x\right)-w\left(y\right)\right|^{2}\::\: w\in\mathscr{H}_{E},\:\left\Vert w\right\Vert _{\mathscr{H}_{E}}\leq1\right\} .
\end{eqnarray*}
\end{thm}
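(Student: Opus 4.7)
The plan is to show that all seven expressions equal $\|v_{xy}\|_{\mathscr{H}_E}^2$, where $v_{xy}$ is the dipole from Corollary~\ref{lem:dipole}, and then to invoke the earlier identification of $d_{res}(x,y)$ with this squared norm (equation (\ref{eq:cr4})). Formulas (1) and (2) coincide by substituting $u = v_{xy}$ into the defining identity (\ref{eq:dipole}):
$$\|v_{xy}\|_{\mathscr{H}_E}^2 = \langle v_{xy}, v_{xy}\rangle_{\mathscr{H}_E} = v_{xy}(x) - v_{xy}(y).$$
Formula (3) is exactly the content of the preceding theorem combined with the isometry of $\partial$ from Lemma~\ref{lem:current}: the convex set $W_{x,y}$ is closed in the Hilbert space of dissipation, its element of minimal norm is $I_0 = \partial v_{xy}$, and the isometry $\partial$ transfers that norm back to $\mathscr{H}_E$.

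For formulas (6) and (7) the key observation is that the linear functional $L_{xy}(w) := w(x) - w(y)$ is, by the very definition (\ref{eq:dipole}) of the dipole, bounded on $\mathscr{H}_E$ with Riesz-representing vector $v_{xy}$; hence its operator norm equals $\|v_{xy}\|_{\mathscr{H}_E}$. Formula (7) is this norm squared, written as the supremum over the unit ball. Formula (6), the smallest admissible $K$ in $|w(x)-w(y)|^2 \leq K\|w\|_{\mathscr{H}_E}^2$, equals $\sup_w |w(x)-w(y)|^2/\|w\|_{\mathscr{H}_E}^2$ and thus coincides with (7) by homogeneity. Formula (5) is the same duality in disguise: for any $w$ with $|w(x)-w(y)|=1$, applying Cauchy--Schwarz to $\langle v_{xy}, w\rangle = w(x)-w(y)$ yields $\|w\|_{\mathscr{H}_E}^2 \geq 1/\|v_{xy}\|_{\mathscr{H}_E}^2$, with equality at $w = v_{xy}/\|v_{xy}\|_{\mathscr{H}_E}^2$ (which indeed satisfies $|w(x)-w(y)|=1$); inverting returns $\|v_{xy}\|_{\mathscr{H}_E}^2$.

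The one delicate item is formula (4). The identity $\Delta v_{xy} = \delta_x - \delta_y$ (Lemma before the proof) gives one solution, but $\Delta w = \delta_x - \delta_y$ has the full affine family $\{v_{xy} + h : h \in \mathscr{H}_E,\ \Delta h = 0\}$ of solutions, and by Theorem~\ref{thm:esa} the harmonic subspace can be nontrivial. To make formula (4) unambiguous one interprets it as the norm of the canonical dipole, equivalently as the minimum-norm solution. The two agree via the Royden-type orthogonal decomposition $\mathscr{H}_E = \overline{\mathrm{span}\{\delta_x\}}^{\mathscr{H}_E} \oplus \mathrm{Harm}$, under which $v_{xy}$ is exactly the component of any solution lying in the first summand; Pythagoras then gives $\|w\|_{\mathscr{H}_E}^2 = \|v_{xy}\|_{\mathscr{H}_E}^2 + \|h\|_{\mathscr{H}_E}^2 \geq \|v_{xy}\|_{\mathscr{H}_E}^2$, with equality iff $h=0$. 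This orthogonality is the main technical obstacle; once it is in hand, formula (4) joins the others, and collecting the identifications finishes the proof.
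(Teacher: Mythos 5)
The paper itself gives no proof of Theorem \ref{thm:metric} (it is quoted from \cite{JP11}), so your argument must stand on its own. Your treatment of formulas (1), (2), (5), (6), (7) is correct: they are all manifestations of the Riesz representation of the bounded functional $w\mapsto w\left(x\right)-w\left(y\right)$, whose representing vector is $v_{xy}$ by (\ref{eq:dipole}), combined with Cauchy--Schwarz and homogeneity; and deferring (3) to the minimum-norm element of the closed convex set $W_{x,y}$ via the isometry of $\partial$ (Lemma \ref{lem:current}) is exactly what the paper does in the theorem preceding this one.

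The step you yourself flagged as the delicate one, formula (4), is however genuinely wrong as you argue it. You claim that in the Royden decomposition (\ref{eq:cd1}) the dipole $v_{xy}$ is the $\overline{span}\left\{ \delta_{x}\right\}$-component of every finite-energy solution of $\Delta w=\delta_{x}-\delta_{y}$; equivalently, that $v_{xy}\perp Harm$. But the defining identity (\ref{eq:dipole}) applies to \emph{all} of $\mathscr{H}_{E}$, in particular $\left\langle v_{xy},h\right\rangle _{\mathscr{H}_{E}}=h\left(x\right)-h\left(y\right)$ for every $h\in Harm$; so $v_{xy}\perp Harm$ holds only when every finite-energy harmonic function takes equal values at $x$ and $y$. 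The networks this paper cares about violate that (on the binary tree of Fig \ref{fig:bt}, and in the setting of Theorem \ref{thm:poisson}, $Harm$ is large and separates vertices). In general the cross term in your Pythagoras computation is $2\Re\left(h\left(x\right)-h\left(y\right)\right)\neq0$, the minimum of $\left\Vert v_{xy}+h\right\Vert _{\mathscr{H}_{E}}^{2}$ over $h\in Harm$ is attained at $h=-P_{Harm}v_{xy}$ (where $P_{Harm}$ is the orthogonal projection onto $Harm$), and its value $\left\Vert v_{xy}\right\Vert _{\mathscr{H}_{E}}^{2}-\left\Vert P_{Harm}v_{xy}\right\Vert _{\mathscr{H}_{E}}^{2}$ is \emph{strictly smaller} than $d_{res}\left(x,y\right)$ whenever $P_{Harm}v_{xy}\neq0$. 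This is precisely the known gap between the free resistance $\left\Vert v_{xy}\right\Vert _{\mathscr{H}_{E}}^{2}$ and the wired resistance in \cite{JP10,JP11}; so formula (4) cannot be interpreted as a minimum over all solutions and then rescued by orthogonality. The reading that keeps the theorem consistent is that (4) is evaluated at the particular solution $w=v_{xy}$ supplied by the Riesz representation, which solves the dipole equation by (\ref{eq:dd2}) --- i.e., (4) is a restatement of (2), with the non-uniqueness of solutions acknowledged as in the remark following (\ref{eq:dd2}). (Be aware that the same caveat silently affects (3): the minimum-dissipation element among all potential-induced unit flows is $\partial\left(v_{xy}-P_{Harm}v_{xy}\right)$, not $\partial v_{xy}$; you are entitled to quote the paper's preceding theorem there, but the issue you tried to settle in (4) is present in it too.)
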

\begin{example}[see Fig \ref{fig:res}]
\[
d_{res}\left(x,y\right)=r_{1}+\frac{1}{\frac{1}{r_{2}}+\frac{1}{r_{3}}}
\]

\end{example}
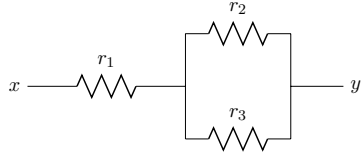
\begin{figure}
\scalebox{0.7}{
\begin{circuitikz}
\draw 
node[left]{$x$} (0,0) to [R=$r_{1}$] (3,0)
(3,0) to (3,1) to [R=$r_{2}$] (5,1) to (5,-1)

(3,0) to (3,-1) to [R=$r_{3}$] (5,-1)

(5,0) to (6,0) node[right]{$y$}
;
\end{circuitikz}
}

\protect\caption{\label{fig:res}Example of resistor configuration in a network: configuration
of three resistors, having values $r_{1},r_{2},r_{3}$ Ohm.}

\end{figure}

\section{The metric boundary}
\begin{defn}
By $M$ we mean the set of equivalence classes of sequences $\left(x_{i}\right)\subset V$
of vertices s.t. 
\begin{equation}
\lim_{i,j\rightarrow\infty}d\left(x_{i},x_{j}\right)=0\;\text{(Cauchy); and}\label{eq:md1}
\end{equation}
\begin{equation}
\left(x_{i}\right)\sim\left(y_{i}\right)\underset{(\text{Def})}{\Longleftrightarrow}\lim_{i\rightarrow\infty}d\left(x_{i},y_{i}\right)=0.\label{eq:md2}
\end{equation}

\end{defn}
The vertex-set $V$ is identified with a subset of $M$ via the mapping
$\gamma:V\rightarrow M$, 
\begin{equation}
V\ni x\longmapsto\gamma\left(x\right)=\mbox{class}\left(x,x,x,\cdots\right).\label{eq:md3}
\end{equation}
Hence $b\in M\backslash V\text{(=bdd \ensuremath{V})}$ iff $b=\left(y_{i}\right)\in M$
satisfies the following: 

\begin{equation}
\forall x\in V,\:\exists\varepsilon\in\mathbb{R}_{+},\:\exists\left(y_{i_{k}}\right)\subset\left(y_{i}\right),\;\mbox{s.t.}\; d\left(x,y_{i_{k}}\right)\geq\varepsilon,\quad\forall k\in\mathbb{N}.\label{eq:md4}
\end{equation}
 Note that the assertion in (\ref{eq:md4}) states that: 
\begin{equation}
d\left(\gamma\left(x\right),b\right)>0,\quad\forall x\in V.\label{eq:md5}
\end{equation}

We now show that if $d:=d_{res}$ is \emph{bounded}, then every function
$f\in\mathscr{H}_{E}$ extends by closure to $M$: If $b\in M$, and
$\left\{ x_{i}\right\} \subset V$, are such that $\lim_{i\rightarrow\infty}d\left(x_{i},b\right)=0$,
we set 
\begin{equation}
\widetilde{f}\left(b\right)=\lim_{i\rightarrow\infty}f\left(x_{i}\right).\label{eq:m1}
\end{equation}
It is then immediate that 
\begin{equation}
\big|\widetilde{f}\left(b\right)-\widetilde{f}\left(b'\right)\big|^{2}\leq d\left(b,b'\right)\left\Vert f\right\Vert _{\mathscr{H}_{E}}^{2}.\label{eq:m2}
\end{equation}
 
\begin{thm}
\label{thm:algebra}If the resistance metric $d=d_{res}$ is bounded
on $V\times V$, then 
\begin{align}
\mathscr{H}_{E} & \subset l^{\infty}\left(V\right),\;\mbox{and}\quad\widetilde{\mathscr{H}}_{E}\subseteq C\left(M\right);\label{eq:gg45}
\end{align}
i.e., every energy function $w$ on $V$ is bounded, and $\mathscr{H}_{E}$
is an algebra under pointwise product. \end{thm}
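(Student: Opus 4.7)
The plan is to use the one-sided $\frac{1}{2}$-Lipschitz estimate that already falls out of Theorem~\ref{thm:metric}, namely
\[
|w(x)-w(y)|^{2}\;\leq\;d_{res}(x,y)\,\|w\|_{\mathscr{H}_{E}}^{2}\qquad \forall w\in\mathscr{H}_{E},\ \forall x,y\in V,
\]
which is just a rephrasing of the last line of Theorem~\ref{thm:metric}. Once this inequality is in hand, all three assertions of Theorem~\ref{thm:algebra} are more or less immediate; the real content is the hypothesis $\sup_{x,y}d_{res}(x,y)=:D<\infty$.

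First I would fix the base point $o\in V$ and, by the freedom of adding a constant (elements of $\mathscr{H}_{E}$ are only determined modulo constants), pick the representative of $w$ with $w(o)=0$. Applying the Lipschitz bound with $y=o$ gives
\[
|w(x)|^{2}\;\leq\;d_{res}(x,o)\,\|w\|_{\mathscr{H}_{E}}^{2}\;\leq\;D\,\|w\|_{\mathscr{H}_{E}}^{2},
\]
so $\|w\|_{l^{\infty}(V)}\leq\sqrt{D}\,\|w\|_{\mathscr{H}_{E}}$, which is the first inclusion in \eqref{eq:gg45}. Next, the same Lipschitz estimate says $w$ is uniformly $\frac{1}{2}$-Hölder on $(V,d)$, hence extends uniquely to a uniformly continuous function $\widetilde w$ on the metric completion $M$; the extension is already written down in \eqref{eq:m1}--\eqref{eq:m2}, so all that is needed is to note continuity of $\widetilde w$ on all of $M$, giving $\widetilde{\mathscr{H}}_{E}\subseteq C(M)$.

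For the algebra property I would argue by a discrete Leibniz rule. Given $u,v\in\mathscr{H}_{E}$, split
\[
u(x)v(x)-u(y)v(y)=u(x)\bigl(v(x)-v(y)\bigr)+v(y)\bigl(u(x)-u(y)\bigr),
\]
square, apply the elementary inequality $|a+b|^{2}\leq 2|a|^{2}+2|b|^{2}$, multiply by $c_{xy}$, and sum over $E$. Using the $l^{\infty}$-bounds just established and the definition \eqref{eq:Enorm}, this produces
\[
\|uv\|_{\mathscr{H}_{E}}^{2}\;\leq\;2\|u\|_{l^{\infty}}^{2}\,\|v\|_{\mathscr{H}_{E}}^{2}+2\|v\|_{l^{\infty}}^{2}\,\|u\|_{\mathscr{H}_{E}}^{2}\;<\;\infty,
\]
so $uv\in\mathscr{H}_{E}$ and $\mathscr{H}_{E}$ is closed under pointwise multiplication.

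I do not expect any serious obstacle: the only bookkeeping issue is the constants-modulo-constants ambiguity in $\mathscr{H}_{E}$, which is neutralized simply by normalizing representatives at the base point $o$. The boundedness hypothesis $D<\infty$ is used only once, in the first step, but that single use is what converts the intrinsic $\frac{1}{2}$-Lipschitz behavior of energy functions on $(V,d)$ into genuine $l^{\infty}$-boundedness and thereby powers both the $C(M)$-extension and the Leibniz estimate.
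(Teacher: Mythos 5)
Your proposal is correct and follows essentially the same route as the paper: the Lipschitz bound $|w(x)-w(y)|^{2}\leq d_{res}(x,y)\|w\|_{\mathscr{H}_{E}}^{2}$ yields both the $l^{\infty}$-bound (after normalizing at the base point) and the continuous extension to $M$, and the algebra property comes from the identical Leibniz splitting $u(x)v(x)-u(y)v(y)=u(x)\bigl(v(x)-v(y)\bigr)+v(y)\bigl(u(x)-u(y)\bigr)$. The only cosmetic difference is that you close the estimate with $|a+b|^{2}\leq 2|a|^{2}+2|b|^{2}$ while the paper uses the Cauchy--Schwarz form $|ab+cd|^{2}\leq(|a|^{2}+|c|^{2})(|b|^{2}+|d|^{2})$, giving the same conclusion up to constants.
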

\begin{proof}
The containment in (\ref{eq:gg45}) follows from the estimate (\ref{eq:gg3}). 

We proceed to show that $\mathscr{H}_{E}$ is an algebra when $\left(V,d\right)$
is assumed bounded:

Let $u,w\in\mathscr{H}_{E}$, then 
\begin{align*}
\left(uw\right)\left(x\right) & :=u\left(x\right)w\left(x\right),\quad\forall x\in V;
\end{align*}
satisfies 
\begin{align}
\left\Vert uw\right\Vert _{\mathscr{H}_{E}}^{2} & \leq\left(\left\Vert u\right\Vert _{\infty}^{2}+\left\Vert w\right\Vert _{\infty}^{2}\right)\left(\left\Vert u\right\Vert _{\mathscr{H}_{E}}^{2}+\left\Vert w\right\Vert _{\mathscr{H}_{E}}^{2}\right).\label{eq:ff1}
\end{align}
Since $u,w\in l^{\infty}\left(V\right)$, it follows that $uw\in\mathscr{H}_{E}$,
i.e., $\left\Vert uw\right\Vert _{\mathscr{H}_{E}}<\infty$. 

The proof of (\ref{eq:ff1}) is as follows:
\begin{eqnarray*}
 &  & \sum_{E}c_{xy}\left|\left(uw\right)\left(x\right)-\left(uw\right)\left(y\right)\right|^{2}\\
 & = & \sum_{E}c_{xy}\left|u\left(x\right)\left(w\left(x\right)-w\left(y\right)\right)+w\left(y\right)\left(u\left(x\right)-u\left(y\right)\right)\right|^{2}\\
 & \underset{\left(\text{Schwarz}\right)}{\leq} & \sum_{E}c_{xy}\left(\left|u\left(x\right)\right|^{2}+\left|w\left(y\right)\right|^{2}\right)\left(\left|u\left(x\right)-u\left(y\right)\right|^{2}+\left|w\left(x\right)-w\left(y\right)\right|^{2}\right)\\
 & \leq & \left(\left\Vert u\right\Vert _{\infty}^{2}+\left\Vert w\right\Vert _{\infty}^{2}\right)\left(\sum_{E}c_{xy}\left|u\left(x\right)-u\left(y\right)\right|^{2}+\sum_{E}c_{xy}\left|w\left(x\right)-w\left(y\right)\right|^{2}\right)
\end{eqnarray*}
which is the desired estimate.\end{proof}
\begin{rem}
After the completion of a first version of our paper, D. Lenz kindly
informed us that a version of Theorem \ref{thm:algebra} above is
in the paper \cite{Georgakopoulos2014}. Our approach and aim here
is different though.\end{rem}
\begin{cor}
\label{cor:fext}Let $V,E,c,d=d_{res}$ be as above, i.e., assume
that $d$ is bounded, and that $M$ is compact; then when the constant
function $\mathbbm{1}$ on $M$ is adjoined 
\begin{equation}
\widetilde{\mathscr{H}}_{E}=\{\widetilde{f}\:|\: f\in\mathscr{H}_{E}\}\subset C\left(M\right)\label{eq:m3}
\end{equation}
is a dense subalgebra; dense in the uniform norm on $C\left(M\right)$.\end{cor}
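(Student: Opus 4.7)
The plan is to apply the Stone--Weierstrass theorem to the algebra $\mathcal{A} := \widetilde{\mathscr{H}}_E + \mathbb{C}\,\mathbbm{1} \subset C(M)$. Three of the four hypotheses are essentially already present in the excerpt: $M$ is compact Hausdorff by assumption; the constant $\mathbbm{1}$ is in $\mathcal{A}$ by construction; and $\widetilde{\mathscr{H}}_E$ is a subalgebra of $C(M)$ because Theorem \ref{thm:algebra} gives $\mathscr{H}_E \cdot \mathscr{H}_E \subset \mathscr{H}_E$ at the level of $V$, and the extension rule \eqref{eq:m1} is multiplicative: $\widetilde{uw}(b) = \lim_i u(x_i)w(x_i) = \widetilde u(b)\widetilde w(b)$ for any $x_i \to b$. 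Thus the only nontrivial verification is that $\mathcal{A}$ separates points of $M$.

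For the separation step, fix $b \neq b'$ in $M$ and set $\delta := d(b,b') > 0$; choose representing sequences $x_i \to b$, $y_i \to b'$ in $V$. I would use the dipoles $v_i := v_{x_i,y_i} \in \mathscr{H}_E$ themselves as separating functions. Two facts are key: by Theorem \ref{thm:metric},
\[
\|v_i\|_{\mathscr{H}_E}^2 \;=\; v_i(x_i)-v_i(y_i) \;=\; d(x_i,y_i),
\]
and by the continuous extension of $d$ to $M$, $d(x_i,y_i) \to \delta$. Applying the Lipschitz estimate \eqref{eq:m2} to the function $v_i$ itself gives, for any $z \in V$,
\[
|v_i(z) - \widetilde v_i(b)|^2 \le d(\gamma(z),b)\,\|v_i\|_{\mathscr{H}_E}^2,
\]
and analogously for $b'$. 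Inserting $z = x_i$ and $z = y_i$ and using $v_i(x_i) - v_i(y_i) = d(x_i,y_i)$, one obtains
\[
\bigl|\widetilde v_i(b) - \widetilde v_i(b') - d(x_i,y_i)\bigr| \;\le\; \bigl(\sqrt{d(\gamma(x_i),b)} + \sqrt{d(\gamma(y_i),b')}\bigr)\sqrt{d(x_i,y_i)}.
\]
Since $d(\gamma(x_i),b) \to 0$, $d(\gamma(y_i),b') \to 0$, and $d(x_i,y_i) \to \delta$, the right-hand side vanishes while the subtracted quantity tends to $\delta > 0$. Hence $\widetilde v_i(b) - \widetilde v_i(b') \to \delta$, and for all sufficiently large $i$ the continuous function $\widetilde v_i \in \widetilde{\mathscr{H}}_E$ separates $b$ from $b'$.

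With separation established, Stone--Weierstrass immediately yields that $\mathcal{A}$ is dense in $C(M)$ in the uniform norm, which is the claim. The main obstacle is really the separation step; everything else is bookkeeping. The separation argument itself is delicate only in that one must propagate the equality $\|v_i\|^2 = d(x_i,y_i)$ from the vertex pair $(x_i,y_i)$ to the boundary pair $(b,b')$ through the continuous extension, and this is exactly what the $\tfrac12$-Hölder bound \eqref{eq:m2} supplies. I do not foresee any obstacle beyond verifying this passage to the limit carefully.
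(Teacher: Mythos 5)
Your proof is correct, and at the top level it follows the same strategy as the paper: apply Stone--Weierstrass to $\widetilde{\mathscr{H}}_{E}$ with $\mathbbm{1}$ adjoined, so that everything hinges on separation of points. But your execution of the separation step is genuinely different from, and in fact more complete than, the paper's. The paper argues by contradiction only on the vertex set: if $v_{x}\left(y\right)=v_{x}\left(z\right)$ for all $x$, then $v_{y}-v_{z}$ is orthogonal to the dense span of the dipoles, hence zero, contradicting $d\left(y,z\right)=\left\Vert v_{y}-v_{z}\right\Vert _{\mathscr{H}_{E}}^{2}>0$; it then asserts that separating points of $V$ ``is enough'' because $M$ is the completion of $\left(V,d\right)$. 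That reduction is not a formal triviality: an algebra of H\"older-continuous functions can separate a dense subset while failing to separate two of its boundary points (its members may all have extensions that merge at two distinct limit points), so the paper's proof, as written, does not actually treat pairs $b\neq b'$ in $M\backslash V$. Your argument does exactly this missing work: taking $v_{i}=v_{x_{i},y_{i}}$ along representing sequences, the identity $\left\Vert v_{i}\right\Vert _{\mathscr{H}_{E}}^{2}=v_{i}\left(x_{i}\right)-v_{i}\left(y_{i}\right)=d\left(x_{i},y_{i}\right)$ from Theorem \ref{thm:metric}, combined with the $\frac{1}{2}$-H\"older bound \eqref{eq:m2}, forces $\widetilde{v}_{i}\left(b\right)-\widetilde{v}_{i}\left(b'\right)\rightarrow\widetilde{d}\left(b,b'\right)>0$, so some $\widetilde{v}_{i}$ separates $b$ from $b'$. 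What the paper's route buys is brevity (a soft Hilbert-space duality argument, no estimates); what yours buys is that it covers all of $M$, quantitatively, and so closes the gap. One small omission on your side: for the complex form of Stone--Weierstrass you must also record that $\widetilde{\mathscr{H}}_{E}$ is closed under complex conjugation (the paper notes this); it is immediate here, since the energy form is conjugation-invariant and the extension rule \eqref{eq:m1} commutes with conjugation, and in any case your separating functions $\widetilde{v}_{i}$ are real-valued.
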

\begin{proof}
We already proved that $\widetilde{\mathscr{H}}_{E}$ is an algebra
of continuous functions on $M$ (= the metric completion of $\left(V,d_{res}\right)$),
so we only need to show that it is dense in the $\left\Vert \cdot\right\Vert _{\infty}$-norm
on $M$. Since $M$ is compact, $\Vert\widetilde{f}\Vert_{\infty}=\max\{|\widetilde{f}\left(b\right)|\::\: b\in M\}$. 

It is clear that $\widetilde{\mathscr{H}}_{E}$ is closed under complex
conjugation; so, by the Stone-Weierstrass theorem, we only need to
prove that it separates points. We will prove that if $b\neq b'$
in $M$ then there is a vertex $x\in V$ such that $\widetilde{v}_{x}\left(b\right)\neq\widetilde{v}_{x}\left(b'\right)$.

Since $M$ is the metric completion of $\left(V,d\right)$, it is
enough to show that $\widetilde{\mathscr{H}}_{E}$ separate points
in $V$. Assume the contrary: that there are vertices $y,z\in V$,
$y\neq z$ such that $v_{x}\left(y\right)=v_{x}\left(z\right)$ holds
for all $x\in V$; in other words, $\left\langle v_{x},v_{y}-v_{z}\right\rangle _{\mathscr{H}_{E}}=0$
holds for all $x\in V$. But $span\left\{ v_{x}\:|\: x\in V\right\} $
is dense in $\mathscr{H}_{E}$; and so $v_{y}-v_{z}=0$, contradicting
$d\left(y,z\right)=\left\Vert v_{y}-v_{z}\right\Vert _{\mathscr{H}_{E}}^{2}>0$. 
\end{proof}

\section{\label{sub:mcom}Discrete resistance metric -- metric completions}

Set $d:=d_{res}$ the resistance metric, see (\ref{eq:cr4}). Let
$(M,\widetilde{d})$ be the metric completion of $\left(V,d\right)$,
i.e., $V$ consists of a metric space $M$ with the metric
\begin{align}
d_{res}\left(x,y\right) & =\left\Vert v_{xy}\right\Vert _{\mathscr{H}_{E}}^{2}\label{eq:d1}
\end{align}
where $v_{xy}$ is the \emph{dipole} vector in (\ref{eq:dipole}). 

It is an important theorem \cite{JP11,JP10} that $d_{res}$ in (\ref{eq:d1})
is indeed a metric, i.e., that 
\begin{align}
d_{res}\left(x,y\right) & \leq d_{res}\left(x,z\right)+d_{res}\left(z,y\right)\label{eq:d2}
\end{align}
holds for $\forall x,y,z\in V$. This result applies to all weighted
graph models where $d_{res}$ is computed from a fixed conductance
function $c:E\rightarrow\mathbb{R}_{+}$.

Let 
\begin{align}
K\left(x,y\right) & =\left\langle v_{x},v_{y}\right\rangle _{\mathscr{H}_{E}},\label{eq:g2a}
\end{align}
then the triangle inequality (\ref{eq:d2}) is equivalent to 
\begin{align}
K\left(x,y\right) & \geq K\left(x,z\right)+K\left(z,y\right)-K\left(z,z\right),\quad\forall x,y,z\in V.\label{eq:gg1}
\end{align}

Now assume that $d=d_{res}$ is \emph{bounded}. 
\begin{defn}
\label{def:typeA}We say that a system $(V,E,c,d_{res})$ is type
$A$ if whenever $\lim_{j}v_{x_{j}}$ exists in $C\left(V,d\right)$
then $\left(x_{j}\right)$ is a Cauchy sequence in $(V,d)$.\end{defn}
\begin{thm}
\label{thm:Mcpt}If $d_{res}$ is bounded on $V\times V$, and assume
the system $\left(V,E,c,d_{res}\right)$ is of type $A$; then $(M,\widetilde{d})$
is a compact metric space.\end{thm}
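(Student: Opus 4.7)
Since $M$ is by construction the metric completion of $(V,d)$ it is complete, so it suffices to show $M$ is totally bounded; equivalently, $(V,d)$ is totally bounded. I will argue by contradiction, using weak compactness of bounded sets in the Hilbert space $\mathscr{H}_{E}$ together with the type $A$ hypothesis from Definition \ref{def:typeA}.

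Suppose $(V,d)$ is not totally bounded. Then there exist $\varepsilon>0$ and a sequence $(x_{n})\subset V$ with $d(x_{n},x_{m})\geq\varepsilon$ whenever $n\neq m$. Consider the associated dipoles $v_{x_{n}}\in\mathscr{H}_{E}$ (with the standard normalization $v_{x}(o)=0$). Because $d$ is bounded on $V\times V$, say $d\leq D$, Theorem \ref{thm:metric} gives
\[
\|v_{x_{n}}\|_{\mathscr{H}_{E}}^{2}=d(x_{n},o)\leq D,
\]
so $(v_{x_{n}})$ is norm-bounded in the Hilbert space $\mathscr{H}_{E}$. By Banach--Alaoglu, extract a weakly convergent subsequence $v_{x_{n_{k}}}\rightharpoonup h$ with $h\in\mathscr{H}_{E}$.

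For each $y\in V$, the reproducing identity (\ref{eq:dp1}) yields
\[
v_{x_{n_{k}}}(y)=\langle v_{y},v_{x_{n_{k}}}\rangle_{\mathscr{H}_{E}}\longrightarrow\langle v_{y},h\rangle_{\mathscr{H}_{E}},
\]
so, on the pointwise representative $h$ with $h(o)=0$, we have $v_{x_{n_{k}}}\to h$ pointwise on $V$. The limit $h\in\mathscr{H}_{E}$ is $\frac{1}{2}$-Hölder on $(V,d)$ via the standard bound $|h(x)-h(y)|^{2}\leq d(x,y)\|h\|_{\mathscr{H}_{E}}^{2}$, and the whole subsequence $\{v_{x_{n_{k}}}\}$ is uniformly equicontinuous with Hölder constant $\sqrt{D}$. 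Pointwise convergence combined with uniform equicontinuity then forces $v_{x_{n_{k}}}\to h$ in $C(V,d)$. Applying the type $A$ hypothesis, $(x_{n_{k}})$ must be Cauchy in $(V,d)$, contradicting the assumed $\varepsilon$-separation. Hence $(V,d)$ is totally bounded, and $M$ is compact.

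\textbf{Main obstacle.} The delicate step is the final one: passing from weak convergence in $\mathscr{H}_{E}$ to convergence in whichever topology on $C(V,d)$ is intended in Definition \ref{def:typeA}. Weak $\mathscr{H}_{E}$-convergence immediately gives pointwise convergence to a continuous ($\frac{1}{2}$-Hölder) limit, and uniform equicontinuity of the normalized dipole family upgrades this to locally uniform convergence; this is enough provided type $A$ is read with either of these natural topologies on $C(V,d)$. If one instead demands uniform convergence on all of $V$, the equicontinuity bookkeeping must be pushed further, but the architecture of the proof---Banach--Alaoglu applied to the bounded family $\{v_{x}:x\in V\}\subset\mathscr{H}_{E}$, followed by invocation of type $A$---does not change.
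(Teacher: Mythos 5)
Your proof is correct, and it shares the paper's overall architecture---bounded resistance metric gives a norm-bounded dipole family, a compactness principle in function space extracts a subsequence whose dipoles converge, and the type $A$ hypothesis converts this into a Cauchy subsequence of vertices---but the two ingredients you plug into that architecture differ from the paper's. Where you invoke Banach--Alaoglu (weak sequential compactness of the ball in $\mathscr{H}_{E}$) and read off pointwise convergence from the reproducing identity $v_{x}(y)=\left\langle v_{y},v_{x}\right\rangle _{\mathscr{H}_{E}}$, the paper instead applies Arzel\`a--Ascoli to the family of extensions $\widetilde{v}_{x}\in C\left(M\right)$, using uniform boundedness and equicontinuity, to get convergence in the Montel topology of uniform convergence on compact sets. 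And where you argue by contradiction through total boundedness of $\left(V,d\right)$ (an $\varepsilon$-separated sequence cannot have a Cauchy subsequence), the paper argues directly: every sequence in $V$ admits a subsequence converging to a point of $M$, and sequential compactness is then transferred from $V$ to $M$ by density. Your route buys two things: weak compactness in a Hilbert space is a cleaner tool here than Arzel\`a--Ascoli over the a priori non-compact space $M$ (whose compactness is exactly what is being proven, so ``uniform convergence on compact subsets of $M$'' is a weaker conclusion than it sounds at that stage), and the total-boundedness framing makes the final passage from $V$ to $M$ a standard fact rather than the paper's rather terse closing remark. Both proofs, however, stand or fall on the same reading of Definition \ref{def:typeA}: the topology on $C\left(V,d\right)$ is left unspecified in the paper, and in both arguments what one actually produces is pointwise convergence upgraded by equicontinuity to uniform convergence on totally bounded sets---you flag this honestly, and the paper's own invocation of type $A$ requires exactly the same interpretation, so this is an ambiguity you inherit from the statement rather than a gap you introduce.
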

\begin{proof}
Fix a base-point $o\in V$, and set $v_{x}=v_{x,o}$, $x\in V\backslash\left\{ o\right\} $,
then 
\begin{align}
v_{xy} & =v_{x}-v_{y}\label{eq:gg2}
\end{align}
as follows from (\ref{eq:gg1}); also see Lemma \ref{lem:dipole}.
By Schwarz, applied to the energy Hilbert space $(\mathscr{H}_{E},\left\langle \cdot,\cdot\right\rangle _{\mathscr{H}_{E}})$,
we get the following Lipschitz-estimate:
\begin{align}
\left|f\left(x\right)-f\left(y\right)\right|^{2} & \leq d\left(x,y\right)\left\Vert f\right\Vert _{\mathscr{H}}^{2},\quad\forall f\in\mathscr{H}_{E},\; x,y\in V.\label{eq:gg3}
\end{align}
Consequences of (\ref{eq:gg1})-(\ref{eq:gg3}): 
\begin{enumerate}
\item Every $f\in\mathscr{H}_{E}$ extends to a uniformly continuous function
$\widetilde{f}$ on $M$; extension by metric limits. 
\item If $x_{i}\in V$, and $d\left(x_{i},x_{j}\right)\rightarrow0$, for
$i,j\rightarrow\infty$, then $f\left(x_{i}\right)$ has a limit in
$\mathbb{C}$ (or $\mathbb{R}$). Set 
\begin{align}
\widetilde{x} & \in M,\quad\widetilde{x}=\lim_{i}x_{i}.\label{eq:gg4}
\end{align}
If $\left(x_{i}\right),\left(y_{i}\right)\subset V$ are Cauchy sequences,
set (the extended metric $\widetilde{d}$):
\begin{align}
\widetilde{d}\left(\widetilde{x},\widetilde{y}\right) & =\lim_{i\rightarrow\infty}d\left(x_{i},x_{j}\right);\label{eq:gg5}
\end{align}
then by (\ref{eq:gg3}), we get 
\begin{align}
\big|\widetilde{f}\left(\widetilde{x}\right)-\widetilde{f}\left(\widetilde{y}\right)\big|^{2} & \leq\widetilde{d}\left(\widetilde{x},\widetilde{y}\right)\left\Vert f\right\Vert _{\mathscr{H}_{E}.}^{2}\label{eq:gg6}
\end{align}

\end{enumerate}

The assertion in the theorem follows from the considerations below. 

\end{proof}
\begin{lem}
An application of Arzelà\textendash Ascoli shows that 
\begin{align}
\left\{ \widetilde{f}\subset C\left(M\right)\:\big|\: f\in\mathscr{H}_{E},\left\Vert f\right\Vert _{\mathscr{H}_{E}}\leq1\right\} \label{eq:gg7}
\end{align}
is relatively compact in $C\left(M\right)$, in the Montel topology
of uniform convergence on compact sets.
\end{lem}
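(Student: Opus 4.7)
The plan is to verify the two hypotheses of Arzelà\textendash Ascoli on the compact metric space $(M,\widetilde{d})$ (compactness having been established in Theorem \ref{thm:Mcpt}), and then invoke the theorem. Since $M$ is compact, uniform convergence on compact subsets coincides with uniform convergence on all of $M$, so ``Montel topology'' here is just the $\left\Vert \cdot\right\Vert _{\infty}$-topology on $C(M)$; no additional work is needed to convert the two notions.

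For equicontinuity I would invoke the Lipschitz estimate \eqref{eq:gg6} directly. For any $f\in\mathscr{H}_{E}$ with $\left\Vert f\right\Vert _{\mathscr{H}_{E}}\le 1$, and any $\widetilde{x},\widetilde{y}\in M$,
\begin{equation*}
\bigl|\widetilde{f}(\widetilde{x})-\widetilde{f}(\widetilde{y})\bigr|\;\le\;\sqrt{\widetilde{d}(\widetilde{x},\widetilde{y})}.
\end{equation*}
Thus every member of the family \eqref{eq:gg7} is uniformly $\tfrac{1}{2}$-H\"older on $(M,\widetilde{d})$ with H\"older constant $\le 1$, which is stronger than equicontinuity (in fact a uniform modulus of continuity $\omega(t)=\sqrt{t}$, independent of $f$).

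For uniform boundedness I would first pin down the representative in each $\mathscr{H}_{E}$-class by the base-point normalization $f(o)=0$; this is harmless because the inner product \eqref{eq:Einn} depends only on differences, and the extension $\widetilde{f}$ then automatically satisfies $\widetilde{f}(\gamma(o))=0$ under \eqref{eq:md3}. Let $D:=\sup_{x,y\in V}d_{res}(x,y)<\infty$ by hypothesis; by continuity of $\widetilde{d}$ and density of $\gamma(V)$ in $M$, the diameter of $(M,\widetilde{d})$ is also $\le D$. Applying \eqref{eq:gg6} once more,
\begin{equation*}
\bigl|\widetilde{f}(\widetilde{x})\bigr| \;=\; \bigl|\widetilde{f}(\widetilde{x})-\widetilde{f}(\gamma(o))\bigr|\;\le\;\sqrt{\widetilde{d}(\widetilde{x},\gamma(o))}\;\le\;\sqrt{D},
\end{equation*}
for every $\widetilde{x}\in M$ and every $f$ in the normalized family. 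Hence the family is uniformly bounded in $\left\Vert \cdot\right\Vert _{\infty}$ on the compact space $M$.

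With equicontinuity and uniform boundedness in hand, Arzel\`a\textendash Ascoli on the compact metric space $M$ yields relative compactness of \eqref{eq:gg7} in $(C(M),\left\Vert \cdot\right\Vert _{\infty})$, which is precisely the conclusion. There is no genuine obstacle here; the only subtlety is the normalization $f(o)=0$ used to turn the seminorm $\left\Vert \cdot\right\Vert _{\mathscr{H}_{E}}$ into genuine pointwise control, and this is the standard device for energy-Hilbert spaces.
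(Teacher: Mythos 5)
Your two estimates are exactly the right ones, and they are what the paper's (implicit) argument rests on: the $\tfrac12$-H\"older bound from \eqref{eq:gg6} gives equicontinuity with the uniform modulus $\omega(t)=\sqrt{t}$, and the base-point normalization $f(o)=0$ (genuinely needed, since $\left\Vert \cdot\right\Vert _{\mathscr{H}_{E}}$ annihilates constants and the un-normalized ``unit ball'' would contain all constant functions) together with the bounded diameter gives the uniform bound $\sqrt{D}$. That normalization remark is a subtlety the paper glosses over, and you handle it correctly. However, there is a circularity in your framing. This lemma is not a consequence of Theorem \ref{thm:Mcpt}; it is an ingredient in its proof: the paper's proof of Theorem \ref{thm:Mcpt} ends by deferring to ``the considerations below,'' and those considerations are precisely this lemma, the estimate \eqref{eq:gg8}, and Corollary \ref{cor:Mcpt}, whose proof is where the conclusion ``$M$ is compact'' is finally drawn. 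So at the point where this lemma must be established, $M$ is only known to be a complete, bounded metric space; you may not cite its compactness, nor collapse the Montel topology to the $\left\Vert \cdot\right\Vert _{\infty}$-topology --- that collapse is exactly what compactness would buy, and it is the reason the lemma is phrased in terms of uniform convergence on compact sets in the first place.

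The gap is repairable without touching your estimates. Keep the normalization, the uniform bound $\bigl|\widetilde{f}\bigr|\le\sqrt{D}$, and the uniform $\tfrac12$-H\"older modulus; then invoke the general Ascoli theorem for the compact-open topology: on an arbitrary topological space $X$, an equicontinuous, pointwise bounded family in $C\left(X\right)$ is relatively compact in the topology of uniform convergence on compact subsets (on equicontinuous families this topology agrees with the topology of pointwise convergence, and the pointwise closure of such a family is compact by a Tychonoff argument). Applying this with $X=M$, viewed only as a complete bounded metric space, yields the lemma exactly as stated. Compactness of $M$ is then obtained afterwards, via \eqref{eq:gg8}, Corollary \ref{cor:Mcpt}, and the type $A$ hypothesis --- at which point, and only at that point, one may upgrade the Montel topology to the uniform norm topology as you do.
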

But if $d$ is bounded on $V\times V$, then 
\begin{align}
\left\Vert v_{x_{i}}\right\Vert _{\mathscr{H}_{E}} & \leq A\cdot K_{d}\label{eq:gg8}
\end{align}
where $A$ is a fixed global constant, since 
\begin{align}
d\left(x_{i},x_{j}\right) & =\left\Vert v_{x_{i}}-v_{x_{j}}\right\Vert _{\mathscr{H}_{E}}^{2}.\label{eq:gg9}
\end{align}
Hence by (\ref{eq:gg7}) with $K_{d}$ in place of 1, we get that:
\begin{cor}
\label{cor:Mcpt}Assume type $A$, then for every sequence $x_{1},x_{2},x_{3},\cdots$
in $V$, $\exists$ subsequence $\left(x_{i_{k}}\right)$ s.t.
\begin{enumerate}[label=(\roman{enumi}),ref=\roman{enumi}]
\item $\lim_{k\rightarrow\infty}x_{i_{k}}=b\in M$; and
\item Let $f_{lim}\in C\left(M\right)$ be the limit of the subsequence
$\left\{ \widetilde{v}_{x_{i}}\right\} \subset C\left(M\right)$,
then 
\begin{align*}
\lim_{k\rightarrow\infty}\widetilde{v}_{x_{i_{k}}}\left(b\right) & =f_{lim}\left(b\right).
\end{align*}

\end{enumerate}
\end{cor}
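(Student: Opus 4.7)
The plan is to extract the desired subsequence by Arzelà--Ascoli and then use the type $A$ hypothesis to upgrade functional convergence to vertex convergence. First, since $d_{res}$ is bounded, the identity (\ref{eq:gg9}) applied with a fixed reference vertex yields the uniform bound (\ref{eq:gg8}), so the family $\{\widetilde{v}_{x_{i}}\}_{i\in\mathbb{N}}$ is contained in a scalar multiple of the relatively compact set (\ref{eq:gg7}). Because $M$ is compact by Theorem \ref{thm:Mcpt}, the Montel (uniform-on-compacts) topology on $C(M)$ coincides with the uniform-norm topology, and hence there exist a subsequence $(x_{i_{k}})$ and an element $f_{lim}\in C(M)$ with $\widetilde{v}_{x_{i_{k}}}\to f_{lim}$ uniformly on $M$.

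Next, I would invoke the type $A$ assumption to produce the limit point $b\in M$ required by (i). Restricting the uniform convergence from $M$ down to $V\subset M$, the restrictions $\{\widetilde{v}_{x_{i_{k}}}|_{V}\}=\{v_{x_{i_{k}}}\}$ form a Cauchy (hence convergent) sequence in $C(V,d)$. By Definition \ref{def:typeA}, this forces $(x_{i_{k}})$ itself to be Cauchy in the resistance metric on $V$. The Cauchy condition (\ref{eq:md1}) together with the equivalence (\ref{eq:md2}) then determines a unique equivalence class $b\in M$, and $x_{i_{k}}\to b$ in the completion $(M,\widetilde{d})$ by construction, giving (i).

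For (ii), once the uniform convergence $\widetilde{v}_{x_{i_{k}}}\to f_{lim}$ on the compact metric space $M$ is in hand, pointwise evaluation at $b\in M$ is continuous, so $\widetilde{v}_{x_{i_{k}}}(b)\to f_{lim}(b)$ falls out immediately. The only substantive step is therefore the application of the type $A$ hypothesis: without it, Arzelà--Ascoli alone delivers a functional limit $f_{lim}\in C(M)$ but the sequence $(x_{i_{k}})\subset V$ might fail to converge in $M$, and the matching of the two limits at a common point $b$ would be ill-defined. Ruling out exactly this pathology is what type $A$ was introduced for, so the main (in fact only) obstacle is dispatched by the standing hypothesis of the corollary.
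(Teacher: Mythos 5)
Your overall strategy coincides with the paper's: use the Arzel\`a--Ascoli lemma together with the bound (\ref{eq:gg8}) to extract a subsequence along which the dipole functions $\widetilde{v}_{x_{i}}$ converge, then invoke type $A$ (Definition \ref{def:typeA}) to conclude that the vertex subsequence is Cauchy in $(V,d)$, hence convergent to some $b\in M$, and finally obtain (ii) by evaluating the convergent sequence of functions at $b$. That is exactly the paper's route, and at the paper's own level of rigor your version is, if anything, more clearly organized.

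There is, however, one genuine logical defect: you invoke Theorem \ref{thm:Mcpt} (compactness of $M$) in order to upgrade Montel convergence to uniform-norm convergence on $C(M)$. In this paper Theorem \ref{thm:Mcpt} is not available at this point. Its proof is explicitly deferred (``the assertion in the theorem follows from the considerations below''), and those considerations are precisely the Arzel\`a--Ascoli lemma on (\ref{eq:gg7}) and Corollary \ref{cor:Mcpt} itself, whose proof ends by concluding ``and so $M$ is compact.'' In the paper's dependency order, the compactness of $M$ is a \emph{consequence} of this corollary, so using it here is circular. Fortunately the appeal to compactness is inessential to your argument: the Arzel\`a--Ascoli lemma already supplies a subsequence $(\widetilde{v}_{x_{i_k}})$ convergent in the Montel topology, i.e., uniformly on compact subsets of the separable metric space $M$, and in particular pointwise. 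That weaker mode of convergence suffices for every step you take --- restriction to $V$ still produces a limit in $C(V,d)$, which triggers type $A$ and yields the Cauchy sequence $(x_{i_k})$ with limit $b\in M$; and evaluation at the single point $b$ (a compact set) still gives $\widetilde{v}_{x_{i_k}}(b)\to f_{lim}(b)$. With that one repair --- citing the lemma on (\ref{eq:gg7}) together with (\ref{eq:gg8}) in place of Theorem \ref{thm:Mcpt} --- your proof agrees with the paper's.
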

\begin{proof}
To see that $b\in M$, note that 
\begin{align*}
d\left(x_{i_{k}},x_{i_{l}}\right) & =\Vert v_{x_{i_{k}}}-v_{x_{i_{l}}}\Vert_{\mathscr{H}_{E}}^{2}=\left|\widetilde{v}_{i_{k}i_{l}}\left(x_{i_{k}}\right)-\widetilde{v}_{i_{k}i_{l}}\left(x_{i_{l}}\right)\right|\xrightarrow[\; k,l\rightarrow\infty\;]{}0;
\end{align*}
since by (\ref{eq:gg6}), the functions $\widetilde{v}_{i_{k}i_{l}}\left(\cdot\right)$
are uniformly bounded, and equicontinuous on $M$. Since we assume
the system $\left(V,E,c,d_{res}\right)$ is of type $A$, it follows
that every sequence $x_{1},x_{2}\cdots$ in $V$ has a convergence
subsequence with limit in $M$. By the definition of $M$, the same
is true for $M$, and so $M$ is compact: Every sequence $b_{1},b_{2},\cdots\subset M$
contains a convergent subsequence. \end{proof}
\begin{rem}
The following example from \cite{Georgakopoulos2014} shows that our
assumed condition ``type $A$'' in Theorem \ref{thm:Mcpt} and Corollary
\ref{cor:Mcpt} cannot be omitted. There are bounded resistance metrics
(non-type $A$) for which the corresponding completions are non-compact.

We learned from D. Lenz that the boundedness of the resistance metric
does not imply the completion $(M,\widetilde{d})$ is compact \cite{Georgakopoulos2014}.
Indeed, the type $A$ assumption for the system $\left(V,E,c,d_{res}\right)$
is required. (See Definition \ref{def:typeA}.)\end{rem}
\begin{example}[Example 8.6 in \cite{Georgakopoulos2014}]
\label{exa:cpt}Fig \ref{fig:2tra} below is a tree-like graph with
many ends all of which have bounded distance to the root (making the
resistance metric bounded) but at the same time being too far apart
from each other to be covered by finitely many balls of an fixed but
arbitrarily small size. Thus, the weighted graph in this case is bounded
with respect to $d_{res}$ metric and the completion is not compact
with respect to the resistance metric. 

The graph basically consists of a copy of the natural numbers with
the property that each natural number has a ray emanating from it
and this ray being again the natural numbers. There are weights (Fig
\ref{fig:2trb}) on the graph making all these copies of the natural
numbers having bounded diameter in the resistance metric. This makes
the resistance metric on this graph bounded. On the other hand a point
far out in one of the emanating rays has a uniform distance to any
point far out in any other emanating ray. This makes the example non-totally
bounded. Hence, the example has the mentioned properties.
\end{example}
\begin{figure}
\includegraphics[width=0.5\textwidth]{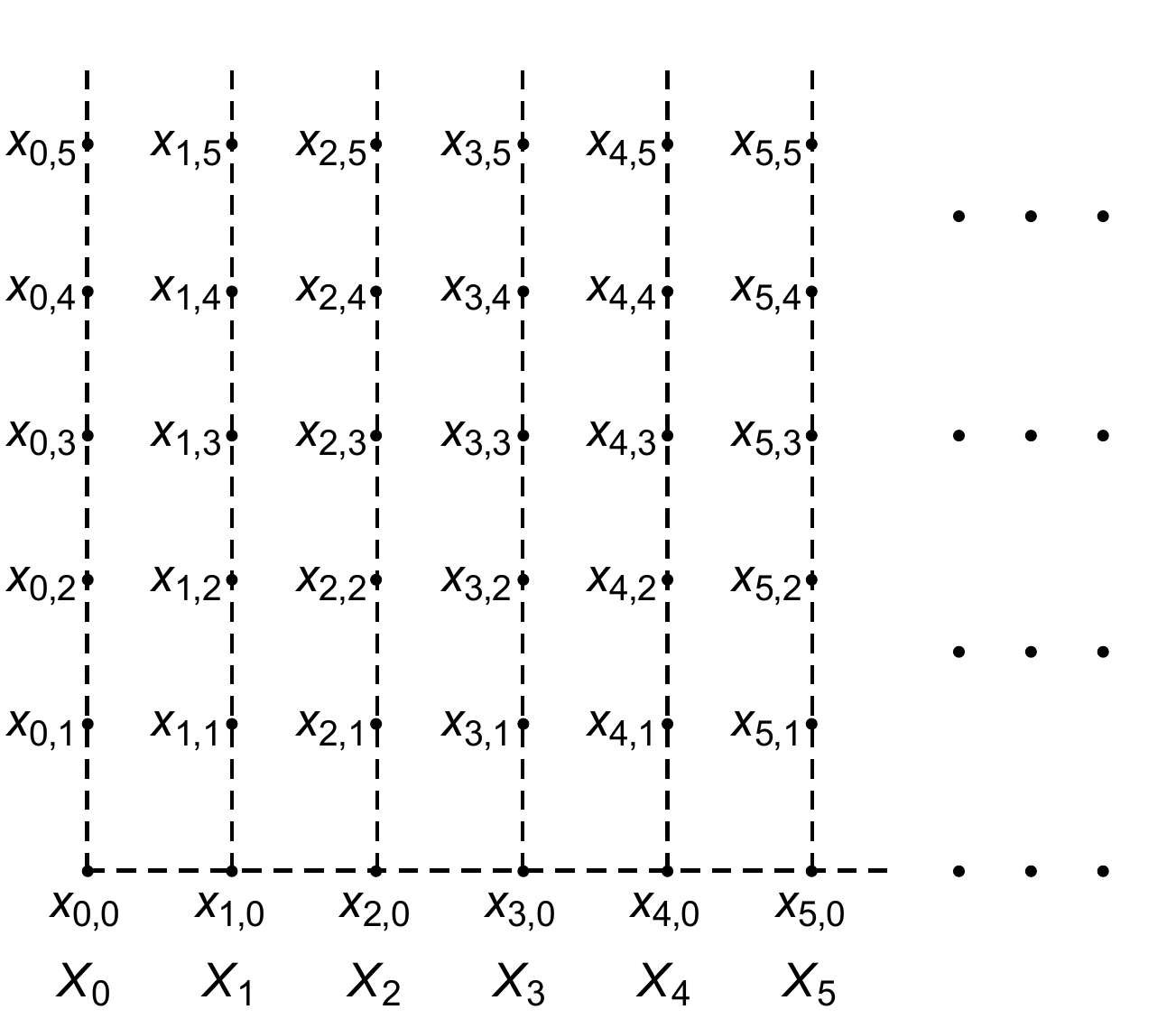}
\[
V=\bigcup_{n=0}^{\infty}X_{n},\quad X_{n}=\left\{ x_{nk}\::\: k=0,1,2,\cdots\right\} 
\]

\protect\caption{\label{fig:2tra}A double infinite planar graph: An infinitely long
comb as an infinite array of teeth, each tooth infinitely long. }

\end{figure}

\begin{figure}
\includegraphics[width=0.5\textwidth]{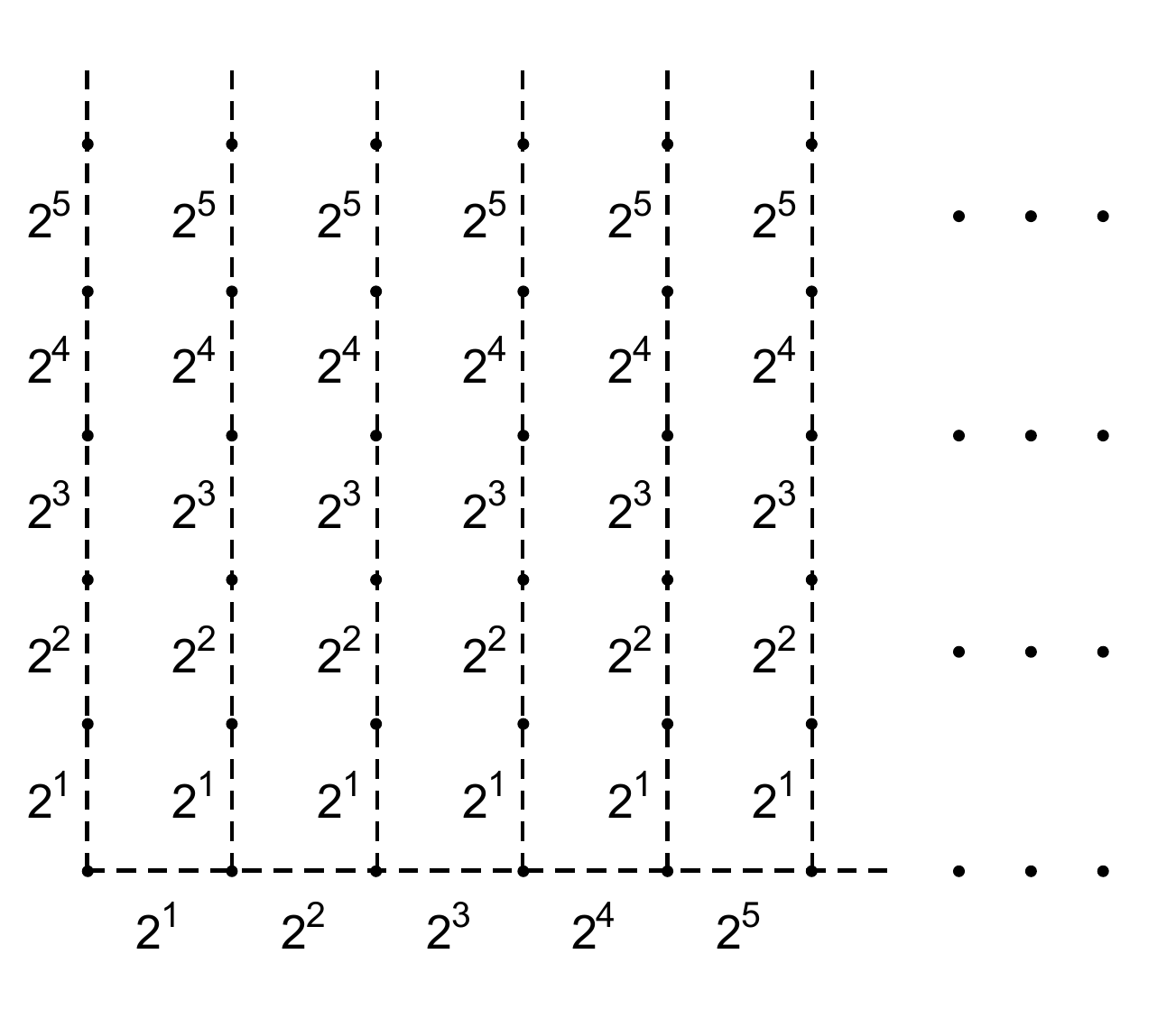}

\protect\caption{\label{fig:2trb}The conductance function $c_{\left(xy\right)}$. }
\end{figure}

\begin{lem}
Let $G=\left(V,E,c\right)$ be the weighted graph in Example \ref{exa:cpt}.
Fix a base-point $o\in V$, and set $\mathscr{D}_{E}=span\left\{ v_{x}\:\big|\: x\in V\backslash\left\{ o\right\} \right\} $
(see (\ref{eq:domE})). Then $\Delta\Big|_{\mathscr{D}_{E}}$, as
a densely defined Hermitian operator in the energy-Hilbert space $\mathscr{H}_{E}$,
is not essentially selfadjoint. Moreover, the deficiency indices are
$\left(\infty,\infty\right)$. \end{lem}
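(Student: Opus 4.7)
The plan is to directly construct an infinite linearly independent family $\{f_n\}_{n \geq 0} \subset \mathscr{H}_E$ of defect vectors of $\Delta|_{\mathscr{D}_E}$, that is, solutions of $\Delta f_n = -f_n$ pointwise on $V$, one per tooth of the comb. By Theorem \ref{thm:esa}, any single nonzero $f_n$ already defeats essential selfadjointness; and by the standard fact that for a positive symmetric operator $T$ one has $d_\pm = \dim \ker(T^* + I)$, producing an infinite-dimensional such family yields deficiency indices $(\infty, \infty)$.

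Along each tooth $T_n = \{x_{n,k} : k \geq 0\}$, the equation $\Delta f + f = 0$ at an interior vertex $x_{n,k}$ ($k \geq 1$) reduces to the three-term recurrence
\[
c_{x_{n,k-1}x_{n,k}}\, f(x_{n,k-1}) + c_{x_{n,k}x_{n,k+1}}\, f(x_{n,k+1}) = \bigl(c_{x_{n,k-1}x_{n,k}} + c_{x_{n,k}x_{n,k+1}} + 1\bigr) f(x_{n,k}).
\]
With the geometrically growing conductances of Fig.\ \ref{fig:2trb}, the asymptotic characteristic equation has two distinct real roots, and I would select the unique-up-to-scalar decaying solution $\phi_n^{\downarrow}$ on $T_n$; its tooth-energy $\sum_k c_{x_{n,k}x_{n,k+1}}|\phi_n^{\downarrow}(x_{n,k+1})-\phi_n^{\downarrow}(x_{n,k})|^2$ sums as a geometric series. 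An analogous decaying branch exists for the spine recurrence on the base vertices $\{x_{m,0}\}_m$, because the spine conductances must themselves grow in order to keep the spine of bounded resistance.

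I would then assemble $f_n$ globally: set $f_n(x_{n,0}) := 1$, extend along the spine by the decaying spine branch in both directions from $x_{n,0}$, and attach $\phi_m^{\downarrow}$ (normalized by the spine value at $x_{m,0}$) on each tooth $T_m$. The resulting $f_n$ then satisfies $\Delta f_n + f_n = 0$ on all of $V$, and the compounded geometric decay (exponential in $k$ along each $T_m$, and exponential in $|m-n|$ across the spine) makes the total $\|f_n\|_{\mathscr{H}_E}^2$ a convergent geometric series. Linear independence of $\{f_n\}$ is then immediate from the spine values, since the infinite matrix $(f_n(x_{m,0}))_{m,n}$ is exponentially diagonally dominant with unit diagonal, hence invertible on every finite principal block. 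The main obstacle I anticipate is this uniform global energy estimate, which requires balancing both tooth and spine conductance growth rates against the decay rates of $\phi_m^{\downarrow}$ simultaneously — a calculation made feasible precisely because $d_{res}$ is bounded throughout $G$, which is the defining feature of Example \ref{exa:cpt}.
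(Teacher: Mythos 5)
Your tooth-level analysis coincides with the paper's own proof: the three-term recurrence, the transfer-matrix roots $1$ and $\frac{1}{2}$, the selection of the decaying branch, and the geometric energy bound are exactly how the paper exhibits a defect vector, and one nonzero defect vector does rule out essential selfadjointness. The genuine gap is in your mechanism for the deficiency indices $(\infty,\infty)$. The spine of the comb in Example \ref{exa:cpt} is a \emph{half-line}, not a line. Once you slave every tooth to its decaying branch $\phi_m^{\downarrow}$, finite energy forces the spine values $g_m=f(x_{m,0})$ to be square-summable (each tooth contributes energy $\gtrsim|g_m|^2$, with a constant uniform in $m$ since the teeth are identical), and $(g_m)$ then satisfies a fixed second-order recurrence whose solutions decaying at $+\infty$ form a \emph{one}-dimensional space; the defect equation at the root $x_{0,0}$ is one further linear condition which you never check. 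So there is no ``decaying spine branch in both directions from $x_{n,0}$'': a nonzero solution of a second-order recurrence on a half-line which decays at $+\infty$ is unique up to scalar, and its values to the left of $x_{n,0}$ are determined --- of size roughly $2^{\,n-m}$ relative to $g_n$ --- not small. If you instead glue a right-decaying branch to a different branch on the left of $x_{n,0}$, then $(\Delta+1)f_n=0$ fails at $x_{n,0}$ itself: you have built the resolvent kernel, a solution of $(\Delta+1)f_n=c\,\delta_{x_{n,0}}$ with $c\neq 0$, not a defect vector. Either way, all the $f_n$ your recipe can legitimately produce are scalar multiples of a single function (if any exist at all), the matrix $(f_n(x_{m,0}))_{m,n}$ is nowhere near diagonally dominant, and your construction yields a defect space of dimension at most one.

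The repair uses precisely the freedom your ansatz discards: on each tooth \emph{both} solution branches have finite energy, not only $\phi^{\downarrow}$. Writing $d_k=l_{k+1}-l_k$, the tooth recurrence gives $d_k=\frac{1}{2}d_{k-1}+2^{-k-1}l_k$, so every bounded solution --- including the branch tending to a nonzero constant --- has $d_k=O(k2^{-k})$ and hence tooth energy $\sum_k 2^{k+1}|d_k|^2<\infty$: the energy form sees only differences, and the growing conductances crush them. This gives one net free parameter per tooth. Concretely, prescribe $g_0=f(x_{0,0})$ and $t_m=f(x_{m,1})$ arbitrarily for $m\le N$, set $t_m=0$ for $m>N$, and let the vertex equations propagate; the single linear condition that the coefficient of the non-decaying spine branch vanish as $m\to\infty$ cuts this $(N+2)$-dimensional parameter space down to a space of finite-energy pointwise solutions of $\Delta f=-f$ of dimension at least $N+1$. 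Letting $N\to\infty$ gives $(\infty,\infty)$. (For comparison, the paper's own proof also exhibits only one defect vector and leaves the index claim implicit; but your specific route to infinitely many cannot be made to work as stated.)
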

\begin{proof}
Let the $c$ be the conductance function as specified in Fig \ref{fig:2tra}-\ref{fig:2trb}.
Recall that 
\begin{align*}
\left(\Delta f\right)\left(x\right) & =\sum_{x\sim y}c_{xy}\left(f\left(x\right)-f\left(y\right)\right)=c\left(x\right)\left(f\left(x\right)-\sum_{x\sim y}p_{xy}f\left(y\right)\right)\\
 & =c\left(I-\mathbb{P}\right)f\left(x\right),\;\mbox{for all functions}\: f\:\mbox{on}\: V;
\end{align*}
where $c\left(x\right)=\sum_{y\sim x}c_{xy}$, $p_{xy}=c_{xy}/c\left(x\right)=$
transition probability, and $\left(\mathbb{P}f\right)\left(x\right)=\sum_{y\sim x}p_{xy}f\left(y\right)$.
Also see Theorem \ref{thm:esa}, Example \ref{exa:Zlat}, \ref{exa:Ntree},
and Remark \ref{rem:rwalk}. 

Suppose $f$ is a defect vector for $\Delta$. Since $\Delta$ is
positive, it suffices to consider $\Delta f=-f$. Note that 
\begin{equation}
\Delta f=-f\Longleftrightarrow c\left(I-\mathbb{P}\right)f=-f\Longleftrightarrow\mathbb{P}f=\left(1+\frac{1}{c}\right)f.\label{eq:df1}
\end{equation}
We proceed to show that $f$ is in $\mathscr{H}_{E}$, i.e., $\left\Vert f\right\Vert _{\mathscr{H}_{E}}<\infty$. 

Let $V=\left\{ x_{n,k}\right\} $ be the vertex-set as specified in
Fig \ref{fig:2tra}. Then, we have
\begin{eqnarray}
c\left(x_{n,k}\right) & = & 2^{k}+2^{k+1}\label{eq:df0}\\
p_{x_{n,k},x_{n,k-1}} & = & \frac{2^{k}}{2^{k}+2^{k+1}}=\frac{1}{3}\label{eq:df01}\\
p_{x_{n,k},x_{n,k+1}} & = & \frac{2^{k+1}}{2^{k}+2^{k+1}}=\frac{2}{3}\label{eq:df02}
\end{eqnarray}
and so 
\begin{eqnarray*}
\left(\mathbb{P}f\right)\left(x_{n,k}\right) & = & \frac{1}{3}f\left(x_{n,k-1}\right)+\frac{2}{3}f\left(x_{n,k+1}\right),\;\mbox{and}\\
\left(1+\frac{1}{c}\right)f\left(x_{n,k}\right) & = & \left(1+\frac{1}{2^{k}\cdot3}\right)f\left(x_{n,k}\right);\;\mbox{see}\:\text{\eqref{eq:df0}-\eqref{eq:df02}.}
\end{eqnarray*}
Thus, the defect vector $f$ satisfies $\Delta f=-f$ $\Longleftrightarrow$
\begin{equation}
\frac{1}{3}f\left(x_{n,k-1}\right)+\frac{2}{3}f\left(x_{n,k+1}\right)=\left(1+\frac{1}{2^{k}\cdot3}\right)f\left(x_{n,k}\right).\label{eq:df2}
\end{equation}
Set 
\begin{equation}
l_{k}:=l_{n,k}=f\left(x_{n,k}\right);\label{eq:df3}
\end{equation}
then we get the following recursive equation:
\begin{eqnarray}
\frac{1}{3}l_{k-1}+\frac{2}{3}l_{k+1} & = & \left(1+\frac{1}{2^{k}\cdot3}\right)l_{k};\label{eq:df4}
\end{eqnarray}
i.e., 
\[
l_{k+1}=\frac{3}{2}\left[\left(1+\frac{1}{2^{k}\cdot3}\right)l_{k}-\frac{1}{3}l_{k-1}\right]=\left(\frac{3}{2}+\frac{1}{2^{k+1}}\right)l_{k}-\frac{1}{2}l_{k-1}.
\]
Or, using matrix notation, we have 
\begin{equation}
\begin{pmatrix}l_{k+1}\\
l_{k}
\end{pmatrix}=\begin{pmatrix}\frac{3}{2}+\frac{1}{2^{k+1}} & -\frac{1}{2}\\
1 & 0
\end{pmatrix}\begin{pmatrix}l_{k}\\
l_{k-1}
\end{pmatrix}.\label{eq:df5}
\end{equation}

The asymptotic estimate of the sequence $\left(l_{k}\right)$ can
be derived from the eigenvalues of the coefficient matrix in (\ref{eq:df5}).
Note the eigenvalues are given by 
\[
x_{\pm}=\frac{\frac{3}{2}-\frac{1}{2^{k+1}}\pm\sqrt{\left(\frac{3}{2}-\frac{1}{2^{k+1}}\right)^{2}-2}}{2}\sim\frac{\frac{3}{2}\pm\frac{1}{2}}{2},\quad\text{asymptotically.}
\]

\emph{Conclusion}. The root $x_{-}=\frac{1}{2}$ shows that $l_{k}\sim1/2^{k}$
so $f\left(x_{n,k}\right)\sim1/2^{k}$ asymptotically. Consequently,
\begin{eqnarray*}
\left\Vert f\right\Vert _{\mathscr{H}_{E}}^{2} & \sim & \sum_{k}2^{k}\left(\frac{1}{2^{k}}-\frac{1}{2^{k+1}}\right)^{2}+\sum_{n}2^{n}\left(\frac{1}{2^{n}}-\frac{1}{2^{n+1}}\right)^{2}\\
 & \sim & \sum_{k}\frac{1}{2^{k}}+\sum_{n}\frac{1}{2^{n}}<\infty.
\end{eqnarray*}
Therefore, the corresponding defect vector $f$ is in $\mathscr{H}_{E}$,
and so $\Delta\big|_{\mathscr{H}_{E}}$ is not essentially selfadjoint.
\end{proof}
Set the Gelfand space $G_{E}=\left\{ \beta:\mathscr{H}_{E}\rightarrow\mathbb{C}(\text{or \ensuremath{\mathbb{R}}})\right\} $
(see \cite{MR1157815}) s.t. 
\begin{align}
\beta\left(uw\right) & =\beta\left(u\right)\beta\left(w\right),\quad\forall u,w\in\mathscr{H}_{E};\label{eq:gg46}
\end{align}
i.e., \emph{multiplicative functionals}.
\begin{defn}
Let $M:=$ metric completion of $\left(V,d_{res}\right)$. Set 
\begin{align*}
\left(x_{i}\right) & \sim\left(y_{i}\right)\overset{\text{Def}}{\Longleftrightarrow}d_{res}\left(x_{i},y_{i}\right)\rightarrow0
\end{align*}
for all Cauchy sequences $\left(x_{i}\right),\left(y_{i}\right)\subset V$. \end{defn}
\begin{thm}
\label{thm:gel}$M\subset G_{E}$, see (\ref{eq:gg46}). (The metric
completion is contained in the Gelfand space.)\end{thm}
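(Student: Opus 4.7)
The plan is to construct an explicit injection $\iota\colon M\hookrightarrow G_{E}$ by sending each boundary (or interior) point $b\in M$ to a multiplicative functional $\beta_{b}$ obtained via the continuous extension $\widetilde{f}$ of $f\in\mathscr{H}_{E}$ produced in (\ref{eq:m1})--(\ref{eq:m2}). The standing hypothesis (as in Section \ref{sub:mcom}) is that $d_{res}$ is bounded on $V\times V$, so Theorem \ref{thm:algebra} applies and $\mathscr{H}_{E}$ is already a pointwise algebra on $V$.

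First, for any $b\in M$ represented by a Cauchy sequence $(x_{i})\subset V$ and any $f\in\mathscr{H}_{E}$, the Lipschitz estimate $|f(x_{i})-f(x_{j})|^{2}\leq d(x_{i},x_{j})\|f\|_{\mathscr{H}_{E}}^{2}$ shows that $(f(x_{i}))$ is Cauchy in $\mathbb{C}$, so the limit
\[
\beta_{b}(f):=\widetilde{f}(b)=\lim_{i\to\infty}f(x_{i})
\]
exists. The same estimate (applied to pairs $(x_{i},y_{i})$) shows the limit depends only on the equivalence class of $(x_{i})$, hence $\beta_{b}$ is a well-defined linear functional on $\mathscr{H}_{E}$, continuous in the Hilbert-space norm with $|\beta_{b}(f)|\le\|\widetilde{f}\|_{\infty}\le C\|f\|_{\mathscr{H}_{E}}$ by (\ref{eq:m2}).

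Next I would verify the multiplicativity condition (\ref{eq:gg46}). Given $u,w\in\mathscr{H}_{E}$, Theorem \ref{thm:algebra} gives $uw\in\mathscr{H}_{E}$, so $\widetilde{uw}\in C(M)$. For any representative $(x_{i})$ of $b$,
\[
\beta_{b}(uw)=\lim_{i}(uw)(x_{i})=\lim_{i}u(x_{i})\,\lim_{i}w(x_{i})=\beta_{b}(u)\,\beta_{b}(w),
\]
where the factorization of the limit uses that both $u$ and $w$ are bounded on $V$ (by Theorem \ref{thm:algebra}) and that the pointwise product on $V$ is just ordinary multiplication of complex numbers. Thus $\beta_{b}\in G_{E}$.

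Finally I would establish injectivity of $\iota\colon b\mapsto\beta_{b}$, which is the one point requiring genuine content rather than a routine verification and is the main obstacle in the argument. If $b\neq b'$ in $M$, pick representing Cauchy sequences $(x_{i})\to b$ and $(y_{i})\to b'$ with $\widetilde{d}(b,b')>0$. By the separation argument already carried out in the proof of Corollary \ref{cor:fext} (using that $\mathrm{span}\{v_{x}:x\in V\}$ is dense in $\mathscr{H}_{E}$, hence the dipoles $\widetilde{v}_{x}$ separate points of $M$), there exists $x\in V$ with $\widetilde{v}_{x}(b)\neq\widetilde{v}_{x}(b')$, i.e.\ $\beta_{b}(v_{x})\neq\beta_{b'}(v_{x})$. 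Hence $\beta_{b}\neq\beta_{b'}$, so $\iota$ is injective and identifies $M$ with a subset of $G_{E}$, as claimed.
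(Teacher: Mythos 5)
Your proposal is correct and follows essentially the same route as the paper: both construct the point-evaluation functional $\beta_{b}(f)=\widetilde{f}(b)=\lim_{i}f(x_{i})$ via the Lipschitz estimate (\ref{eq:gg3}) and then check the multiplicative property (\ref{eq:gg46}). The paper compresses the last steps into ``(\ref{eq:gg46}) is then immediate,'' whereas you spell out the product rule (using Theorem \ref{thm:algebra} to guarantee $uw\in\mathscr{H}_{E}$) and add the injectivity of $b\mapsto\beta_{b}$ via the separation argument of Corollary \ref{cor:fext}; these are refinements of detail, not a different method.
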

\begin{proof}
Every $w\in\mathscr{H}_{E}$ extends by closure to $M$, by 
\begin{align}
\widetilde{w}\left(\widetilde{x}\right) & =\lim_{i\rightarrow\infty}w\left(x_{i}\right),\;\mbox{if}\; d_{res}\left(x_{i},x_{j}\right)\rightarrow0.\label{eq:gg47}
\end{align}
To see this, use the estimate $\left|w\left(x\right)-w\left(y\right)\right|^{2}\leq d\left(x,y\right)\left\Vert w\right\Vert _{\mathscr{H}_{E}}^{2}$,
$\forall w\in\mathscr{H}_{E}$; see (\ref{eq:gg3}).

Now, set $\beta_{\widetilde{x}}\left(w\right)=\widetilde{w}\left(\widetilde{x}\right)$,
and note that (\ref{eq:gg46}) is then immediate. (In fact, $M$ is
a compact metric space if $d_{res}$ is bounded.)\end{proof}
\begin{rem}
It was proved in \cite{Georgakopoulos2014} that the Gelfand space
is the Royden compactification; see \cite{Georgakopoulos2014} for
details. \end{rem}
\begin{question}
In these examples, what is the connection between (1) $M$, (2) $G_{E}$,
and (3) the $\infty$ path space $\Omega$ models? (Recall the $\omega\in\Omega$,
when $\omega=\left(x_{i}\right)_{i\in\mathbb{N}}$, where $x_{i}\in V$,
$\left(x_{i}x_{i+1}\right)\in E$.) \end{question}
\begin{rem}
In some cases $\left(x_{i}\right)_{i\in\mathbb{Z}}$, $\left(x_{i}x_{i+1}\right)\in E$
s.t. 
\begin{equation}
d_{res}\left(x_{i},x_{j}\right)\xrightarrow[\; i,j\rightarrow\infty\;]{}0,\label{eq:q8}
\end{equation}
there is a mapping $\psi:\Omega\rightarrow M\rightarrow G_{E}$. Note
that (\ref{eq:q8}) holds if $d_{res}$ is bounded. In this case $\exists$
continuous extension $\Omega\xrightarrow{\;\psi\;}M$, with $\psi\left(\omega\right)=\widetilde{x}$
where $\omega=\left(x_{i}\right)_{i\in\mathbb{Z}}\in\Omega$, and
where $\widetilde{x}\in M$ is the metric limit $\widetilde{d}_{res}\left(x_{i},\widetilde{x}\right)\rightarrow0$
if $d\left(x_{i},x_{j}\right)\rightarrow0$. See Fig \ref{fig:path}.
\end{rem}
\begin{figure}
\includegraphics[width=0.3\textwidth]{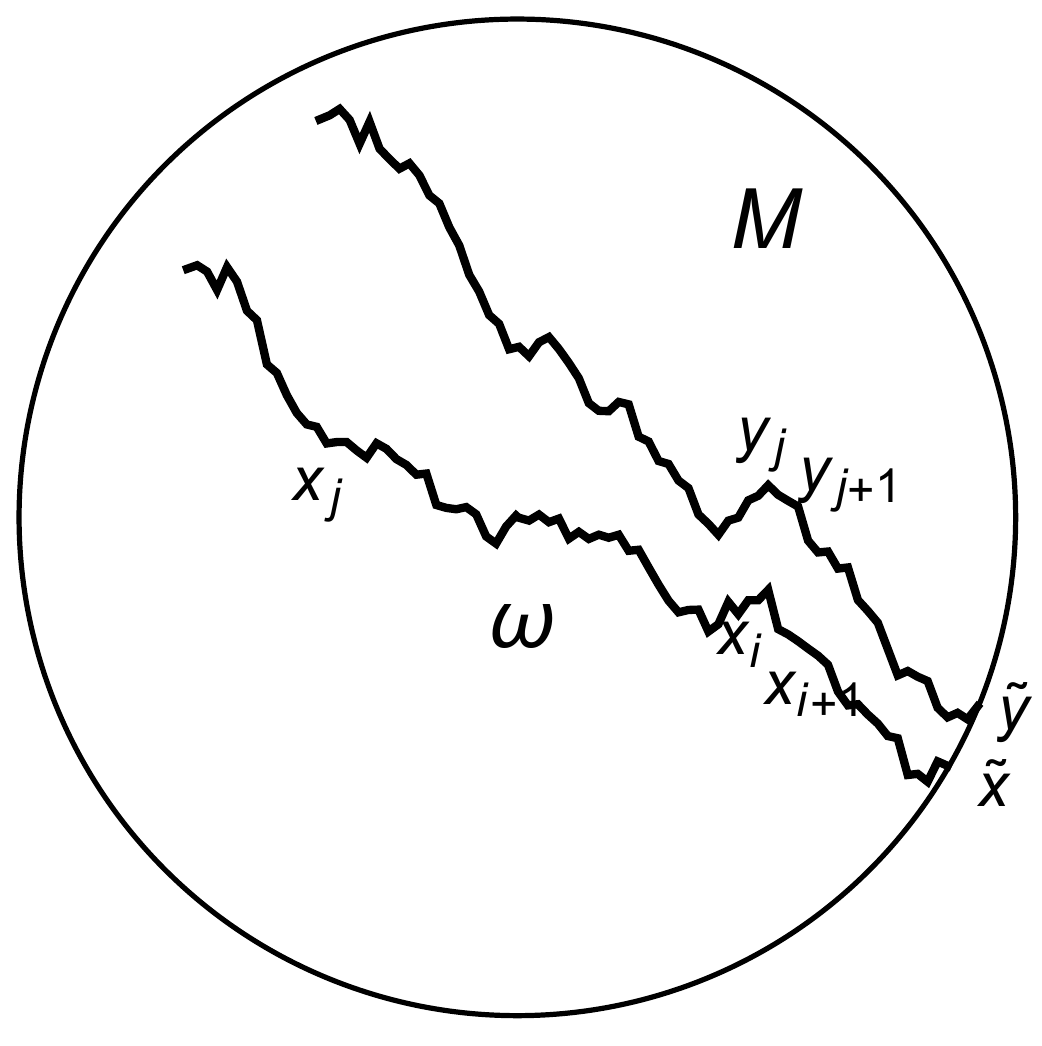}

\protect\caption{\label{fig:path}$\psi\left(\omega\right)=\widetilde{x}$; $\Omega=$
infinite path-space; $M=$ metric completion of $\left(V,d_{res}\right)$. }

\end{figure}

Actually even if not every path $\omega=\left(x_{i}\right)_{i\in\mathbb{Z}}$
satisfies $d_{res}\left(x_{i},x_{j}\right)\rightarrow0$, we can pass
to a sub-sequence.
\begin{thm}
\label{thm:cpt1}Assume that $d_{res}$ is type $A$ and bounded on
$V\times V$ (thus $(M,\widetilde{d}_{res})$ is compact by Theorem
\ref{thm:Mcpt}), and that $\omega=\left(x_{i}\right)_{i\in\mathbb{Z}}\in\Omega$,
then $\exists$ subsequence $\left\{ x_{i_{1}},x_{i_{2}},\cdots\right\} \subset\omega$,
and $\exists\widetilde{x}\in M$ s.t. 
\[
\widetilde{d}_{res}\left(x_{i_{k}},\widetilde{x}\right)\xrightarrow[\; k\rightarrow\infty\;]{}0.
\]
\end{thm}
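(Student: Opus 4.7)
The statement is essentially a restatement of sequential compactness, once one recalls that $V$ is embedded into $M$ by the natural inclusion $\gamma: V \to M$ sending $x \mapsto \mathrm{class}(x,x,x,\ldots)$ (see \eqref{eq:md3}). The plan is to view the path $\omega = (x_i)_{i \in \mathbb{Z}}$ as a sequence in the metric space $(M,\widetilde{d}_{res})$ via this embedding, and then invoke the fact that $M$ is compact.

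More concretely, first I would fix the path $\omega = (x_i)_{i \in \mathbb{Z}} \in \Omega$ and consider, say, the forward sequence $(x_i)_{i \geq 0}$ (either tail works; one may also reindex to $i \in \mathbb{N}$). Under $\gamma$, this produces a sequence $(\gamma(x_i))_{i \geq 0}$ of points in $M$. By the hypotheses of type $A$ and boundedness of $d_{res}$, Theorem \ref{thm:Mcpt} applies and $(M,\widetilde{d}_{res})$ is a compact metric space. Since compact metric spaces are sequentially compact, there exists a subsequence $(\gamma(x_{i_k}))_{k \in \mathbb{N}}$ and a point $\widetilde{x} \in M$ with
\[
\widetilde{d}_{res}\bigl(\gamma(x_{i_k}),\widetilde{x}\bigr) \xrightarrow[\; k \to \infty \;]{} 0.
\]
Since $\widetilde{d}_{res}$ restricted to $\gamma(V)$ agrees with $d_{res}$ on $V$, this is precisely the conclusion $\widetilde{d}_{res}(x_{i_k},\widetilde{x}) \to 0$.

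The only step that requires any care is making sure one is quoting the correct compactness statement, since the hypotheses involve the type $A$ condition (Definition \ref{def:typeA}), which was exactly what Theorem \ref{thm:Mcpt} needed to conclude compactness of $(M,\widetilde{d}_{res})$ from boundedness of $d_{res}$; this is indicated in the hypotheses. There is no genuine obstacle: once compactness of $M$ is in hand, the result is immediate from the definition of sequential compactness applied to the image of the path under $\gamma$. The remark following the statement is thus accurate: even without assuming $d_{res}(x_i,x_j) \to 0$ along the full path, one can always pass to a convergent subsequence in $M$.
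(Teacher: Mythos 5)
Your proof is correct, and it takes a genuinely different (and shorter) route than the paper. You treat the result as a pure corollary of Theorem \ref{thm:Mcpt}: embed the path into $M$ via $\gamma$, invoke sequential compactness of the compact metric space $(M,\widetilde{d}_{res})$, and you are done; the only hypotheses you need are exactly those that make Theorem \ref{thm:Mcpt} applicable, which the statement itself grants parenthetically. The paper instead re-runs the compactness mechanism by hand at the level of dipole vectors: it considers the functions $v_i := v_{x_i,o}$, uses boundedness of $d_{res}$ together with the reproducing property $v_i(z)=\left\langle v_i,v_z\right\rangle _{\mathscr{H}_{E}}$ and Cauchy--Schwarz to get a uniform bound $\left|v_i\left(z\right)\right|^{2}\leq K$ and a uniform Lipschitz estimate $\left|v_i\left(z\right)-v_i\left(z'\right)\right|^{2}\leq K\,d\left(z,z'\right)$, applies Arzel\`a--Ascoli to extract a subsequence with $v_{i_k}-v_{i_l}\rightarrow0$, and then (implicitly via the type $A$ condition and the identity $d\left(x_{i_k},x_{i_l}\right)=\Vert v_{x_{i_k}}-v_{x_{i_l}}\Vert_{\mathscr{H}_{E}}^{2}$) concludes that $\left(x_{i_k}\right)$ is Cauchy in $\left(V,d_{res}\right)$ and hence converges in the completion $M$. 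What your route buys is modularity and brevity: no duplication of the Arzel\`a--Ascoli argument already embedded in the proof of Theorem \ref{thm:Mcpt}. What the paper's route buys is an explicit, self-contained exhibition of \emph{where} the limit comes from --- convergence of the dipoles in $\mathscr{H}_{E}$ --- which is the form of the argument reused later (e.g., in Remark \ref{rem:rwalk}, where ``the argument from Theorem \ref{thm:cpt1}'' is invoked to extend the Martin kernel to $B$); your abstract compactness argument would not directly supply that extension mechanism.
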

\begin{proof}
(Application of Arzelà\textendash Ascoli) Recall that $v_{i}:=v_{x_{i},o}$,
where 
\[
\left|v_{i}\left(z\right)\right|^{2}=\left|\left\langle v_{i},v_{z}\right\rangle \right|^{2}\leq d\left(i,o\right)d\left(z,o\right)\leq K;
\]
which implies that 
\[
\left|v_{i}\left(z\right)-v_{i}\left(z'\right)\right|^{2}\leq K\: d\left(z,z'\right).
\]
By Arzelà\textendash Ascoli, $\exists$ a subsequence s.t. $v_{i_{k}}-v_{i_{l}}\rightarrow0$
in $\mathscr{H}_{E}$, as $d\left(x_{i_{k}},x_{i_{l}}\right)\xrightarrow[\; k,l\rightarrow\infty]{}0$. 
\end{proof}

\section{Poisson-representations}

Let $G=\left(V,E\right)$ be as above, and let $c:E\rightarrow\mathbb{R}_{+}$
be a fixed conductance function. Let $d=d_{res}$ be the corresponding
resistance metric.

Our standard assumptions on $G,c$ are as outlined in Section \ref{sub:setting}
above.

We assume in addition that 
\begin{enumerate}
\item \label{enu:as1}$\#V=\aleph_{0}$, i.e., countable infinite.
\item \label{enu:as2}$d_{res}$ is bounded on $V\times V$.
\item \label{enu:as3}For all $x\in V$, $\exists\varepsilon=\varepsilon_{x}$
s.t. 
\begin{equation}
\left\{ y\in V\:|\: d\left(x,y\right)<\varepsilon_{x}\right\} =\left\{ x\right\} ,\;\text{the singleton}.\label{eq:p2}
\end{equation}
 
\end{enumerate}
We shall denote by $M$ the metric completion of $\left(V,d_{res}\right)$,
and identify $V$ as a subset of $M$ in the usual way, where 
\begin{equation}
x\in V\longleftrightarrow\underset{(\infty\:\text{repetition of vertex \ensuremath{x}})}{\mbox{class}\left(x,x,x,\cdots\right)\in M}\label{eq:p3}
\end{equation}

\begin{prop}
For $n\in\mathbb{N}$, set $w=\left(z_{1},\cdots,z_{n}\right)$ where
$z_{i}\in V$ (vertices), a finite word, and denote by $\left(w\underline{x}\right)$
the concatenation sequence 
\begin{align}
(z_{1},z_{2},\cdots,z_{n}, & \underset{\text{infinite repetition}}{\underbrace{x,x,x,x,x,x\cdots)};}\label{eq:w1}
\end{align}
we set $\underline{x}=\left(x,x,x,x,\cdots\right)$; then $\gamma\left(x\right)=\left\{ \underline{x}\right\} \cup\left\{ w\underline{x}\right\} $,
as $w$ ranges over all finite words.\end{prop}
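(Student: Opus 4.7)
The plan is to establish the set equality $\gamma(x)=\{\underline{x}\}\cup\{w\underline{x}\}$ by proving both inclusions, with the ``$\supseteq$'' direction being essentially trivial and the ``$\subseteq$'' direction relying crucially on the discreteness assumption \eqref{enu:as3}.

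For the inclusion $\supseteq$: I would observe that any sequence of the form $w\underline{x}=(z_1,\dots,z_n,x,x,x,\dots)$ is eventually constant, hence automatically Cauchy in $(V,d)$, and for $i\geq n+1$ we have $d((w\underline{x})_i,x)=d(x,x)=0$. Therefore $\lim_i d((w\underline{x})_i,x)=0$, which by definition of the equivalence relation \eqref{eq:md2} means $w\underline{x}\sim\underline{x}$, so $w\underline{x}$ lies in the class $\gamma(x)$. The case $w=\emptyset$ gives $\underline{x}$ itself.

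For the inclusion $\subseteq$: Suppose $(a_i)\subset V$ is a Cauchy sequence with $(a_i)\sim\underline{x}$, i.e.\ $\lim_{i\to\infty}d(a_i,x)=0$. Here I would invoke assumption \eqref{enu:as3}, which furnishes an $\varepsilon_x>0$ such that
\[
\{y\in V\::\: d(x,y)<\varepsilon_x\}=\{x\}.
\]
Since $d(a_i,x)\to 0$, there exists $N\in\mathbb{N}$ with $d(a_i,x)<\varepsilon_x$ for every $i\geq N$. Combining this with the displayed identity forces $a_i=x$ for all $i\geq N$. Setting $w:=(a_1,\dots,a_{N-1})$ (and interpreting this as the empty word if $N=1$) exhibits $(a_i)$ in the form $w\underline{x}$.

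The main (only) obstacle is really a conceptual one: the proof hinges entirely on hypothesis \eqref{enu:as3}, i.e.\ that each vertex is an isolated point of $M$. Without this, a Cauchy sequence converging to $\gamma(x)$ could approach $x$ through infinitely many distinct vertices without ever being eventually equal to $x$, and the claim would fail. Given that the hypothesis is in force, the argument is a one-line application of the definition of the equivalence class in $M$.
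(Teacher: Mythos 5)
Your proof is correct and follows the same approach as the paper: the paper's own (terser) argument likewise invokes assumption (3) (isolation of vertices, equation \eqref{eq:p2}) to force any sequence equivalent to $\underline{x}$ to equal $x$ eventually, from which the claimed form $w\underline{x}$ follows. Your write-up merely makes explicit the easy $\supseteq$ inclusion and the $\varepsilon_x$--$N$ step that the paper leaves implicit.
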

\begin{proof}
If $\left(y_{i}\right)_{i=1}^{\infty}$ is a sequence of vertices
s.t. $\lim_{i\rightarrow\infty}d\left(y_{i},x\right)=0$, then, since
$x$ is isolated by (\ref{enu:as3}), see (\ref{eq:p2}), there must
be a $n\in\left\{ 0,1,2,\cdots\right\} $ such that $y_{i}=x$ for
all $i\geq n$; and the desired conclusion follows.\end{proof}
\begin{thm}
\label{thm:poisson}Let $G=\left(V,E\right)$, $c$, $d_{res}$ satisfying
the conditions above, including (\ref{enu:as1})-(\ref{enu:as3})
(so $d_{res}$ is bounded). Then 
\begin{equation}
B:=M\backslash V\label{eq:p4}
\end{equation}
is closed in $M$; and for every $x\in V$, there is a Borel probability
measure $\mu_{x}$ on $B$, i.e., $\mu_{x}\in M_{1}\left(B\right)$
such that, for all harmonic functions $h$ on $V$ with $\left\Vert h\right\Vert _{\mathscr{H}_{E}}<\infty$,
we have 
\begin{equation}
h\left(x\right)=\int_{B}\widetilde{h}\left(b\right)d\mu_{x}\left(b\right)\label{eq:p5}
\end{equation}
where $\widetilde{h}$ is the extension $\in C\left(M\right)$ of
$h$, obtained by metric completion, and where the function on the
RHS in (\ref{eq:p5}) is $\widetilde{h}\big|_{B}$.\end{thm}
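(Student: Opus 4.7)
The plan is to deduce the theorem in three stages: (i) show $B$ is closed by proving that every vertex is an isolated point of $M$; (ii) establish a global maximum/minimum principle for finite-energy harmonic functions on the compact space $M$; and (iii) construct $\mu_{x}$ by writing $h\mapsto h(x)$ as a positive norm-one functional on a subspace of $C(B)$, then invoking Hahn--Banach and the Riesz--Markov representation.

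For (i), I use (\ref{enu:as3}) to show that $\{x\}$ is open in $M$ for each $x\in V$: if $b\in M$ satisfies $\widetilde{d}(x,b)<\varepsilon_{x}$ and $(y_{i})\subset V$ is a Cauchy sequence representing $b$, then $d(x,y_{i})\to\widetilde{d}(x,b)<\varepsilon_{x}$, so $y_{i}=x$ eventually, hence $b=\gamma(x)$. Thus $V$ is open in $M$, and $B=M\setminus V$ is closed. Combined with the compactness of $M$ (Theorem~\ref{thm:Mcpt}), $B$ is a compact Hausdorff space, which is the setting needed for Riesz--Markov.

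For (ii), the claim is that for every harmonic $h\in\mathscr{H}_{E}$,
\[
\sup_{V}h=\max_{B}\widetilde{h},\qquad \inf_{V}h=\min_{B}\widetilde{h}.
\]
I pick vertices $x_{n}\in V$ with $h(x_{n})\to\sup_{V}h$ and, using compactness of $M$, extract a subsequence $x_{n_{k}}\to b\in M$. If $b\in B$, continuity of $\widetilde{h}$ gives $\widetilde{h}(b)=\sup_{V}h$. If $b=y\in V$, the openness of $\{y\}$ forces $x_{n_{k}}=y$ eventually, so $h(y)=\sup_{V}h$; then the mean-value identity $h(y)=\sum_{z\sim y}p_{yz}h(z)$ combined with $h(z)\le h(y)$ forces $h(z)=h(y)$ for every neighbor, and by the connectedness axiom (\ref{enu:net3}) $h$ is constant on $V$, so $\widetilde{h}$ is constant on $M$ and the identity still holds. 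The analogous argument handles the infimum.

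For (iii), set $\mathcal{H}:=\{h\in\mathscr{H}_{E}:\Delta h=0\}$ and define $L_{x}$ on the subspace $\mathcal{H}\big|_{B}:=\{\widetilde{h}\big|_{B}:h\in\mathcal{H}\}\subset C(B)$ by $L_{x}(\widetilde{h}\big|_{B}):=h(x)$. The maximum principle makes $L_{x}$ well-defined (if $\widetilde{h}\big|_{B}\equiv 0$ then $\sup_{V}h=\inf_{V}h=0$, so $h\equiv 0$), positive ($\widetilde{h}\big|_{B}\ge 0\Rightarrow \inf_{V}h\ge 0$), normalized ($L_{x}(\mathbbm{1})=1$), and of operator norm $1$. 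Hahn--Banach extends $L_{x}$ to $\widetilde{L}_{x}\in C(B)^{*}$ with $\|\widetilde{L}_{x}\|=1$; since $\widetilde{L}_{x}(\mathbbm{1})=1=\|\widetilde{L}_{x}\|$, the extension is automatically positive, and Riesz--Markov yields a regular Borel probability measure $\mu_{x}$ on $B$ with $\widetilde{L}_{x}(f)=\int_{B}f\,d\mu_{x}$. Restricting to $h\in\mathcal{H}$ gives the desired formula (\ref{eq:p5}).

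The main obstacle is step (ii): one must handle the case where the extremal sequence accumulates at a vertex $y\in V$ rather than at the boundary, and here the argument is forced to combine compactness of $M$, the isolation from (\ref{enu:as3}), the harmonic mean-value identity, and graph connectedness. Everything else is a standard functional-analytic packaging once the maximum principle is in hand.
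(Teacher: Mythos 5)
Your proposal is correct and follows essentially the same route as the paper's proof: closedness of $B$ via the isolation condition (\ref{enu:as3}), a maximum/minimum principle identifying $\sup_{V}|h|$ with $\Vert\widetilde{h}\big|_{B}\Vert_{\infty}$, the point-evaluation functional on restrictions of harmonic functions to $B$, a positive extension to $C(B)$, and Riesz representation. The only differences are expository: you prove the maxmin principle in detail (the paper calls it ``a simple maxmin-principle'' and omits the argument handling accumulation at an isolated vertex), and you obtain positivity of the extension from the norm-one-plus-unital trick after Hahn--Banach, whereas the paper invokes the Banach--Krein positive extension theorem directly.
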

\begin{proof}
By Corollary \ref{cor:fext}, every $f\in\mathscr{H}_{E}$ has a unique
continuous extension $\widetilde{f}$ to $M$; and 
\begin{equation}
\left|\widetilde{f}\left(b\right)-\widetilde{f}\left(b'\right)\right|^{2}\leq d\left(b,b'\right)\left\Vert f\right\Vert _{\mathscr{H}_{E}}^{2}\label{eq:p6}
\end{equation}
holds for $\forall b,b'\in M$. By (\ref{enu:as3}), (Section \ref{sub:mcom}),
$V$ identifies as an open subset in $M$, and so $B=M\backslash V$
is closed; and therefore compact. Recall $M$ is compact by Theorem
\ref{thm:Mcpt}.

Recall from Section \ref{sub:setting}, that a function $h$ on $V$
is harmonic iff $\mathbb{P}h=h$, where 
\begin{equation}
\left(\mathbb{P}h\right)\left(x\right)=\sum_{y\sim x}p_{xy}h\left(y\right)\label{eq:p8a}
\end{equation}
and $p_{xy}:=c_{xy}/c\left(x\right)$, for $\forall\left(xy\right)\in E$.
Also recall, $\left(\Delta f\right)\left(x\right)=\sum_{y\sim x}c_{xy}\left(f\left(x\right)-f\left(y\right)\right)$. 

Hence the harmonic functions $h$ in $\mathscr{H}_{E}\left(\subset C\left(M\right)\right)$
satisfy
\begin{equation}
\sup_{x\in V}\left|h\left(x\right)\right|=\left\Vert \widetilde{h}\big|_{B}\right\Vert _{\infty}.\label{eq:p8b}
\end{equation}
This is an application of (\ref{eq:p8a}) and a simple maxmin-principle. 

Now set $\mathcal{A}\subset C\left(B\right)$ as follows:
\begin{equation}
\mathcal{A}=\left\{ \widetilde{h}\big|_{B}\:;\:\mathbb{P}h=h,\: h\in\mathscr{H}_{E}\right\} \label{eq:9}
\end{equation}
where ``$\big|_{B}$'' denotes restriction; then, for every $x\in V$,
the point-evaluation mapping:
\begin{equation}
\mathcal{A}\ni\widetilde{h}\big|_{B}\longmapsto h\left(x\right)\label{eq:9b}
\end{equation}
defines a positive linear functional. Since $\mathbb{P}\left(\mathbbm{1}\right)=1$
where $\mathbbm{1}$ is the constant one function, it follows that
$\mathbbm{1}\in\mathcal{A}$, and that $\mathbbm{1}\mapsto1$ in (\ref{eq:9b})
(i.e., the functional in (\ref{eq:9b}) attains value 1 on the constant
function \textquotedblleft one.\textquotedblright )

By the extension theorem of Banach and Krein, there is a positive
linear functional on all of $C\left(B\right)$ which extends (\ref{eq:9b})
from $\mathcal{A}$. By Riesz' theorem, it is given by a unique probability
measure $\mu_{x}\in M_{1}\left(B\right)$. Restricting to $\mathcal{A}$,
and using (\ref{eq:p8b}), we get the desired formula (\ref{eq:p5});
i.e., $\mu_{x}$ is the Poisson-kernel, and $B$ is a Poisson-boundary,
i.e., it reproduces the harmonic functions in $\mathscr{H}_{E}$.
\end{proof}

\section{Continuous vs discrete: Examples}
\begin{rem}
\label{rem:sp}The orthogonal splitting 
\begin{equation}
\mathscr{H}_{E}=Harm\oplus\overline{span}^{\tiny\mathscr{H}_{E}\mbox{-norm}}\left\{ \delta_{x}\:\big|\: x\in V\right\} \label{eq:cd1}
\end{equation}
is often called the \emph{Royden-decomposition} (see e.g., \cite{MR674604,MR0225999}).
There is a continuous analogy: 

Let $\Omega\subset\mathbb{R}^{d}$ be bounded, and set 
\begin{equation}
\begin{split} & H^{1}\left(\Omega\right)=\left\{ f\:\mbox{on}\:\Omega\:\big|\:\nabla f\in L^{2}\left(\Omega\right)\right\} /\mbox{constants}\\
 & \left\Vert f\right\Vert _{H^{1}\left(\Omega\right)}^{2}=\int_{\Omega}\left|\nabla f\right|^{2}dx,
\end{split}
\label{eq:cd2}
\end{equation}
and set
\begin{equation}
Harm_{\Omega}=\left\{ f\in H^{1}\left(\Omega\right)\:\big|\:\Delta f=0\right\} .\label{eq:cd3}
\end{equation}
Then 
\[
H^{1}\left(\Omega\right)=Harm{}_{\Omega}\oplus\overline{C_{c}^{\infty}\left(\Omega\right)}^{\tiny H_{1}\left(\Omega\right)\mbox{-closure}};
\]
i.e., we have the implication
\[
\boxed{f\in H^{1}\left(\Omega\right),\:\mbox{and}\;\left\langle f,\varphi\right\rangle _{H^{1}\left(\Omega\right)}=0,\:\forall\varphi\in C_{c}^{\infty}\left(\Omega\right)}\Longrightarrow\Delta f=0.
\]

\end{rem}

\subsection{Continuous models}
\begin{lem}
Let 
\begin{align}
\mathscr{H}_{E} & =\left\{ f:\mathbb{R}\rightarrow\mathbb{C}\;|\;\text{measurable},\: f\in L^{2},f'\in L^{2}\right\} \label{eq:e1}\\
\left\Vert f\right\Vert _{\mathscr{H}_{E}}^{2} & =\frac{1}{2}\left(\int_{\mathbb{R}}\left|f\right|^{2}+\int_{\mathbb{R}}\left|f'\right|^{2}\right);\label{eq:e2}
\end{align}
where $f'$ in (\ref{eq:e1}) denotes the weak-derivative of $f$. 
\begin{enumerate}[label=(\roman{enumi}),ref=\roman{enumi}]
\item Then $\mathscr{H}_{E}$ is a reproducing kernel Hilbert space (RKHS)
consisting of bounded continuous functions. 
\item \label{enu:c2}Moreover, $\mathscr{H}_{E}$ is an algebra under pointwise
product with 
\[
\left\Vert fg\right\Vert _{\mathscr{H}_{E}}\leq\text{Const}\left\Vert f\right\Vert _{\mathscr{H}_{E}}\left\Vert g\right\Vert _{\mathscr{H}_{E}},\quad\forall f,g\in\mathscr{H}_{E}.
\]

\end{enumerate}
\end{lem}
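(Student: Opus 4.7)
The plan is to identify $\mathscr{H}_{E}$ (up to the factor $\tfrac12$) with the classical Sobolev space $H^{1}(\mathbb{R})$ and then read off everything via Plancherel: the reproducing kernel for (i), and the one-dimensional Leibniz rule for (ii).

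First I would pass to the Fourier side. Using Plancherel together with $\widehat{f'}(\xi)=i\xi\,\hat{f}(\xi)$ (legitimate since $f,f'\in L^{2}$), the energy norm becomes
\[
\|f\|_{\mathscr{H}_{E}}^{2}=c_{0}\int_{\mathbb{R}}(1+\xi^{2})\,|\hat{f}(\xi)|^{2}\,d\xi,
\]
with $c_{0}$ an absolute constant determined by the Fourier convention. This gives a unitary isomorphism of $\mathscr{H}_{E}$ onto a weighted $L^{2}$-space, whence $\mathscr{H}_{E}$ is complete and $\mathcal{S}(\mathbb{R})$ is dense in $\mathscr{H}_{E}$.

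To identify the reproducing kernel I would solve $\langle K_{x},f\rangle_{\mathscr{H}_{E}}=f(x)$ on the Fourier side; this forces $\widehat{K_{x}}(\xi)$ to be a constant multiple of $e^{-ix\xi}/(1+\xi^{2})$. Using the elementary identity
\[
\int_{\mathbb{R}}e^{-|u|}\,e^{-iu\xi}\,du=\frac{2}{1+\xi^{2}},
\]
inversion yields the Poisson-type kernel $K(x,y)=K_{x}(y)=c_{1}\,e^{-|x-y|}$, which clearly belongs to $\mathscr{H}_{E}$. After verifying $\langle K_{x},f\rangle_{\mathscr{H}_{E}}=f(x)$ for $f\in\mathcal{S}(\mathbb{R})$ directly via Plancherel, one extends it to all of $\mathscr{H}_{E}$ by density. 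Cauchy--Schwarz then gives $|f(x)|^{2}\le K(x,x)\,\|f\|_{\mathscr{H}_{E}}^{2}$, so $f$ is bounded by a constant multiple of its $\mathscr{H}_{E}$-norm, and the norm-continuity of $x\mapsto K_{x}$ in $\mathscr{H}_{E}$ transfers (again by the reproducing identity) to continuity of $f$. This settles (i).

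For (ii) I would use the $L^{\infty}$-bound from (i) together with the distributional Leibniz rule $(fg)'=f'g+fg'$, which is valid because $f,g\in L^{\infty}\cap L^{2}$ and $f',g'\in L^{2}$ (each summand then lies in $L^{2}$). The two pieces are estimated by
\[
\|fg\|_{L^{2}}^{2}\le\|f\|_{\infty}^{2}\|g\|_{L^{2}}^{2},\qquad\|(fg)'\|_{L^{2}}^{2}\le 2\|g\|_{\infty}^{2}\|f'\|_{L^{2}}^{2}+2\|f\|_{\infty}^{2}\|g'\|_{L^{2}}^{2}.
\]
Summing and substituting $\|\cdot\|_{\infty}\le C_{0}\|\cdot\|_{\mathscr{H}_{E}}$ from (i) while absorbing the remaining $L^{2}$-pieces into the $\mathscr{H}_{E}$-norm yields the desired multiplicative estimate $\|fg\|_{\mathscr{H}_{E}}\le C\,\|f\|_{\mathscr{H}_{E}}\|g\|_{\mathscr{H}_{E}}$. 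The only mildly delicate point in the whole argument is the density of $\mathcal{S}(\mathbb{R})$ in $\mathscr{H}_{E}$, needed to upgrade the reproducing identity from Schwartz test functions to the whole space; this is immediate once Step 1 places us on the Fourier side, and no serious analytic obstacle arises.
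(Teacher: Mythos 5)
Your proposal is correct, but it reaches the result by a genuinely different route than the paper. For (i), the paper does not derive the kernel; it simply posits $K_{x}\left(y\right)=e^{-\left|x-y\right|}$ and verifies the reproducing identity $\left\langle K_{x},f\right\rangle _{\mathscr{H}_{E}}=\frac{1}{2}\left(\int_{\mathbb{R}}K_{x}f+\int_{\mathbb{R}}K_{x}'f'\right)=f\left(x\right)$ directly in physical space, the entire content being the distributional identity $K_{x}''=K_{x}-2\delta_{x}$ (integration by parts converts $\frac{1}{2}\int K_{x}'f'$ into $-\frac{1}{2}\int K_{x}''f=f\left(x\right)-\frac{1}{2}\int K_{x}f$, and the two bulk terms cancel). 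Your Plancherel route instead identifies $\mathscr{H}_{E}$ with the Sobolev space $H^{1}\left(\mathbb{R}\right)$ and solves for the kernel in frequency space; this costs the Fourier machinery but buys three things: the kernel is derived rather than guessed (with the paper's normalization your constant is forced to be $c_{1}=1$, matching the paper exactly), completeness of $\mathscr{H}_{E}$ and density of $\mathcal{S}\left(\mathbb{R}\right)$ come for free, and your extension-by-density step makes explicit the construction of the continuous representative of an $L^{2}$-class, which the paper's ``one checks'' glosses over. For (ii) the difference is starker: the paper offers no argument at all, citing \cite{Jor81}, whereas you give a complete, self-contained proof from the sup-norm bound $\left\Vert f\right\Vert _{\infty}\leq C_{0}\left\Vert f\right\Vert _{\mathscr{H}_{E}}$ of part (i) together with the distributional Leibniz rule $\left(fg\right)'=f'g+fg'$ (valid for bounded $H^{1}$ functions); this is in fact the same mechanism --- split the derivative of a product, pull out sup-norms, absorb the rest into the energy norm --- that the paper writes out in its discrete analogue, the proof of Theorem \ref{thm:algebra}, so your argument supplies for the continuous model exactly what the paper delegates to a reference.
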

\begin{proof}
$\mathscr{H}_{E}$ is a RKHS with kernel 
\begin{equation}
K\left(x,y\right)=e^{-\left|x-y\right|}.\label{eq:e3}
\end{equation}
To see this, set 
\begin{equation}
K_{x}\left(\cdot\right)=e^{-\left|x-\cdot\right|},\label{eq:e4}
\end{equation}
and one checks that 
\begin{align*}
\left\langle K_{x},f\right\rangle _{\mathscr{H}_{E}} & =\frac{1}{2}\left(\int_{\mathbb{R}}K_{x}f+\int_{\mathbb{R}}K_{x}'f'\right)=f\left(x\right),\quad\forall f\in\mathscr{H}_{E},\;\forall x\in\mathbb{R}.
\end{align*}

For a proof of part (\ref{enu:c2}), see \cite{Jor81}.

Note that $K_{x}''=K_{x}-2\delta_{x}$ in the sense of distribution.
See Fig \ref{fig:esp}.
\end{proof}
\begin{figure}[H]
\begin{tabular}{cc}
\includegraphics[width=0.3\columnwidth]{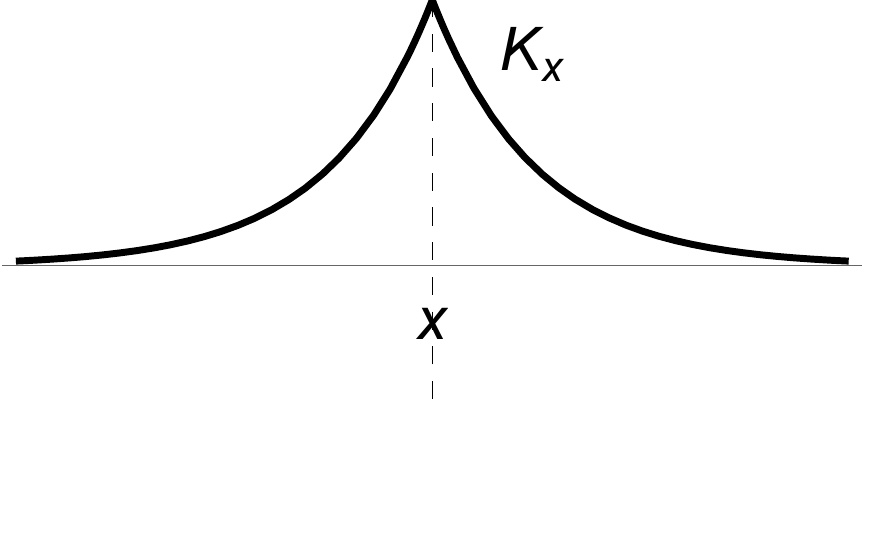} & \includegraphics[width=0.3\columnwidth]{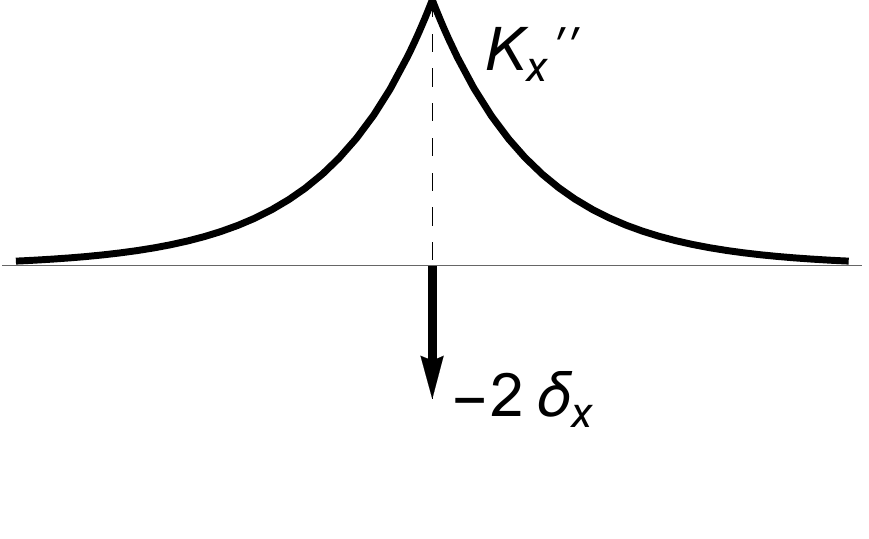}\tabularnewline
\includegraphics[width=0.3\columnwidth]{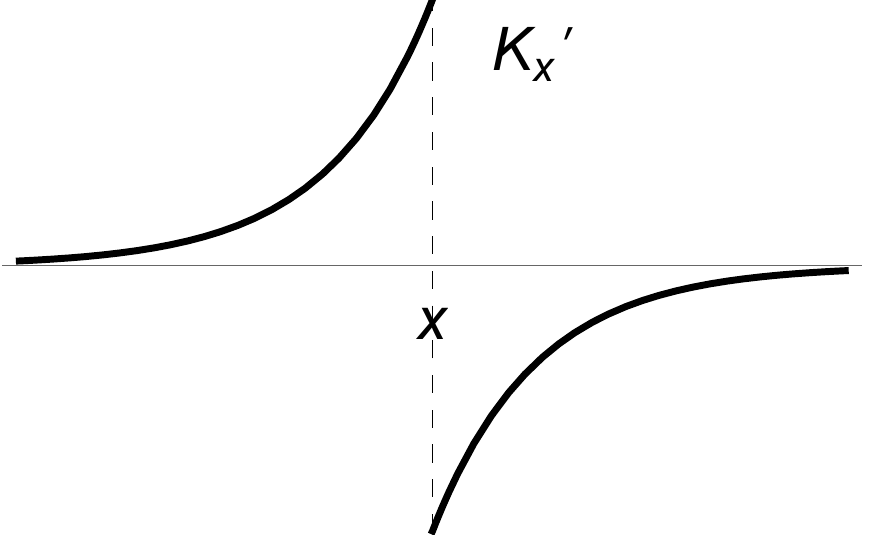} & \tabularnewline
\end{tabular}

\protect\caption{\label{fig:esp}$K_{x}''=K_{x}-2\delta_{x}$}
\end{figure}

\begin{lem}
If $f\in\mathscr{H}_{E}$, then $f$ is bounded, and $\lim_{\left|x\right|\rightarrow\infty}f\left(x\right)=0.$\end{lem}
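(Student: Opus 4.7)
My plan is to read off both assertions from the reproducing kernel structure already established, plus a standard ``uniformly continuous $+$ $L^2$'' argument.

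\smallskip

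First, for boundedness, I would use the RKHS identity $f(x)=\langle K_x,f\rangle_{\mathscr{H}_E}$ together with Cauchy--Schwarz to get
\[
|f(x)|^2 \;\le\; \|K_x\|_{\mathscr{H}_E}^2\,\|f\|_{\mathscr{H}_E}^2 \;=\; K(x,x)\,\|f\|_{\mathscr{H}_E}^2 \;=\; \|f\|_{\mathscr{H}_E}^2,
\]
since $K(x,x)=e^{-|x-x|}=1$ by \eqref{eq:e3}. Thus $\|f\|_\infty\le\|f\|_{\mathscr{H}_E}$, proving the first claim.

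\smallskip

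Next, for decay at infinity, I would first upgrade this to uniform continuity. Since the reproducing kernel satisfies
\[
\|K_x-K_y\|_{\mathscr{H}_E}^2 \;=\; K(x,x)+K(y,y)-2K(x,y) \;=\; 2\bigl(1-e^{-|x-y|}\bigr),
\]
Cauchy--Schwarz gives $|f(x)-f(y)|^2 \le 2\bigl(1-e^{-|x-y|}\bigr)\,\|f\|_{\mathscr{H}_E}^2$, which tends to $0$ uniformly in $x,y$ as $|x-y|\to 0$. Hence every $f\in\mathscr{H}_E$ is uniformly continuous on $\mathbb{R}$.

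\smallskip

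The main step (and the one requiring a genuine argument) is to combine uniform continuity with $f\in L^2(\mathbb{R})$ to conclude $f(x)\to 0$ as $|x|\to\infty$. Suppose, for contradiction, that there exist $\varepsilon>0$ and a sequence $|x_n|\to\infty$ with $|f(x_n)|\ge \varepsilon$. By uniform continuity, pick $\delta>0$ so that $|s-t|<\delta$ implies $|f(s)-f(t)|<\varepsilon/2$. Passing to a subsequence we may assume the intervals $I_n:=(x_n-\delta,x_n+\delta)$ are pairwise disjoint, and on each of them $|f|\ge \varepsilon/2$. Then
\[
\|f\|_{L^2}^2 \;\ge\; \sum_{n} \int_{I_n} |f|^2 \;\ge\; \sum_{n} 2\delta\,(\varepsilon/2)^2 \;=\; \infty,
\]
contradicting $f\in L^2(\mathbb{R})$. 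Therefore $\lim_{|x|\to\infty} f(x)=0$, completing the proof.
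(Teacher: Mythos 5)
Your proof is correct, but it takes a genuinely different route from the paper's. The paper handles both assertions at once by Fourier analysis: for $f\in\mathscr{H}_{E}$ one has $(1+\xi^{2})^{1/2}\hat{f}\in L^{2}(\mathbb{R})$, hence $\hat{f}\in L^{1}(\mathbb{R})$ by Cauchy--Schwarz, and the inversion formula $f(x)=\frac{1}{2\pi}\int_{\mathbb{R}}e^{ix\xi}\hat{f}(\xi)\,d\xi$ yields $\sup_{x}|f(x)|\leq\|f\|_{\mathscr{H}_{E}}$ together with $f(x)\rightarrow0$ as $|x|\rightarrow\infty$ by the Riemann--Lebesgue lemma. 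You instead argue entirely on the spatial side, using the reproducing kernel $K_{x}(\cdot)=e^{-|x-\cdot|}$ from the preceding lemma: Cauchy--Schwarz with $K(x,x)=1$ gives the same bound $\|f\|_{\infty}\leq\|f\|_{\mathscr{H}_{E}}$; the identity $\|K_{x}-K_{y}\|_{\mathscr{H}_{E}}^{2}=2\left(1-e^{-|x-y|}\right)$ (which is exactly \eqref{eq:e6}) gives uniform continuity; and the decay then follows from the real-variable fact that a uniformly continuous $L^{2}$ function must vanish at infinity --- your disjoint-intervals contradiction proves this correctly, since the subsequence extraction, the lower bound $|f|\geq\varepsilon/2$ on each $I_{n}$, and the divergent sum are all in order, and $f\in L^{2}$ is indeed part of the definition \eqref{eq:e1}. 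As for what each approach buys: the paper's argument is shorter and exploits the translation invariance of $\mathbb{R}$, but it is tied to the Fourier transform; yours is more elementary and more portable, since it relies only on kernel estimates in the spirit of the Lipschitz bound \eqref{eq:gg3} that runs through the discrete part of the paper, and so it would survive in weighted or non-translation-invariant settings where no Fourier transform is available.
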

\begin{proof}
This follows from Riemann-Lebesgue, since 
\[
f\left(x\right)=\frac{1}{2\pi}\int_{\mathbb{R}}e^{ix\xi}\hat{f}\left(\xi\right)d\xi,\;\text{and}\;\hat{f}\in L^{1}\left(\mathbb{R}\right)
\]
with $\sup_{x\in\mathbb{R}}\left|f\left(x\right)\right|\leq\left\Vert f\right\Vert _{\mathscr{H}_{E}}$,
for all $f\in\mathscr{H}_{E}$.
\end{proof}
The resistance distance in this case is
\begin{equation}
\begin{split} & d\left(x,y\right)=\left\Vert K_{x}-K_{y}\right\Vert _{\mathscr{H}_{E}}^{2}=2\left(1-e^{-\left|x-y\right|}\right),\;\mbox{and}\\
 & \sup_{x,y\in\mathbb{R}}d\left(x,y\right)\leq2.
\end{split}
\label{eq:e6}
\end{equation}
Hence the resistance metric $d$ in (\ref{eq:e6}) is bounded on $\mathbb{R}$,
and the completion of $\mathbb{R}$ with respect to $d$ is the one-point
compactification of $\mathbb{R}$, but for \emph{discrete models}:

\subsection{Discrete Models}

Let $G=\left(V,E,c\right)$ be a weighted graph, with vertex-set $V$,
edges $E$, and a fixed conductance function $c$. Let $d=d_{res}$
be the resistance metric, and we study the metric completion of $G$.

For functions on the $\mathbb{Z}$-lattice $L_{d}:=\mathbb{Z}^{d}$,
$d\geq1$, fixed, set 
\begin{equation}
\left\Vert f\right\Vert _{\mathscr{H}_{E}}^{2}=\frac{1}{2}\underset{x\sim y}{\sum\sum}e^{\left|x-y\right|}\left|f\left(x\right)-f\left(y\right)\right|^{2};\label{eq:c0}
\end{equation}
where $x\sim y$, $x\neq y$, in (\ref{eq:c0}) denotes \emph{nearest
neighbors}; and $x=\left(x_{1},x_{2},\cdots,x_{d}\right)\in\mathbb{Z}^{d}$.
(See Fig \ref{fig:lattice} for the case of $d=2$ and $d=3$.) Let
\begin{equation}
\mathscr{H}_{E}=\left\{ f\;\mbox{on}\;\mathbb{Z}^{d}\:|\:\left\Vert f\right\Vert _{\mathscr{H}_{E}}<\infty\right\} \label{eq:c1}
\end{equation}
we set 
\begin{equation}
\left|x-y\right|:=\left(\sum_{i=1}^{d}\left|x_{i}-y_{i}\right|^{2}\right)^{\frac{1}{2}},\quad\forall x,y\in\mathbb{Z}^{d}.\label{eq:c2}
\end{equation}

\begin{figure}
\begin{tabular}{cc}
\includegraphics[width=0.3\textwidth]{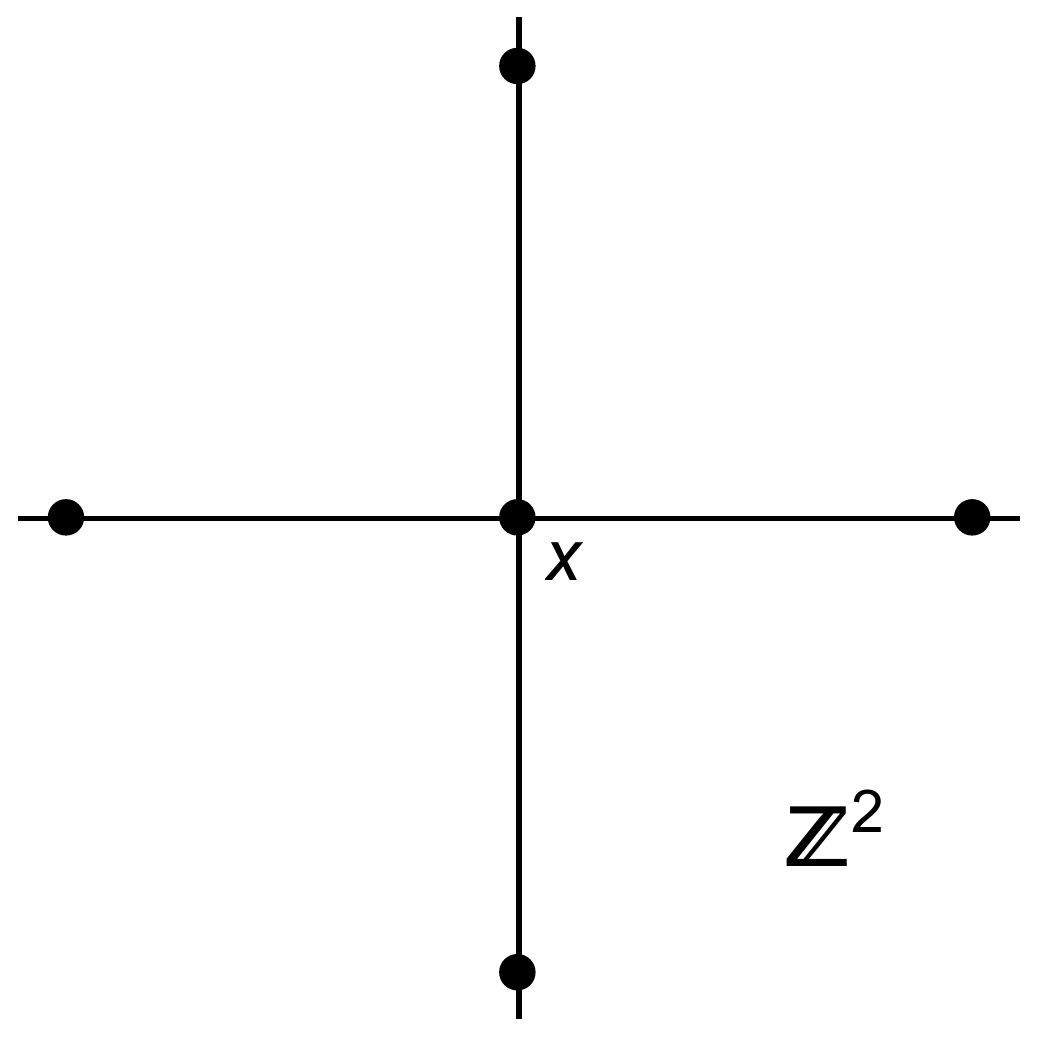} & \includegraphics[width=0.3\textwidth]{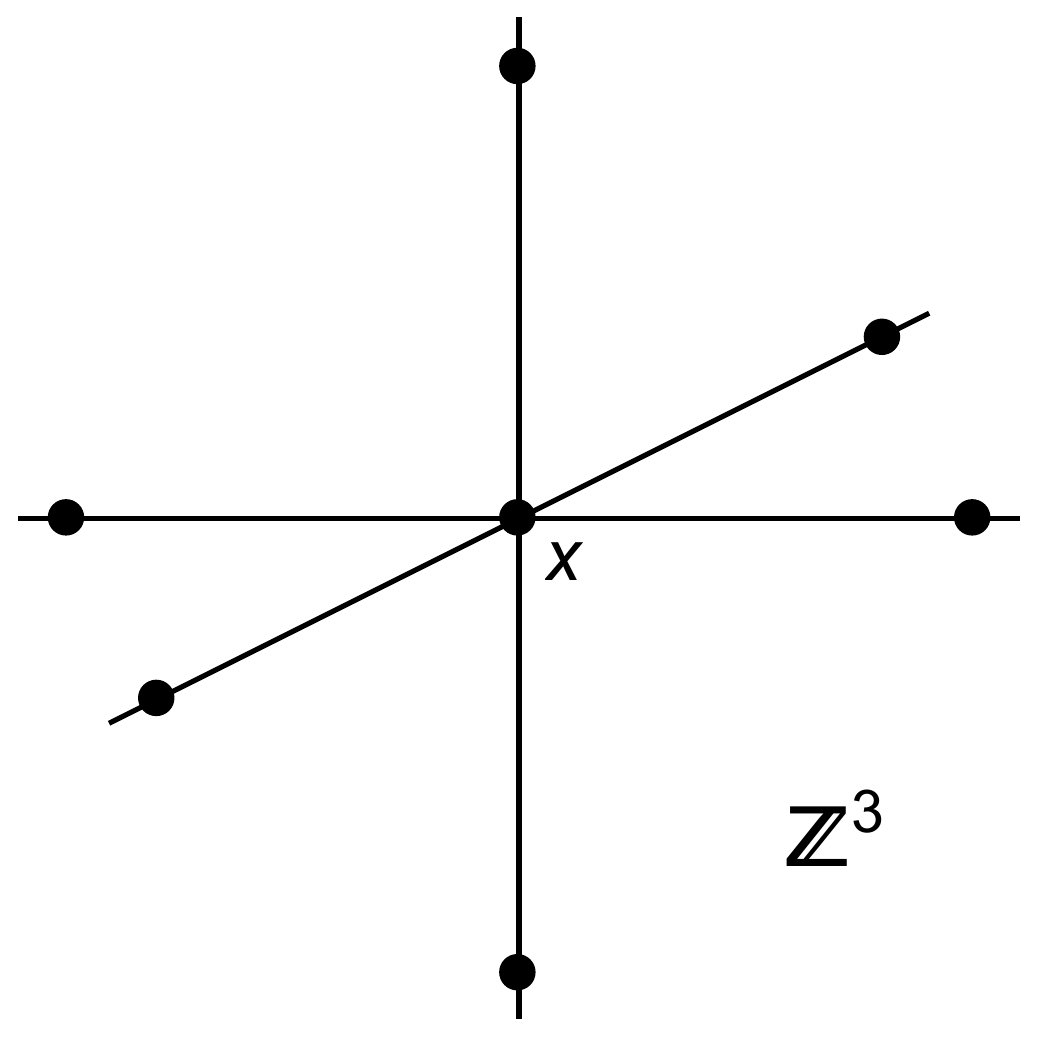}\tabularnewline
\end{tabular}

\protect\caption{\label{fig:lattice}Nearest neighbors for the lattices $\mathbb{Z}^{d}$,
for $d=2$ and $d=3$.}

\end{figure}

\begin{lem}
For $\forall x\in\mathbb{Z}^{d}$, we have the following: $\exists K=K_{x}<\infty$
s.t. 

\begin{equation}
\left|f\left(x\right)-f\left(y\right)\right|^{2}\leq K_{x}\left\Vert f\right\Vert _{\mathscr{H}_{E}}^{2}\;(\mbox{\textup{see Theorem \ref{thm:metric}}}.)\label{eq:c3}
\end{equation}
\end{lem}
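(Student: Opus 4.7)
The plan is to reduce the claim to the boundedness of the resistance distance emanating from $x$, via Theorem \ref{thm:metric}. That theorem contains the identity
$$d_{res}(x,y) \;=\; \inf\bigl\{\,K > 0 \,:\, |w(x)-w(y)|^2 \le K\,\|w\|_{\mathscr{H}_E}^2 \text{ for all } w \in \mathscr{H}_E\,\bigr\},$$
so the Lipschitz-type bound (\ref{eq:c3}) holds automatically with the pair-dependent constant $d_{res}(x,y)$. The full content of the lemma is therefore
$$K_x \;:=\; \sup_{y \in \mathbb{Z}^d} d_{res}(x,y) \;<\; \infty \qquad \text{for each fixed } x \in \mathbb{Z}^d.$$

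To verify this finiteness, I would use the dual flow description from Theorem \ref{thm:metric}, namely $d_{res}(x,y) = \min\bigl\{\|I\|_{Dissp}^2 \,:\, I \text{ a unit } x \to y \text{ flow}\bigr\}$, and construct, for each $y$, an explicit unit flow $I_y$ whose dissipation is controlled uniformly in $y$. A natural candidate on the lattice $\mathbb{Z}^d$ is a shell-spreading flow: the unit current emitted at $x$ is distributed across the $\Theta(n^{d-1})$ edges of the $n$-th coordinate shell around $x$, and then collected again at $y$. Combined with the conductance weights $c_{uv} = e^{|u-v|}$ from (\ref{eq:c0}), the per-shell contribution to the total dissipation is bounded by a convergent series whose sum depends only on $x$, not on $y$.

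The main obstacle will be making this shell-routing construction rigorous: verifying Kirchhoff's conservation law at every intermediate vertex, and establishing that the tail of the dissipation series past the shell containing $y$ remains $o(1)$ as $|x - y| \to \infty$. The exponential growth of the conductance weights in (\ref{eq:c0}) is what secures the required decay, allowing the flow to be diffused across arbitrarily distant shells at negligible additional energy cost. Once the resulting upper bound on $\|I_y\|_{Dissp}^2$ is seen to be uniform in $y$, the supremum $K_x$ is finite, and the claimed inequality follows from the variational identity in Theorem \ref{thm:metric}.
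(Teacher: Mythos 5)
Your opening reduction is fine: by Theorem \ref{thm:metric}, inequality (\ref{eq:c3}) holds for each fixed pair with constant $d_{res}(x,y)$, so the whole content of the lemma is $K_x:=\sup_{y}d_{res}(x,y)<\infty$; and by the flow characterization in the same theorem, any explicit unit flow from $x$ to $y$ yields an upper bound on $d_{res}(x,y)$. The genuine gap is in your convergence step. You justify summability of the per-shell dissipation by ``the exponential growth of the conductance weights in (\ref{eq:c0})'', but (\ref{eq:c0}) places the weight $e^{|u-v|}$ on \emph{nearest-neighbor} edges, where $|u-v|=1$; read literally, the conductance is the constant $e$ on every edge, and nothing grows. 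Under that reading your shell flow dissipates $\asymp n^{-(d-1)}$ on the $n$-th shell, which is summable only for $d\geq3$; in fact for $d=1,2$ with constant conductance the uniform-in-$y$ statement is simply false (for $d=1$ take $f$ with increments $f(n)-f(n+1)=\frac{1}{n+1}$: then $\|f\|_{\mathscr{H}_E}<\infty$ while $|f(0)-f(N)|\rightarrow\infty$). So the assertion ``bounded by a convergent series whose sum depends only on $x$'' does not follow from what you cite.

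What makes the lemma true is the growth that the surrounding examples actually use (Fig \ref{fig:z1c}, Example \ref{exa:Zlat}): conductances of order $e^{|x|}$, growing exponentially in the distance of the edge from the origin, so that resistances decay geometrically along rays. Once that is in place, the shell-routing machinery (and the Kirchhoff bookkeeping you flag as the main obstacle) is unnecessary: send unit current along a single monotone lattice path $P$ from $x$ to $y$; Cauchy--Schwarz along the path gives
\begin{equation*}
\left|f\left(x\right)-f\left(y\right)\right|^{2}\leq\Bigl(\sum_{e\in P}c_{e}^{-1}\Bigr)\left\Vert f\right\Vert _{\mathscr{H}_{E}}^{2},
\end{equation*}
and $\sum_{e\in P}c_{e}^{-1}$ is dominated by a geometric series whose sum is bounded independently of $y$. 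This one-path chaining of the nearest-neighbor estimate is precisely the ``standard'' argument the paper's (very terse) proof alludes to --- it records only the nearest-neighbor structure and declares the rest standard --- and it is both simpler than your construction and valid under the weights the lemma actually needs.
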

\begin{proof}
Let $x\sim y$ denote nearest neighbors. Then $\exists!$ $i$ s.t.
$\left|x_{i}-y_{i}\right|=1$, so $x_{j}-y_{j}=0$ for $j\neq i$.
The proof of (\ref{eq:c3}) is standard.
\end{proof}
Set 
\[
\left(\Delta f\right)\left(x\right)=\sum_{y\sim x}e^{\left|x-y\right|}\left(f\left(x\right)-f\left(y\right)\right),\quad\forall f\in\mathscr{H}_{E}.
\]

\begin{example}
For $d=1$, consider $\mathbb{Z}_{+}$ (see Fig \ref{fig:z1c}), and
\[
p_{+}=\frac{e}{1+e},\quad p_{-}=\frac{1}{1+e}.
\]

A function $u$ on $\mathbb{Z}_{+}$ is harmonic iff $I_{x}:=e^{x}\left(u_{x+1}-u_{x}\right)$
is constant; and 
\begin{align*}
\left\Vert u\right\Vert _{\mathscr{H}_{E}}^{2} & =\sum_{x}e^{x}\left(u_{x+1}-u_{x}\right)^{2}=I_{1}^{2}\sum_{x}e^{-x}=\frac{I_{1}^{2}}{e-1}<\infty.
\end{align*}
Fix $0<x<y$, then $v_{xy}=v_{yo}\left(t\right)-v_{xo}\left(t\right)$,
where 
\begin{align*}
v_{yo}\left(t\right) & =\begin{cases}
\sum_{i\leq y}e^{-i} & \mbox{if}\; t\leq y\\
\sum_{i=1}^{y}e^{-i} & \mbox{if}\; t>y
\end{cases}\\
v_{xy}\left(t\right) & =\begin{cases}
0 & \mbox{if}\;0<t\leq x\\
\sum_{i=x+1}^{t}e^{-i} & \mbox{if}\; x<t\leq y\\
\sum_{i=x+1}^{y}e^{-i} & \mbox{if}\; y\leq t,\quad t\in\mathbb{Z}_{+}
\end{cases}
\end{align*}
and 
\[
d_{res}\left(x,y\right)=\sum_{i=x+1}^{y}e^{-i}=\frac{e^{-x}-e^{-y}}{e-1};
\]
and so $d_{res}$ is clearly bounded. 

But in this case the metric compactification is just the one-point
compactification: 
\[
d_{res}\left(x,\infty\right)=\frac{e^{-x}}{e-1};\quad x\in\mathbb{Z}_{+}.
\]

\end{example}
It follows, in these examples, that $B=M\backslash V$ is a singleton;
so $M$ is the one-point compactification. 

\begin{figure}
\includegraphics[width=0.4\textwidth]{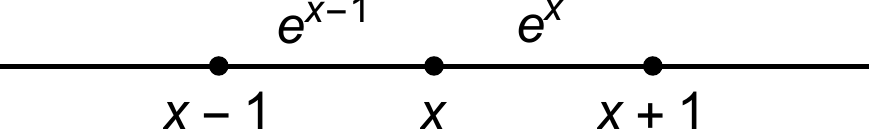}

\protect\caption{\label{fig:z1c}$\mathbb{Z}_{+}$ conductance function $c$, $c_{x,x+1}=e^{x}$,
$x\in\mathbb{Z}_{+}$.}
\end{figure}

\begin{example}
\label{exa:Zlat}$V=\mathbb{Z}_{+}^{d}$, $d>1$. Set $x+\varepsilon_{i}:=\left(x_{1},\cdots,x_{i}+1,x_{i+1}\cdots,x_{d}\right)$,
for all $x\in\mathbb{Z}^{d}$. In this case we have: 
\begin{align*}
\left(\Delta f\right)\left(x\right) & =\sum_{i=1}^{d}e^{\left|x+\varepsilon_{i}\right|}\left(f\left(x\right)-f\left(x+\varepsilon_{i}\right)\right);\;\mbox{and}\\
\left\Vert f\right\Vert _{\mathscr{H}_{E}}^{2} & =\sum_{x,i}e^{\left|x+\varepsilon_{i}\right|}\left|f\left(x\right)-f\left(x+\varepsilon_{i}\right)\right|^{2}\\
 & \sim\sum_{x\in\mathbb{Z}_{+}^{d}}e^{\left|x\right|}\sum_{i=1}^{d}\left|f\left(x+\varepsilon_{i}\right)-f\left(x\right)\right|^{2}\longrightarrow0
\end{align*}
and $\left\Vert f\right\Vert _{\mathscr{H}_{E}}^{2}<\infty\Longrightarrow$
\[
\exists N\quad\mbox{s.t.}\quad e^{\left|x\right|}\sum_{i}\left|f\left(x+\varepsilon_{i}\right)-f\left(x\right)\right|^{2}<1,\quad\forall x<\mathbb{Z}_{+}^{d},\:\forall\left|x\right|>N.
\]
See Fig \ref{fig:nbh3}.

Hence, 
\[
\sum_{i}\left|f\left(x+\varepsilon_{i}\right)-f\left(x\right)\right|^{2}<e^{-\left|x\right|},\;\text{asymptotic as \ensuremath{\left|x\right|\rightarrow\infty}};
\]
 so that 
\[
f\left(x\right)\sim\mbox{Const}\cdot\left(1-e^{-\left|x\right|}\right),\;\text{asymptotic as}\;\left|x\right|\rightarrow\infty\;\mbox{in}\;\mathbb{Z}_{+}^{d}.
\]

Let $x\in L_{d}=\mathbb{Z}_{+}^{d}$, and 
\[
V_{x}\sim e^{-\left|x-t\right|},\quad\forall t\in L_{d}
\]
so that 
\[
\left\langle v_{x},f\right\rangle =f\left(x\right)-f\left(o\right),\quad\forall x\in L_{d},\;\forall f\in\mathscr{H}_{E}.
\]

If $d\geq3$, one checks that 
\[
\left\langle v_{x},v_{y}\right\rangle _{\mathscr{H}_{E}}\simeq\frac{1}{\left|x-y\right|^{d-2}};
\]
and as a result, is a one-point compactification, i.e., $B=M\backslash L_{d}=\left\{ \infty\right\} $
the point at ``infinity.''
\end{example}
\begin{figure}[H]
\includegraphics[width=0.4\textwidth]{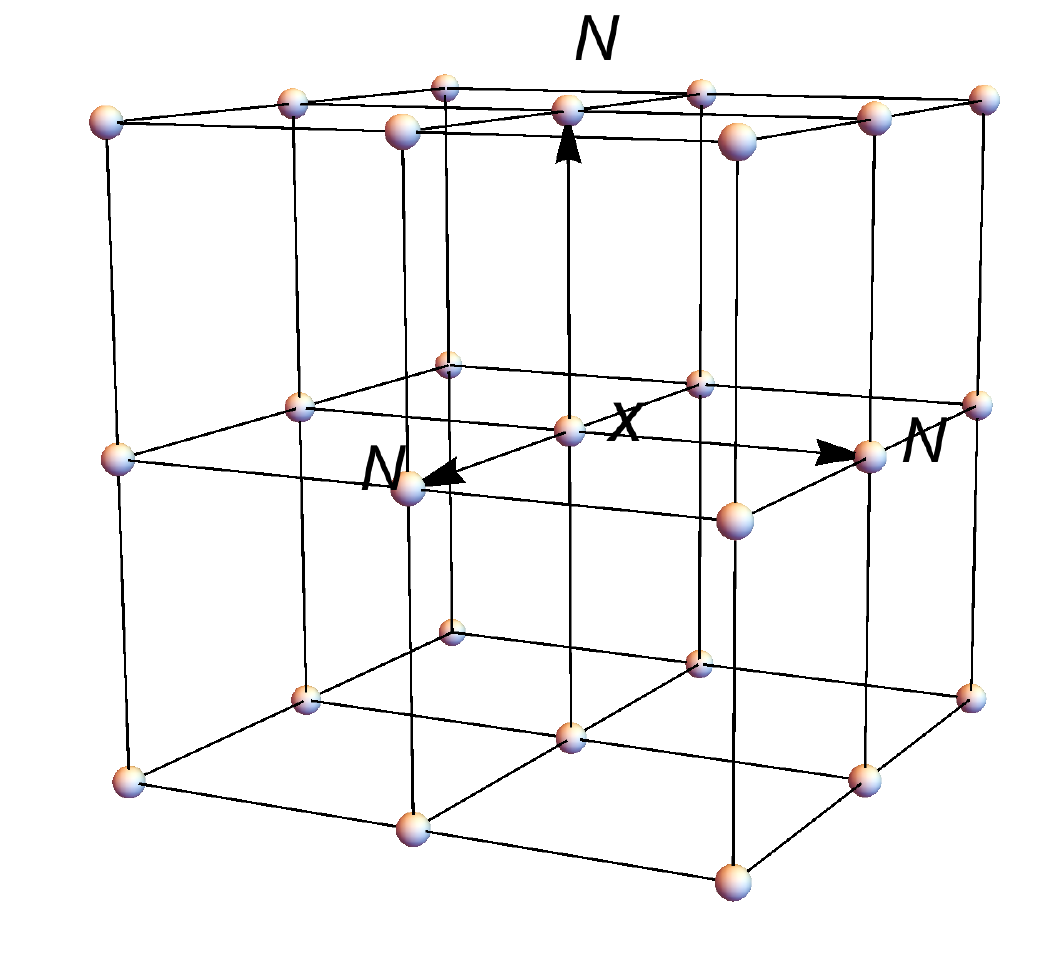}

\protect\caption{\label{fig:nbh3}$d=3$}
\end{figure}

\begin{example}
Let $V=$ the binary tree, see Fig\ref{fig:bt}. If a vertex $x$
in the tree is at level $n$, set 
\[
c_{\left(x,x+\right)}=c_{+}\left(n\right),\quad c_{\left(x,x-\right)}=c_{-}\left(n\right).
\]
Then the arguments from above show that if $\sum_{n=1}^{\infty}\frac{1}{c_{\pm}\left(n\right)}<\infty$,
then $B:=M\backslash V$ is a Cantor-space.
\end{example}
\begin{figure}
\includegraphics[width=0.5\textwidth]{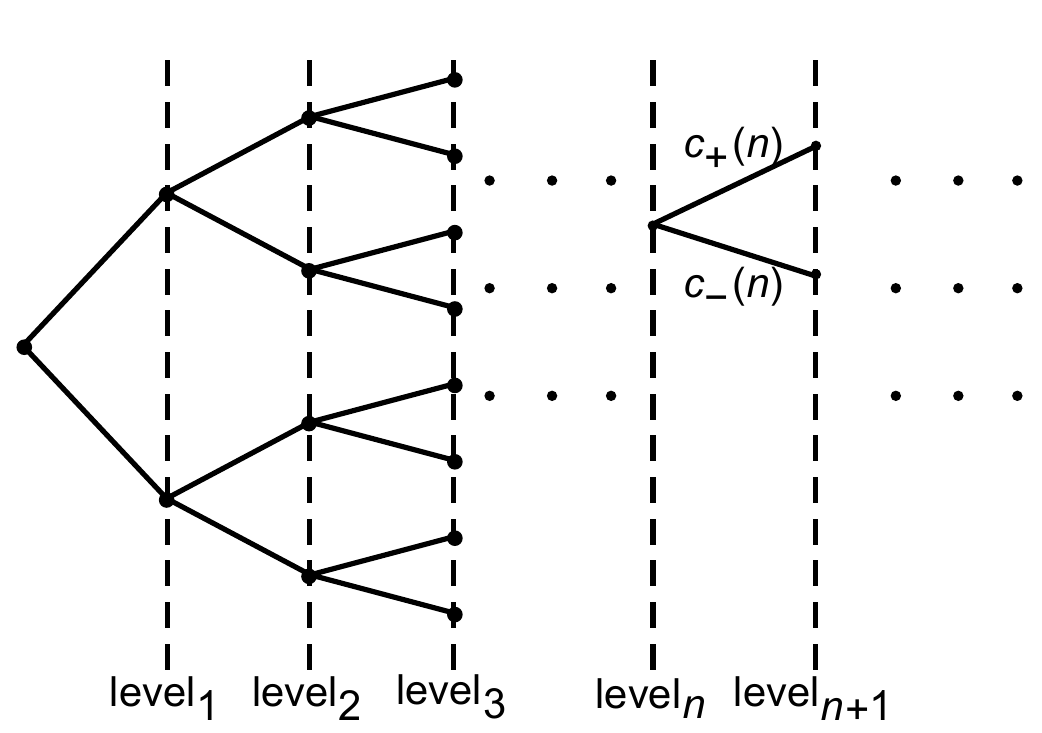}

\protect\caption{\label{fig:bt}Binary tree with conductance}

\end{figure}

\subsection{Bratteli diagrams}

In our present papers, we considered weighted graphs $G=(V,E,c)$,
vertices, edges and a weight (conductance) function. A Bratteli diagram
is a special case of this, but the weighting usually doesn't refer
to a conductance, but rather some kind of counting. In detail, if
$G$ is a Bratteli diagram, then its vertex set is stratified, by
finite subsets $V_{n}$, called levels. While $V$ is infinite, the
sets $V_{n}$ are finite. Then the requirement on $G$ to be a Bratteli
diagram is that the edges (lines in $E$) connect vertices from $V_{n}$
to those at different levels; the nearest neighbor vertices are from
level $n-1$, and level $n+1$. In its initial form (see \cite{MR0312282})
the Bratteli diagrams (later terminology) served as classification
labels for approximately finite-dimensional $C^{*}$-algebras (also
called AF-algebras, more precisely inductive limits of matrix-algebras).
The need for such classification was initially motivated by physics.
Subsequently, and initiated by George Elliott), the Bratteli diagrams
acquired the structure of ordered groups (called $K$-groups), and
the classification problem eventually took a rather complete form.
But in the spirit of the original use of the diagrams from \cite{MR0312282},
they have found many other uses in representation theory; the fundamental
idea being that the lines (edges) are effective in classifying complicated
systems of inclusions, i.e., counting the respective multiplicities
in these inclusions of algebras, or representations, by numbers assigned
to the edges. In this incarnation, they are even known as useful tools
in the design of fast (finite) Fourier transforms.

And there are yet other applications; some deal with symbolic dynamics;
see the papers in the bibliography, for example \cite{MR1194074},
and measures on infinite path spaces obtained from ``infinite strings
of edges'' from the given Bratteli diagram.

The papers \cite{MR1804950} and \cite{MR2030387} deal with yet a
different classification; that of order-isomorphism of the diagrams
themselves. It turns out that the latter classification problem, in
the general case, is so ``complicated'' that it has been proved
to be undecidable. So in the Bratteli-Jorgensen et al. papers regarding
this, we narrowed our focus to that of stationary Bratteli-diagrams;
and we proved that then a classification is possible; even by explicit
algorithms, and by explicit lists of numerical parameters.

Nonetheless the questions we consider here fall in a different category,
and they don't restrict the focus to stationary diagrams; even apply
to graphs $G$ which are not Bratteli diagrams.

If $\Delta=C-E$ as an $\infty\times\infty$ matrix representation,
where 
\begin{align*}
C & =\mbox{diag}\left(c\left(x\right)\right)_{x\in V}=\begin{pmatrix}c\left(x\right) &  & \substack{\scalebox{2.5}{0}}
\\
 & \rotatebox{-45}{\text{\ensuremath{\cdots}\small diagonal\ensuremath{\cdots}}}\\
\substack{\scalebox{2.5}{0}}
\end{pmatrix}
\end{align*}
and
\begin{align*}
E & =\begin{pmatrix}0 & \ddots &  & \substack{\scalebox{2.5}{0}}
\\
\ddots & 0 & c_{xy}\\
 & c_{xy} & \ddots & \ddots\\
\substack{\scalebox{2.5}{0}}
 &  & \ddots
\end{pmatrix}
\end{align*}
symmetric, $c_{xy}>0$; then 
\begin{equation}
\Delta=\left(\Delta_{xy}\right)=C-E\label{eq:bd1}
\end{equation}
where 
\[
\Delta_{xy}=\begin{cases}
c\left(x\right) & \text{if \ensuremath{x=y}}\\
-c_{xy} & \text{if \ensuremath{\left(xy\right)\in E}}\;(\mbox{recall}\;\ensuremath{x\sim y\Longrightarrow x\neq y})\\
0 & \text{otherwise;}
\end{cases}
\]
and we get the Green's function $K$ as follows: 
\begin{equation}
K=\left\langle v_{x},v_{y}\right\rangle _{\mathscr{H}_{E}}\label{eq:bd2}
\end{equation}
the Green's function of $\Delta$ satisfies
\begin{equation}
\sum_{z}\Delta_{xz}K_{zy}=\delta_{xy},\label{eq:bd3}
\end{equation}
and 
\begin{eqnarray}
\Delta^{-1} & = & \left(C-E\right)^{-1}=\left(I-C^{-1}E\right)^{-1}C^{-1}\nonumber \\
 & = & \sum_{n=0}^{\infty}\left(C^{-1}E\right)^{n}C^{-1}=G_{P}C^{-1},\label{eq:bd4}
\end{eqnarray}
where $G_{P}$ is the Green's function of a Markov transition (see
Fig \ref{fig:mark}). Note that $C^{-1}$ is easy, since it is diagonal:
\begin{equation}
C=\mbox{diag}\left(\left(c\left(x\right)\right)_{x\in V}\right),\;\mbox{then}\quad C^{-1}=\mbox{diag}\left(\left(\frac{1}{c\left(x\right)}\right)_{x\in V}\right).\label{eq:bd5}
\end{equation}
An example is (see Fig \ref{fig:mark}-\ref{fig:Bd}) 
\begin{equation}
c\left(n\right)=c_{n}+c_{n+1},\quad c_{n}>0.\label{eq:bd6}
\end{equation}

\begin{figure}
\includegraphics[width=0.8\textwidth]{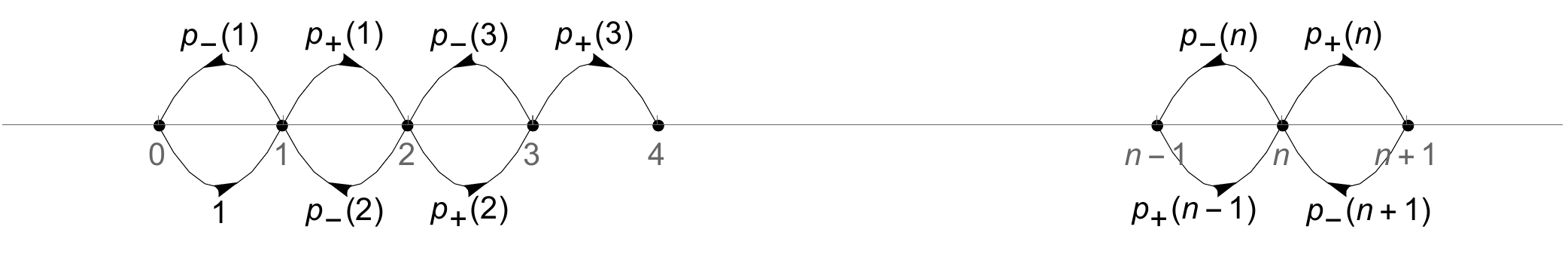}

\protect\caption{\label{fig:mark} Transition probabilities $p_{\pm}\left(n\right)$,
$n=0,1,2\cdots$}
\end{figure}

\begin{figure}
\includegraphics[width=0.7\textwidth]{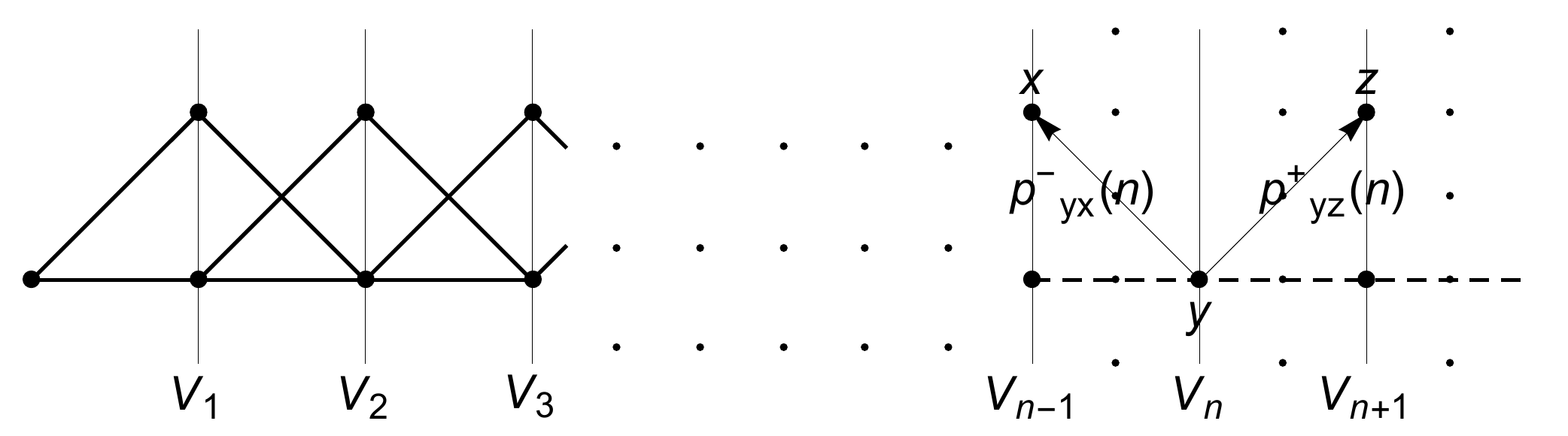}

\protect\caption{\label{fig:Bd}A Bratteli diagram, formula (\ref{eq:bd6}) with vertex-set
$V=\left\{ \emptyset\right\} \cup V_{1}\cup V_{2}\cup\cdots$ and
transition between neighboring levels.}
\end{figure}

\begin{lem}
\label{lem:K}If $\left(V,E,c\right)$ is constructed from a Bratteli
diagram with levels $V_{1},V_{2},\cdots$, then the Green's function
$K$ for $\Delta$ satisfies 
\begin{equation}
K=G_{P}C^{-1}\label{eq:bd10}
\end{equation}
where $G_{P}$ is the random-walk Green's function associated with
a $\pm$ Markov random walk, see (\ref{eq:bd5}) and Fig \ref{fig:mark}. 
\end{lem}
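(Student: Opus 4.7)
The plan is to read off the identity $K=G_PC^{-1}$ directly from the Neumann-series computation already set up in display \eqref{eq:bd4}, and then to explain why the series that appears there is precisely the Green's function of the $\pm$ nearest-level Markov random walk determined by the Bratteli structure. So the work is mostly bookkeeping plus a convergence justification.

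First I would fix notation. With $C=\mathrm{diag}(c(x))_{x\in V}$ and $E$ the off-diagonal matrix with entries $c_{xy}$, one has $\Delta=C-E$ as in \eqref{eq:bd1}. Multiplying on the left by $C^{-1}$ gives
\begin{equation*}
C^{-1}\Delta \;=\; I-C^{-1}E \;=\; I-\mathbb{P},
\end{equation*}
where $(\mathbb{P})_{xy}=c_{xy}/c(x)=p_{xy}$ is the Markov transition operator of the reversible random walk on $(V,E,c)$. The defining relation \eqref{eq:bd3} for $K$ says $\Delta K=I$ (as an $\infty\times\infty$ matrix identity, in the sense already discussed in the excerpt following \eqref{eq:m14}), hence
\begin{equation*}
K \;=\; \Delta^{-1} \;=\; (I-\mathbb{P})^{-1}C^{-1}.
\end{equation*}
The factor $(I-\mathbb{P})^{-1}$ is, by definition, the random-walk Green's function $G_P$, at least whenever the Neumann series $\sum_{n\ge0}\mathbb{P}^n$ converges entrywise; by the Markov interpretation \eqref{eq:mk2}, its $(x,y)$ entry equals the expected number of visits of the walk to $y$ started at $x$. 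With that identification in hand the formula $K=G_PC^{-1}$ of \eqref{eq:bd10} is immediate.

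The one substantive step is the convergence of $\sum_n\mathbb{P}^n$ in the Bratteli-diagram setting. This is where the Bratteli hypothesis is used: because edges only run between consecutive levels $V_{n-1}$ and $V_{n+1}$, the walk at each step either moves ``up'' or ``down'' one level, so $\mathbb{P}$ is exactly the $\pm$ nearest-level transition indicated in Figure \ref{fig:mark}. I would argue that under the standing assumption that the resistance metric is bounded and the Laplacian is transient (equivalently, that the system admits non-trivial dipoles $v_x\in\mathscr{H}_E$ satisfying \eqref{eq:dd1}), each entry $\sum_n(\mathbb{P}^n)_{xy}$ is finite; indeed
\begin{equation*}
\sum_{n=0}^{\infty}(\mathbb{P}^n)_{xy}\;=\;c(y)\,K_{xy}\;=\;c(y)\,\langle v_x,v_y\rangle_{\mathscr{H}_E},
\end{equation*}
which is precisely the formula \eqref{eq:bd10} read in reverse and which bounds the partial sums uniformly by the finite quantity on the right. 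Thus the formal manipulation leading to \eqref{eq:bd4} is rigorously justified entry-by-entry.

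The main obstacle I anticipate is not algebraic but notational: making the matrix-product identity $\Delta K=I=K\Delta$ fully precise on the infinite graph, i.e., verifying that the two formal products \eqref{eq:m13}--\eqref{eq:m14} converge in the strong sense needed to invoke associativity in $\Delta^{-1}=(C-E)^{-1}=(I-C^{-1}E)^{-1}C^{-1}$. I would handle this by working first on a finite exhaustion $V_1\cup\cdots\cup V_N$ (which is natural for Bratteli diagrams because each level is finite), where all three matrices are finite and the identity $K^{(N)}=G_P^{(N)}(C^{(N)})^{-1}$ is elementary linear algebra, and then passing to the limit $N\to\infty$ using monotone convergence of the nonnegative series $\sum_n(\mathbb{P}^n)_{xy}$. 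The identification of the limit of $G_P^{(N)}$ with the full walk Green's function $G_P$ is then standard Markov-chain theory, and \eqref{eq:bd10} follows.
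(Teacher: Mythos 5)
Your proposal takes essentially the same route as the paper's (sketched) proof: both rest on the factorization $\Delta=C-E$, the Neumann-series identity $\left(I-C^{-1}E\right)^{-1}=\sum_{n\geq0}\left(C^{-1}E\right)^{n}$ from (\ref{eq:bd4}), and the identification of the entries of $\left(C^{-1}E\right)^{m}$ with $m$-step transition probabilities of the $\pm$ level-to-level walk, which is exactly the content of (\ref{eq:bd13}). Your convergence discussion (finite-level exhaustion, monotone convergence, transience) adds rigor that the paper's sketch omits; the only caution is that transience is equivalent to the existence of finite-energy \emph{monopoles}, not of the dipoles $v_{x}$, which exist unconditionally by the Riesz lemma, so your parenthetical equivalence should be restated in those terms.
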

For Bratteli diagrams, see e.g., \cite{MR2030387,MR1804950,MR0312282,MR1710743,MR1194074};
and random walks, see e.g., \cite{MR3158629}. 
\begin{proof}[Proof of Lemma \ref{lem:K} (sketch)]
 Let $\left(p_{-}\left(n\right)\right)$ and $\left(p_{+}\left(n\right)\right)$
be the transition matrices
\begin{enumerate}
\item[] $\left(p_{-}\left(n\right)\right)_{xy}$: $x\in V_{n}$, $y\in V_{n-1}$,
transition from vertex on $V_{n}$ to $V_{n-1}$
\item[] $\left(p_{+}\left(n\right)\right)_{yz}$: $y\in V_{n}$, $z\in V_{n+1}$,
transition from vertex on $V_{n}$ to $V_{n+1}$, see Fig \ref{fig:trp},
\end{enumerate}

\begin{flushleft}
with row/column index picked from vertices in the respective levels. 
\par\end{flushleft}

The product of $C^{-1}E$ in (\ref{eq:bd4}) is then (see Fig \ref{fig:Bd1})
\begin{equation}
\left(C^{-1}E\right)_{xy}^{m}=\mbox{Prob}\left(\mbox{transition from vertex }x\:\mbox{to}\:\mbox{vertex}\: y\:\mbox{in time}\: m\right).\label{eq:bd13}
\end{equation}

\end{proof}
\begin{rem}
\label{rem:rwalk}Under the assumption in Theorem \ref{thm:cpt1}
and Theorem \ref{thm:poisson} one may show that in fact $B$ (see
(\ref{eq:p4})) is \emph{Martin-boundary} (see \cite{MR1463727,MR2863855})
for the random walk on $V$ defined by 
\begin{equation}
p_{xy}:=\frac{c_{xy}}{c\left(x\right)}\quad\mbox{for}\;\left(x,y\right)\in E.\label{eq:mb1}
\end{equation}
\end{rem}
\begin{proof}
(sketch) Let $G_{P}$ be the random-walk Green's function from (\ref{eq:bd4})
and Lemma \ref{lem:K}. Set 
\begin{equation}
K_{Martin}\left(x,y\right):=\frac{G_{P}\left(x,y\right)}{G_{P}\left(o,y\right)}.\label{eq:mb2}
\end{equation}
Then the argument from Theorem \ref{thm:cpt1} shows that $K_{Martin}\left(x,\cdot\right)$
extends to $B$, and that 
\begin{equation}
h\left(x\right)=\int_{B}\widetilde{h}\left(b\right)K_{Martin}\left(x,b\right)d\mu^{\left(Markov\right)}\left(b\right)\label{eq:mb3}
\end{equation}
holds for all $h\in Harm=\mathscr{H}_{E}\cap\left\{ h\::\:\Delta h=0\right\} =\mathscr{H}_{E}\cap\left\{ h\::\:\mathbb{P}h=h\right\} $. 
\end{proof}
\begin{figure}
\includegraphics[width=0.8\textwidth]{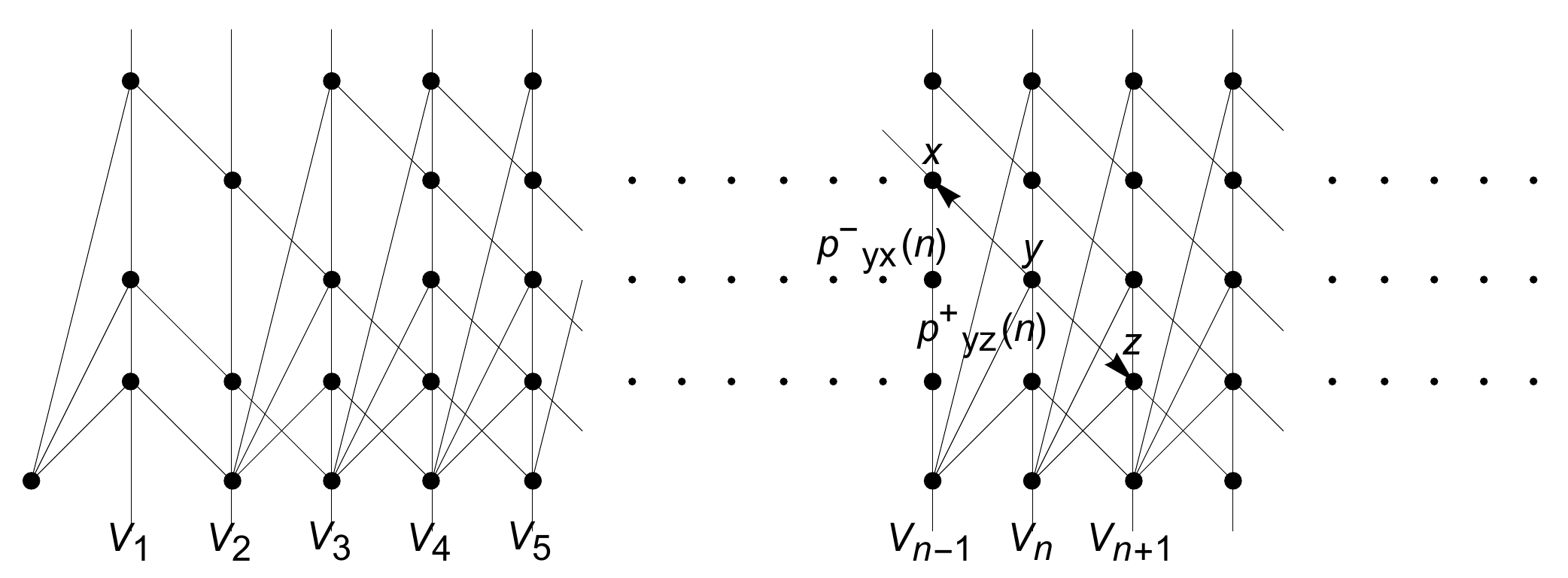}

\protect\caption{\label{fig:trp}}
\end{figure}

\begin{figure}
\includegraphics[width=0.4\textwidth]{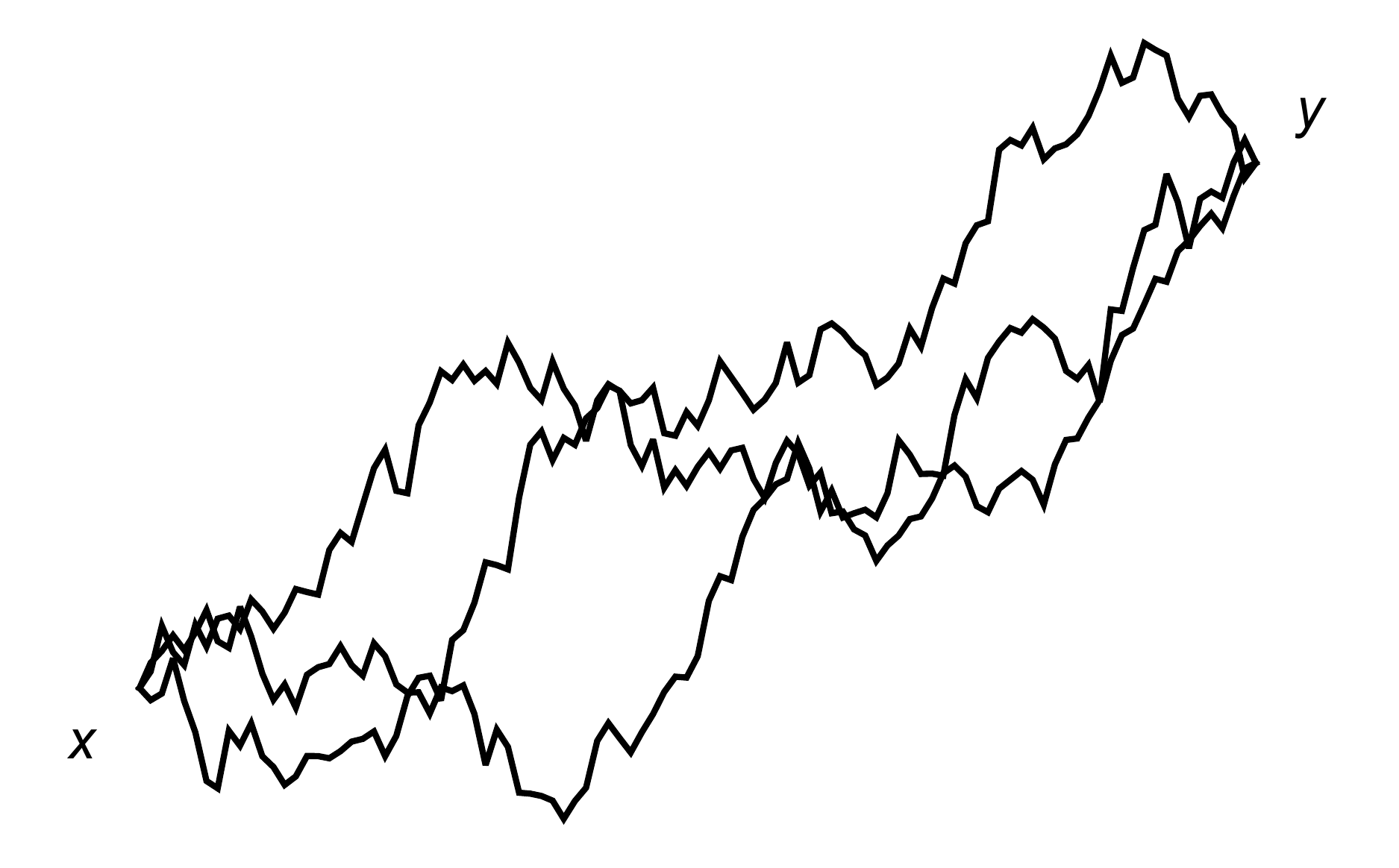}

\protect\caption{\label{fig:Bd1}Current flows from vertex $x$ to vertex $y$.}
\end{figure}

\begin{example}
\label{exa:btd}For the transition matrix $C^{-1}E=P$, computed with
the system in Fig \ref{fig:bt} of transition probabilities, we get
the following:
\begin{equation}
\begin{split}p_{i,i} & =0\\
p_{i,i+1} & =p_{+}\left(i\right),\;\mbox{and}\\
p_{i,i-1} & =p_{-}\left(i\right),\;\forall i\in\mathbb{Z},
\end{split}
\label{eq:tt1}
\end{equation}
with the remaining matrix-entries zero. For the computation of the
matrix powers $P^{m}$, $m=1,2,\cdots$, we make the following simplification:
$p_{+}\left(i\right)=p_{+}$, and $p_{-}\left(i\right)=p_{-}$. 

This then reduces to the following binomial model:
\begin{equation}
P=\begin{pmatrix}\rotatebox{-38}{\tiny\text{diagonal}}\\
\ddots & \ddots & \ddots\\
 & p_{-} & 0 & p_{+} &  & \substack{\scalebox{3}{0}}
\\
 &  & p_{-} & 0 & p_{+}\\
 &  &  & p_{-} & 0 & p_{+}\\
 & \substack{\scalebox{3}{0}}
 &  &  & \ddots & \ddots & \ddots\\
 &  &  &  &  &  & \rotatebox{-38}{\tiny\text{diagonal}}
\end{pmatrix},\label{eq:tt2}
\end{equation}
\begin{equation}
P^{2}=\begin{pmatrix}\ddots & \rotatebox{-35}{\tiny\text{diagonal}} & \ddots & \ddots &  &  & \substack{\scalebox{3}{0}}
\\
\ddots & 0 & 2p_{+}p_{-} & 0 & p_{+}^{2}\\
 & p_{-}^{2} & 0 & 2p_{+}p_{-} & 0 & p_{+}^{2}\\
 &  & p_{-}^{2} & 0 & 2p_{+}p_{-} & 0 & \ddots\\
\substack{\scalebox{3}{0}}
 &  &  & \ddots & \ddots & \rotatebox{-35}{\tiny\text{diagonal}} & \ddots
\end{pmatrix},
\end{equation}
and
\begin{equation}
P^{3}=\begin{pmatrix}\ddots & \ddots & \rotatebox{-35}{\tiny\text{diagonal}} & \ddots & \ddots & \ddots &  &  & \substack{\scalebox{3}{0}}
\\
\ddots & 0 & 3p_{+}p_{-}^{2} & 0 & 3p_{+}^{2}p_{-} & 0 & p_{+}^{3}\\
 & p_{-}^{3} & 0 & 3p_{+}p_{-}^{2} & 0 & 3p_{+}^{2}p_{-} & 0 & p_{+}^{3}\\
 &  & p_{-}^{3} & 0 & 3p_{+}p_{-}^{2} & 0 & 3p_{+}^{2}p_{-} & 0 & \ddots\\
\substack{\scalebox{3}{0}}
 &  &  & \ddots & \ddots & \ddots & \rotatebox{-35}{\tiny\text{diagonal}} & \ddots & \ddots
\end{pmatrix},
\end{equation}

\end{example}
Below we include a sample of matrix-entries for this binomial model:
\begin{itemize}
\item \emph{Even powers of the transition-matrix $P$}
\begin{eqnarray*}
P_{i,i+2k}^{2m} & = & \begin{pmatrix}2m\\
m-k
\end{pmatrix}p_{+}^{m+k}p_{-}^{m-k},\;\mbox{and}\\
P_{i,i-2k}^{2m} & = & \begin{pmatrix}2m\\
m-k
\end{pmatrix}p_{+}^{m-k}p_{-}^{m+k};
\end{eqnarray*}
where $k=0,1,\cdots,m$.
\item \emph{Odd powers of the transition-matrix $P$
\begin{eqnarray*}
P_{i,i+1+2k}^{2m+1} & = & \begin{pmatrix}2m+1\\
m-k
\end{pmatrix}p_{+}^{m+k+1}p_{-}^{m-k},\;\mbox{and}\\
P_{i,i-1-2k}^{2m+1} & = & \begin{pmatrix}2m+1\\
m-k
\end{pmatrix}p_{+}^{m-k}p_{-}^{m+k+1}.
\end{eqnarray*}
}
\end{itemize}
So for the $\infty\times\infty$ matrix $G_{P}$ in (\ref{eq:bd4})
we get:
\begin{eqnarray*}
\left(G_{P}\right)_{i,i+2k} & = & \sum_{m=0}^{\infty}\begin{pmatrix}2m\\
m-k
\end{pmatrix}p_{+}^{m+k}p_{-}^{m-k};\;\mbox{and}\\
\left(G_{P}\right)_{i,i+2k+1} & = & \sum_{m=0}^{\infty}\begin{pmatrix}2m+1\\
m-k
\end{pmatrix}p_{+}^{m+k+1}p_{-}^{m-k}.
\end{eqnarray*}
As a result, (\ref{eq:bd4}) yields an explicit formula for $K_{i,j}=\left\langle v_{i},v_{j}\right\rangle _{\mathscr{H}_{E}}$;
see (\ref{eq:bd4}) and (\ref{eq:bd2}).
\begin{thm}
The $\Delta$-Green's function $K$ in (\ref{eq:ta1}) has an explicit
(and closed form) expression; for example, its diagonal entries are:
\[
K_{i,i}=\frac{1}{c\left(i\right)\sqrt{1-4p_{+}\left(1-p_{+}\right)}}\quad\mbox{when}\; p_{+}\neq\frac{1}{2}.
\]
\end{thm}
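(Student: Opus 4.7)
The plan is to assemble three ingredients that are already in place in the excerpt, and then combine them. First, by Lemma~\ref{lem:K} we have the factorization $K = G_P C^{-1}$ (see (\ref{eq:bd10})). Because $C^{-1} = \mathrm{diag}(1/c(i))$ is diagonal, the diagonal of the matrix product is simply
\[
K_{i,i} = (G_P)_{i,i}\cdot\frac{1}{c(i)}.
\]
Thus the problem reduces to evaluating the diagonal entries of the random-walk Green's function $G_P$ associated with the $\pm$ Markov chain of Example~\ref{exa:btd}.

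Second, I would invoke the explicit binomial formula obtained for $(G_P)_{i,i+2k}$ in Example~\ref{exa:btd}; specializing to $k=0$ gives
\[
(G_P)_{i,i} \;=\; \sum_{m=0}^{\infty}\binom{2m}{m}\,p_{+}^{m}\,p_{-}^{m} \;=\; \sum_{m=0}^{\infty}\binom{2m}{m}\bigl(p_{+}p_{-}\bigr)^{m}.
\]
This is the classical generating function for the central binomial coefficients, namely $\sum_{m\geq 0}\binom{2m}{m}x^{m}=(1-4x)^{-1/2}$, valid for $|x|<1/4$.

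Third, I would check that the convergence condition matches the hypothesis. Since $p_{+}+p_{-}=1$ with $p_{\pm}\geq 0$, the AM-GM inequality gives $p_{+}p_{-}=p_{+}(1-p_{+})\leq 1/4$, with equality \emph{iff} $p_{+}=1/2$. The assumption $p_{+}\neq 1/2$ therefore ensures $4p_{+}(1-p_{+})<1$, so the generating-function identity applies and yields
\[
(G_P)_{i,i} \;=\; \frac{1}{\sqrt{1-4p_{+}(1-p_{+})}}.
\]
Substituting back produces the claimed formula $K_{i,i} = \bigl(c(i)\sqrt{1-4p_{+}(1-p_{+})}\bigr)^{-1}$.

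The main obstacle is really only bookkeeping rather than conceptual: one must (i) justify the termwise interchange/convergence implicit in writing $(G_P)_{i,i}=\sum_m (P^m)_{i,i}$, which is precisely where the bound $4p_{+}(1-p_{+})<1$ enters, and (ii) confirm that the specialization $k=0$ of the even-power entries from Example~\ref{exa:btd} gives the series written above. The boundary case $p_{+}=1/2$ is genuinely singular (the series diverges, reflecting recurrence of symmetric nearest-neighbor random walk in one dimension), so the hypothesis $p_{+}\neq 1/2$ is sharp and not a mere technical convenience.
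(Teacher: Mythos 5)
Your proposal is correct and follows essentially the same route as the paper's own proof: both reduce via $K=G_{P}C^{-1}$ to the diagonal of $G_{P}$, specialize the even-power binomial formula at $k=0$, and apply the central binomial generating function $\sum_{m}\binom{2m}{m}\lambda^{m}=(1-4\lambda)^{-1/2}$ with $\lambda=p_{+}p_{-}=p_{+}(1-p_{+})$. Your explicit AM-GM justification that $p_{+}\neq\frac{1}{2}$ guarantees $4p_{+}(1-p_{+})<1$ is a welcome tightening of a point the paper only records in a remark following the proof.
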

\begin{proof}
The infinite sums used in computation of $\left(G_{P}\right)_{i,j}$;
and therefore of 
\begin{equation}
K_{i,j}=\left(G_{P}\right)_{i,j}/c\left(j\right)\label{eq:ta1}
\end{equation}
can be computed with the use of \emph{generating functions} for the
associated binomial coefficients. For example,
\begin{equation}
\sum_{n=0}^{\infty}\lambda^{m}\begin{pmatrix}2m\\
m
\end{pmatrix}=\frac{1}{\sqrt{1-4\lambda}},\;\mbox{setting}\;\lambda:=p_{+}p_{-};\label{eq:ta2}
\end{equation}
and so we get
\begin{equation}
\left(G_{P}\right)_{i,i}=\frac{1}{\sqrt{1-4p_{+}p_{-}}};\label{eq:ta3}
\end{equation}
and therefore
\begin{equation}
K_{i,i}=\frac{1}{c\left(i\right)\sqrt{1-4p_{+}\left(1-p_{+}\right)}}=\left\langle v_{i},v_{i}\right\rangle _{\mathscr{H}_{E}}=d_{res}\left(o,i\right),\label{eq:ta4}
\end{equation}
which is the desired conclusion.
\end{proof}
Note that to get absolute convergence in these series the requirement
on $p_{+}$ is that $p_{+}\in\left(0,\frac{1}{2}\right)\cup\left(\frac{1}{2},1\right)$.
(In this case, the resistance metric is bounded. We have $\sum_{j}\frac{1}{c\left(j\right)}<\infty$.)
The degenerate case is $p_{+}=p_{-}=\frac{1}{2}$. However the latter
degenerate case can easily be computed by hand. It is the case of
constant conductance function, $c_{i,i+1}=1$. 

For more details on this and related binomial models, see \cite{AJ15,AJSV13,MR3224442}.
\begin{rem}[On general Bratteli diagrams]
 While the formulas (\ref{eq:bd13})-(\ref{eq:ta4}) are derived
subject to rather restricting assumptions, an inspection of the arguments
shows that the ideas work for general Bratteli-diagrams; but then
with modifications; see below:

Given a Bratteli diagram with vertex-set $V=\left\{ 0\right\} \bigcup_{n=1}^{\infty}V_{n}$,
and vertices $V_{n}$ corresponding to levels $n=1,2,\cdots$ (see
Fig \ref{fig:trp}), we then have the following transition matrices:
\begin{equation}
\begin{split}\begin{cases}
p^{+}\left(n\right){}_{x,y} & x\in V_{n},\: y\in V_{n+1},\;\mbox{and}\\
p^{-}\left(n\right)_{x,z} & x\in V_{n},\: z\in V_{n-1}.
\end{cases}\end{split}
\label{eq:btd1}
\end{equation}
Therefore, in computing transition-probabilities, 
\begin{equation}
\mbox{Prob}\left(x\longrightarrow y\;\mbox{in}\;2m\;\mbox{iterations}\right),\label{eq:btd2}
\end{equation}
we specialize to $x\in V_{n}$, and $y\in V_{n+2k}$. Rather than
the easy formulas $\binom{2m}{m+k}p_{+}^{m+k}p_{-}^{m-k}$ from the
proof in Example \ref{exa:btd}, we now instead get a sum of products
of non-commutative matrices: 
\begin{equation}
P_{w_{1}}P_{w_{2}}\cdots P_{w_{2m}}\label{eq:btd3}
\end{equation}
where $w=\left(w_{1},w_{2},\cdots,w_{2m}\right)$ is a finite word
in the two-letter alphabet $\left\{ \pm\right\} $, i.e., $w_{i}\in\left\{ \pm\right\} $;
but the estimates from before carry over; and we again arrive at an
expression for the Green's function $\left(G_{P}\right)_{x,y}$, $x,y\in V$,
analogous to (\ref{eq:bd13})-(\ref{eq:ta4}).\end{rem}
\begin{example}[The $N$-ary tree]
\label{exa:Ntree} Fix $N>1$. Let $b\in\mathbb{R}_{+}$, $b>1$,
be fixed, and set
\begin{equation}
c\left(n\right):=b^{n},\quad x\in V_{n},\: y\in V_{n+1};\label{eq:btd4}
\end{equation}
then (see \ref{eq:btd1}), we have (see Fig \ref{fig:Ntree}):
\begin{equation}
\begin{split}\begin{cases}
p^{+}\left(n\right)_{xy} & =\dfrac{b}{1+Nb},\vspace{0.5em}\\
p^{-}\left(n\right)_{xz} & =\dfrac{1}{1+Nb},\;\mbox{and}\vspace{0.5em}\\
c\left(n\right)_{x} & =b^{n-1}\left(1+Nb\right)
\end{cases}\end{split}
\label{eq:btd5}
\end{equation}
where $x\in V_{n}$, $y\in V_{n+1}$, $z\in V_{n-1}$.

Generalizing (\ref{eq:ta3}), we get 
\begin{equation}
\left(G_{P}\right)_{x,x'}=\frac{Nb+1}{Nb-1}\label{eq:btd6}
\end{equation}
for all $x,x'\in V_{n}$; and 
\begin{equation}
d_{res}\left(\emptyset,x\right)=\frac{1}{\left(1+Nb\right)b^{n-1}};\label{eq:btd7}
\end{equation}
and $d_{res}\left(x,B\right)<\infty$.

One can show that, if $\#V_{1}<\#V_{2}<\cdots$ (strictly increasing),
then 
\begin{equation}
\dim\left\{ f\::\:\Delta f=0\right\} =\infty.
\end{equation}

\end{example}
\begin{figure}
\includegraphics[width=0.5\textwidth]{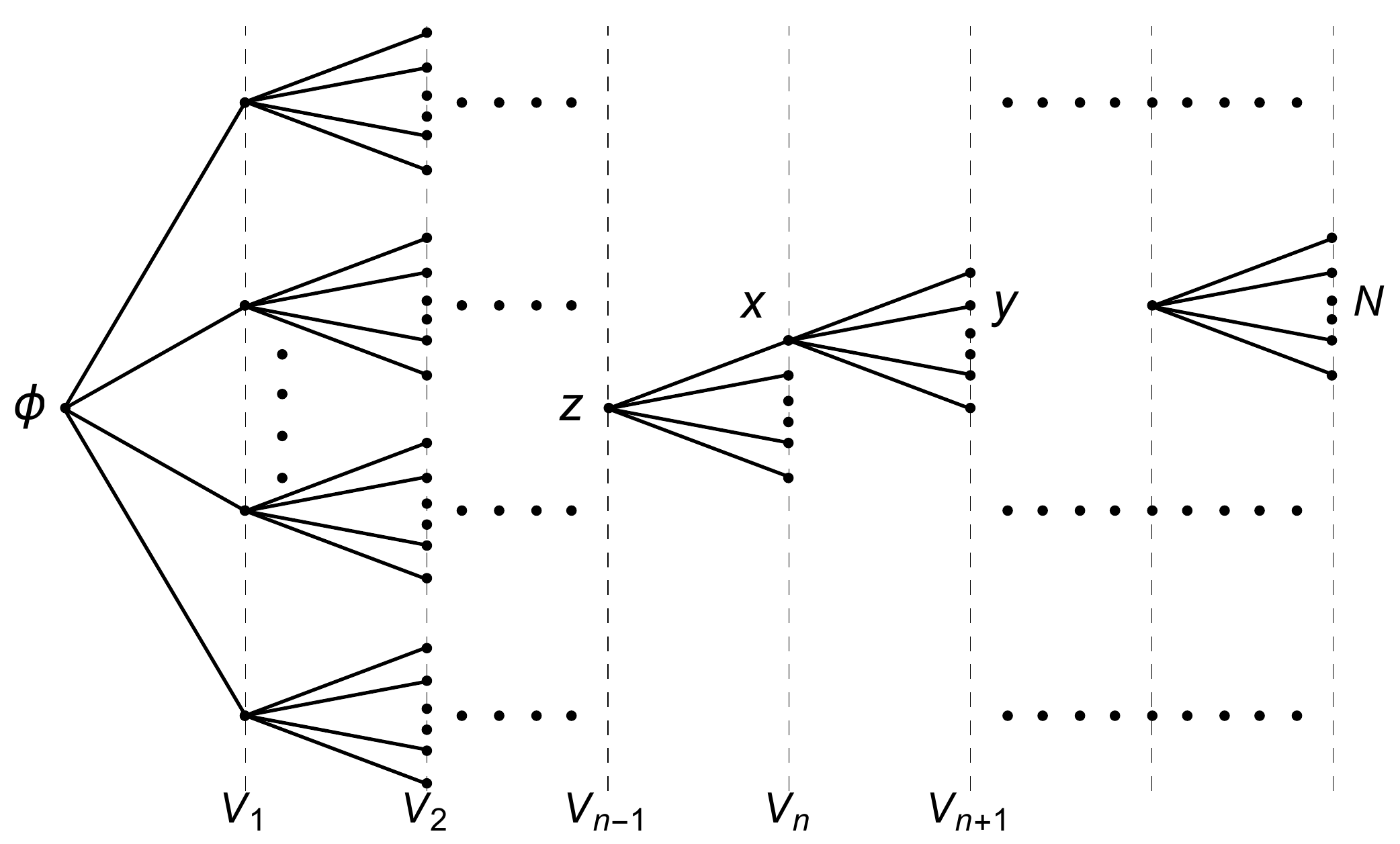}

\protect\caption{\label{fig:Ntree}$N$-ary tree; the vertices at level $n$ are denoted
$V_{n}$, $n=0,1,\cdots$, $V_{0}=\left\{ \emptyset\right\} $, the
empty word. }
\end{figure}

\section{\label{sec:path}The path-space Markov measure vs the Poisson-measure
on $B$}

Here, we consider a class of models $\left(V,E,c\right)$:
\begin{enumerate}[label=(\roman{enumi}),ref=\roman{enumi}]
\item \label{enu:ps1}$V=$ vertices;
\item $E\subset V\times V\backslash\left(\text{diagonal}\right)$ edges; 
\item $c:E\rightarrow\mathbb{R}_{+}$ conduction function, which induces
a resistance metric $d_{res}$; 
\item $\Delta$ graph Laplacian;
\item $\mathscr{H}_{E}$ the energy-Hilbert space; 
\item $B=M\backslash V$, where $M$ is the metric completion; 
\item path space $\Omega=\left\{ \omega=\left(\omega_{i}\right)\:\big|\:\omega_{i}\in V,\:\left(\omega_{i}\omega_{i+1}\right)\in E,\:\forall i\in\mathbb{N}\right\} $;
\item Set $\pi_{i}\left(\omega\right)=\omega_{i}\in V$ (vertex at time
$i$), $i=0,1,2,\cdots$, and 
\[
\Omega_{x}=\left\{ \omega\in\Omega\:\big|\:\pi_{0}\left(\omega\right)=x\right\} ;
\]

\item \label{enu:ps9}Set $p_{xy}=c_{xy}/c\left(x\right)$, $\left(xy\right)\in E$;
\item \label{enu:ps10}$\mu_{x}^{\left(M\right)}$: Markov measure on $\Omega_{x}$,
$x\in V$ with transition
\begin{equation}
\mu_{x}^{\left(M\right)}\left(\text{cylinder}\right)=p_{x\omega_{1}}p_{\omega_{1}\omega_{2}}\cdots\;(\text{see \eqref{enu:ps9}}).\label{eq:cylinder}
\end{equation}

\end{enumerate}
In more detail, a cylinder set $\subset\Omega$ is specified by a
finite word $\left(xx_{1}x_{2}\cdots x_{n}\right)$ of vertices such
that $\left(xx_{1}\right),\left(x_{1}x_{2}\right),\cdots$ are edges
(i.e., in $E$). Then set 
\begin{equation}
C_{xx_{1}\cdots x_{n}}=\left\{ \omega\in\Omega\:\big|\:\pi_{0}\left(\omega\right)=x,\:\pi_{i}\left(\omega\right)=x_{i}\:1\leq i\leq n\right\} .\label{eq:cy1}
\end{equation}
Formula (\ref{eq:cylinder}) then reads as follows:
\begin{equation}
\mu_{x}^{\left(M\right)}\left(C_{xx_{1}x_{2}\cdots x_{n}}\right)=p_{xx_{1}}p_{x_{1}x_{2}}\cdots p_{x_{n-1}}p_{x_{n}}\label{eq:cy2}
\end{equation}
The following is known, see e.g., \cite{MR0405603,MR2358531}:
\begin{lem}
There is a 1-1 correspondence between harmonic functions $h$ on $V$,
on the one hand, and shift-invariant $L^{1}$-functions $F$ on $\Omega$,
on the other. It is given as follows:

Let $\mathbb{E}$ denote the expectation computed w.r.t. the Markov-measure
on $\Omega$, then 
\begin{equation}
h\left(x\right)=\mathbb{E}\left(F\big|\pi_{0}=x\right),\quad x\in V,\label{eq:cy3}
\end{equation}
is harmonic of finite energy iff there is a shift-invariant $L^{1}$-function
$F$ on $\Omega$ such that (\ref{eq:cy3}) holds. (In (\ref{eq:cy3}),
the symbol $\mathbb{E}\left(\cdot\cdot\:|\:\pi_{0}=x\right)$ refers
to conditional expectation.)\end{lem}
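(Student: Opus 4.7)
My plan is to set up the bijection via the two natural constructions and verify they are mutually inverse. Let $\sigma:\Omega\to\Omega$ denote the shift, $(\sigma\omega)_i=\omega_{i+1}$, and let $\mathcal{F}_n$ be the $\sigma$-algebra generated by $\pi_0,\pi_1,\ldots,\pi_n$. Throughout I will use the standing assumption that $d_{res}$ is bounded, so that by Theorem \ref{thm:algebra} every $h\in\mathscr{H}_E$ is automatically bounded on $V$.

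\emph{From $F$ to $h$.} Let $F\in L^{1}(\Omega,\mu^{(M)})$ be shift-invariant, and define $h(x):=\mathbb{E}(F\mid\pi_{0}=x)$. The Markov property applied to the transition probabilities $p_{xy}=c_{xy}/c(x)$ from (\ref{enu:ps9}) gives, via one-step conditioning,
\[
\mathbb{E}(F\circ\sigma\mid\pi_{0}=x)=\sum_{y\sim x}p_{xy}\,\mathbb{E}(F\mid\pi_{0}=y)=(\mathbb{P}h)(x).
\]
Since $F\circ\sigma=F$, the left side equals $h(x)$, so $\mathbb{P}h=h$, i.e., $h$ is harmonic. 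For membership in $\mathscr{H}_E$ (finite energy) I would first assume $F$ is bounded (the general $L^1$ case then follows by truncation and monotone class, together with the estimate (\ref{eq:gg3})) and use the martingale representation in the opposite direction to control $\|h\|_{\mathscr{H}_E}$.

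\emph{From $h$ to $F$.} Given a harmonic $h\in\mathscr{H}_E$, consider the process $M_n(\omega):=h(\pi_n(\omega))$. The harmonicity $\mathbb{P}h=h$ together with the Markov property yields $\mathbb{E}(M_{n+1}\mid\mathcal{F}_n)=M_n$, so $(M_n)$ is a martingale under $\mu_x^{(M)}$ for each starting vertex $x$. By Theorem \ref{thm:algebra}, $h\in l^{\infty}(V)$, so $(M_n)$ is a \emph{bounded} martingale and hence Doob's martingale convergence theorem gives $F:=\lim_{n}M_n$ pointwise $\mu^{(M)}$-a.s. and in $L^{1}$. Shift-invariance is immediate: $F\circ\sigma=\lim_n h(\pi_{n+1})=F$ a.s. Dominated convergence and the martingale identity yield
\[
h(x)=\mathbb{E}(M_n\mid\pi_0=x)\xrightarrow[\;n\to\infty\;]{}\mathbb{E}(F\mid\pi_0=x),
\]
which is exactly the reproduction formula (\ref{eq:cy3}).

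\emph{Bijectivity.} The two maps are inverse on the nose. If $F$ is shift-invariant and $h(x)=\mathbb{E}(F\mid\pi_0=x)$, then by Lévy's theorem $\mathbb{E}(F\mid\mathcal{F}_n)=\mathbb{E}(F\mid\pi_n)=h(\pi_n)=M_n$, and the martingale limit recovers $F$ $\mu^{(M)}$-a.s. Conversely, starting from $h$, the constructed $F$ reproduces $h$ by the calculation above. Hence the assignment $h\leftrightarrow F$ is a bijection (modulo null sets on the $\Omega$-side).

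\emph{Main obstacle.} The delicate point is not the martingale convergence itself, but keeping careful track of \emph{which} finite-energy class one lands in: one must ensure that the $L^{1}$ shift-invariant $F$ produced from an $\mathscr{H}_E$-harmonic $h$ really is (essentially) bounded, so that the forward map applied to it returns to $\mathscr{H}_E$ and not merely to a larger space of harmonic functions. This is where the boundedness of $d_{res}$ and Theorem \ref{thm:algebra} do the heavy lifting; without those, the identification fails at the level of finite energy. A secondary subtlety is the dependence of $\mu_x^{(M)}$ on the base point $x$; shift-invariance of $F$ must be understood $\mu_x^{(M)}$-a.e.\ for every $x$, which follows from the Markov structure (\ref{eq:cy2}) because all these measures are mutually absolutely continuous on cylinders based at connected vertices.
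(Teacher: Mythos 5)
Your core construction is the standard martingale correspondence, and in spirit it matches the paper: the paper's own proof consists of nothing more than the identity $(\Delta h)(x)=c(x)\bigl(h(x)-(\mathbb{P}h)(x)\bigr)$ --- so that $\Delta$-harmonicity is the same as $\mathbb{P}h=h$ --- together with a citation to the literature for the probabilistic statement; your argument supplies essentially what that citation contains. Your forward direction (Markov property plus shift-invariance forces $\mathbb{P}h=h$), your backward direction (finite energy $\Rightarrow$ bounded via Theorem \ref{thm:algebra}, then bounded martingale convergence produces $F$), and the mutual inversion via L\'evy's theorem are all correct as far as they go.

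There is, however, a genuine gap, and your closing paragraph points at the wrong place. Boundedness of the $F$ you build from a finite-energy harmonic $h$ is not the delicate point: it is automatic, since $|F|=\lim_n|h(\pi_n)|\le\Vert h\Vert_{\infty}$ almost surely. The unproved step is the opposite one, which a ``1-1 correspondence'' with \emph{all} shift-invariant $L^{1}$ functions requires: that for an arbitrary shift-invariant $F\in L^{1}$ (even a bounded one) the harmonic function $h_{F}=\mathbb{E}(F\mid\pi_{0}=\cdot)$ has \emph{finite energy}. Theorem \ref{thm:algebra} cannot ``do the heavy lifting'' here because its implication runs the wrong way: finite energy $\Rightarrow$ bounded; it does not place bounded harmonic functions inside $\mathscr{H}_{E}$, and in general they are not there. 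Indeed, by (\ref{eq:gg3}) every finite-energy function is $\tfrac{1}{2}$-Lipschitz for $d_{res}$ and hence extends continuously to $M$; a bounded shift-invariant $F$ whose tail behavior is not a.e.\ that of such a continuous extension (e.g.\ an indicator-type invariant event on a Cantor-like boundary) yields a bounded harmonic $h_{F}$ with no continuous extension to $M$, hence of infinite energy. So your truncation/monotone-class sketch only reduces $L^{1}$ to $L^{\infty}$ and never produces the energy bound. What your argument actually proves is that $h\mapsto F$ injects the finite-energy harmonic functions into the shift-invariant $L^{1}$ functions, with $F\mapsto h_{F}$ as a left inverse; surjectivity --- the other half of the asserted correspondence --- is exactly the missing step. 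A secondary imprecision: $\mu_{x}^{(M)}$ and $\mu_{y}^{(M)}$ are mutually singular as measures on $\Omega$ (they are carried by $\Omega_{x}$ and $\Omega_{y}$ respectively), so ``mutually absolutely continuous on cylinders'' is not correct; what connectedness gives, and what your a.e.\ statements need, is comparability of their restrictions to the shift-invariant $\sigma$-algebra.
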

\begin{proof}
(see \cite{MR2358531}) use the formula
\begin{equation}
\left(\Delta h\right)\left(x\right)=c\left(x\right)\left(h\left(x\right)-\left(\mathbb{P}h\right)\left(x\right)\right),\quad x\in V,\label{eq:cy4}
\end{equation}
where 
\begin{equation}
\left(\mathbb{P}f\right)\left(x\right)=\sum_{y\sim x}p_{xy}f\left(y\right),\label{eq:cy5}
\end{equation}
and $p_{xy}=c_{xy}/c\left(x\right)$ for $\left(x,y\right)\in E$.\end{proof}
\begin{defn}
\emph{Class $A$} $\left(V,E,c,d_{res}\right)$: 
\begin{equation}
\lim_{k,l\rightarrow\infty}d_{res}\left(\pi_{k}\left(\omega\right),\pi_{l}\left(\omega\right)\right)=0\label{eq:ps6}
\end{equation}
for all $\omega\in\Omega$, or in a ``big'' subset of $\Omega$.\end{defn}
\begin{rem}
A large subset of Bratteli diagram will be of \emph{class A}, i.e.,
that (\ref{eq:ps6}) holds; for example, if 
\begin{equation}
\sum_{n}r\left(n\right)<\infty\label{eq:ps7}
\end{equation}
where $r\left(n\right)$ denotes the resistance $V_{n}\rightarrow V_{n+1}$
between vertices of level $n$ and level $n+1$. So (\ref{eq:ps7})$\Longrightarrow$(\ref{eq:ps6});
but (\ref{eq:ps6}) holds much more generally.\end{rem}
\begin{prop}
\label{prop:ps}Assume (\ref{eq:ps6}). Then there is a well defined
mapping: $\Omega\xrightarrow{\;\Phi\;}B$, given by
\begin{align}
\Omega\longrightarrow\left(\text{Cauchy-sequences}\right)\longrightarrow\left(\text{Cauchy-sequences}\right)/ & \sim\label{eq:ps9}
\end{align}
\begin{align}
\omega\longmapsto\Psi\left(\omega\right) & =\mbox{class}\left(\pi_{0}\left(\omega\right),\pi_{1}\left(\omega\right),\pi_{2}\left(\omega\right),\cdots\right)\label{eq:ps8}
\end{align}
where $\sim$ on Cauchy-sequences 
\begin{equation}
\left(\widetilde{x}\right)\sim\left(\widetilde{y}\right)\underset{\text{Def}}{\Longleftrightarrow}\lim_{i\rightarrow\infty}d_{res}\left(x_{i},y_{i}\right)=0.\label{eq:ps10}
\end{equation}
\end{prop}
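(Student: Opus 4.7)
The proof is essentially a direct unpacking of the definitions once one invokes the Cauchy hypothesis (\ref{eq:ps6}) and the standing assumption that vertices are isolated in $(V,d_{res})$ (condition (\ref{enu:as3}) in Section 6, which is in force throughout this section). I would proceed in three short steps.

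First, I would fix $\omega\in\Omega$ and consider the sequence $\sigma(\omega)=(\pi_0(\omega),\pi_1(\omega),\pi_2(\omega),\ldots)\subset V$. Because $\omega$ is a path, successive terms satisfy $(\pi_i(\omega),\pi_{i+1}(\omega))\in E$, so in particular $\pi_i(\omega)\neq\pi_{i+1}(\omega)$ for every $i$. By the hypothesis (\ref{eq:ps6}), $\sigma(\omega)$ is a $d_{res}$-Cauchy sequence in $V$, so by definition of $M$ it represents an equivalence class $[\sigma(\omega)]\in M$. Setting $\Phi(\omega):=[\sigma(\omega)]$ defines a map $\Omega\to M$; well-definedness on the $\Omega$-side is automatic since $\Phi(\omega)$ is specified by the particular vertex-sequence obtained from $\omega$, and any ambiguity in choosing a Cauchy representative is absorbed into the equivalence relation $\sim$ in (\ref{eq:ps10}).

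Next, I would show that $\Phi(\omega)\in B=M\setminus V$. Suppose for contradiction $\Phi(\omega)=\gamma(x)$ for some $x\in V$, where $\gamma$ is the embedding from (\ref{eq:md3}). Then by (\ref{eq:ps10}),
\[
\lim_{i\to\infty}d_{res}(\pi_i(\omega),x)=0.
\]
Invoking (\ref{enu:as3})/(\ref{eq:p2}), there exists $\varepsilon_x>0$ such that the open $d_{res}$-ball of radius $\varepsilon_x$ around $x$ is the singleton $\{x\}$. Hence for all sufficiently large $i$ we must have $\pi_i(\omega)=x$, and in particular $\pi_i(\omega)=\pi_{i+1}(\omega)=x$ for such $i$. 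This contradicts $(\pi_i(\omega),\pi_{i+1}(\omega))\in E\subset V\times V\setminus\{\text{diagonal}\}$. Therefore $\Phi(\omega)\in B$, as desired.

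The only nontrivial ingredient is the second step; this is where the combination of the Cauchy hypothesis (\ref{eq:ps6}) with vertex-isolation (\ref{enu:as3}) does the work. Everything else is bookkeeping: reading $\Phi$ through the two arrows in (\ref{eq:ps9}) amounts to composing ``form the vertex sequence'' with ``pass to the $\sim$-class'', both of which are tautological under the hypotheses. I do not anticipate any further obstacle, and no additional hypothesis beyond those stated for Section 6 and the class $A$ assumption (\ref{eq:ps6}) is needed.
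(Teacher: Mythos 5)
Your proof is correct. There is, in fact, nothing in the paper to compare it against: Proposition \ref{prop:ps} is stated with no proof at all (the text passes directly to Theorem \ref{thm:Mar}), so your write-up supplies an argument the authors treat as immediate bookkeeping. You correctly isolate the one nontrivial point, namely that the Cauchy class of $\left(\pi_{i}\left(\omega\right)\right)$ lies in $B=M\backslash V$ and not merely in $M$. Your contradiction argument---convergence to $\gamma\left(x\right)$ plus vertex isolation (\ref{eq:p2}) forces $\pi_{i}\left(\omega\right)=x=\pi_{i+1}\left(\omega\right)$ for large $i$, which is impossible since $E\subset V\times V\backslash\left\{ \text{diagonal}\right\} $---is exactly the mechanism the paper itself uses in the proposition preceding Theorem \ref{thm:poisson} (any vertex sequence converging to $\gamma\left(x\right)$ is eventually equal to $x$), so you could simply cite that result instead of re-deriving it. One refinement: your worry about whether (\ref{enu:as3}) is ``in force'' in Section \ref{sec:path} can be eliminated entirely, because vertex isolation is automatic under the paper's standing hypotheses. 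Indeed, taking $w=\delta_{x}/\sqrt{c\left(x\right)}$ in the formula $d_{res}\left(x,z\right)=\sup\left\{ \left|w\left(x\right)-w\left(z\right)\right|^{2}\::\: w\in\mathscr{H}_{E},\:\left\Vert w\right\Vert _{\mathscr{H}_{E}}\leq1\right\} $ of Theorem \ref{thm:metric}, and using $\left\Vert \delta_{x}\right\Vert _{\mathscr{H}_{E}}^{2}=c\left(x\right)$ from Lemma \ref{lem:Delta} (\ref{enu:D8}), gives $d_{res}\left(x,z\right)\geq1/c\left(x\right)>0$ for every $z\neq x$, where $c\left(x\right)<\infty$ by local finiteness. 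Hence every vertex is isolated with $\varepsilon_{x}=1/c\left(x\right)$, and your proof goes through using only the assumptions listed in Section \ref{sec:path} together with (\ref{eq:ps6}).
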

\begin{thm}
\label{thm:Mar}Let $V,E,c,p_{xy}=c_{xy}/c\left(x\right)$, $\mu_{x}^{\left(M\right)}$
Markov measure, and let $\Psi:\Omega\rightarrow B$ be the mapping
in (\ref{eq:ps8}) of Proposition \ref{prop:ps}. Then 
\begin{equation}
\left\{ \mu_{x}^{\left(M\right)}\circ\Psi^{-1}\right\} _{x\in V}\label{eq:ps11}
\end{equation}
constitutes the Poisson-measure on $B$ in Theorem \ref{thm:poisson};
i.e., if $S\in\mathscr{B}\left(B\right)$, $S\subset B$ is a given
Borel subset, then the measure in (\ref{eq:ps11}) is $\mu_{x}^{\left(M\right)}\left(\Psi^{-1}\left(S\right)\right)$,
where
\[
\Psi^{-1}\left(S\right)=\left\{ \omega\in\Omega\:\big|\:\Psi\left(\omega\right)\in S\right\} .
\]
\end{thm}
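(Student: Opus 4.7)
The plan is to verify directly that the pushforward measures $\nu_x := \mu_x^{(M)}\circ\Psi^{-1}$ satisfy the reproducing formula \eqref{eq:p5} of Theorem~\ref{thm:poisson}, so that they can serve as the Poisson measures on $B$. The core tool is the martingale convergence theorem applied to bounded harmonic functions, combined with continuity of the extensions $\widetilde{h}\in C(M)$ obtained via the bounded resistance metric.

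First I would observe that $\Psi$ is well-defined and takes values in $B$ by Proposition~\ref{prop:ps} together with the standing isolation hypothesis~\eqref{enu:as3}: under the class~$A$ condition~\eqref{eq:ps6} the path $(\pi_n(\omega))$ is Cauchy in $(V,d_{\text{res}})$ for $\mu_x^{(M)}$-a.e.\ $\omega$, so $\Psi(\omega)\in M$ is defined as the metric limit; since the vertices are $d_{\text{res}}$-isolated in $M$ and the Markov chain has no loops, the limit cannot be a vertex, giving $\Psi(\omega)\in B$ a.s. Next, let $h\in\mathscr{H}_E$ be harmonic; by Theorem~\ref{thm:algebra} (bounded $d_{\text{res}}$) $h$ is bounded on $V$. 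Define $M_n(\omega):=h(\pi_n(\omega))$. Using \eqref{eq:cy4} and $\mathbb{P}h=h$, the Markov property yields
\begin{equation*}
\mathbb{E}_x\bigl[M_{n+1}\,\big|\,\mathcal{F}_n\bigr]=\sum_{y\sim\pi_n}p_{\pi_n,y}\,h(y)=(\mathbb{P}h)(\pi_n)=h(\pi_n)=M_n,
\end{equation*}
so $(M_n)$ is a bounded $\mathcal{F}_n$-martingale under $\mu_x^{(M)}$, where $\mathcal{F}_n=\sigma(\pi_0,\ldots,\pi_n)$.

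By the martingale convergence theorem, $M_n\to M_\infty$ $\mu_x^{(M)}$-a.s.\ and in $L^1$. Since $\pi_n(\omega)\to\Psi(\omega)$ in $(M,\widetilde{d})$ and $\widetilde{h}$ is continuous on $M$, one has $M_n(\omega)=\widetilde{h}(\pi_n(\omega))\to\widetilde{h}(\Psi(\omega))$ a.s., forcing $M_\infty=\widetilde{h}\circ\Psi$ almost surely. Taking expectations under $\mu_x^{(M)}$ and applying the change-of-variables formula for the pushforward,
\begin{equation*}
h(x)=M_0=\mathbb{E}_x[M_\infty]=\int_\Omega\widetilde{h}\bigl(\Psi(\omega)\bigr)\,d\mu_x^{(M)}(\omega)=\int_B\widetilde{h}(b)\,d\nu_x(b),
\end{equation*}
which is precisely \eqref{eq:p5}. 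For a Borel set $S\subset B$, $\nu_x(S)=\mu_x^{(M)}(\Psi^{-1}(S))$ by definition, so the family $\{\nu_x\}_{x\in V}$ is a Poisson representation as claimed.

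The main obstacle is the first step, namely confirming that $\Psi$ really lands in $B$ on a set of full $\mu_x^{(M)}$-measure; this is where the class~$A$ hypothesis and the isolation property \eqref{enu:as3} are essential, and absent either one the pushforward would not define a Borel measure on $B$. Once this measurability/support point is settled, the argument is standard martingale theory, with boundedness of $h$ (from Theorem~\ref{thm:algebra}) used both to ensure $L^1$-boundedness and to justify interchange of limit and expectation. A minor subtlety is uniqueness: Theorem~\ref{thm:poisson} was proved via Hahn--Banach and so the Poisson kernel was not asserted to be unique, but the calculation above exhibits $\nu_x$ as a concrete, canonical choice --- the one coming directly from the Markov random walk on $G$.
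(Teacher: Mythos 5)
Your proof is correct, and it is essentially a completion of the paper's own argument, which is given only as a sketch. The paper sets $\mu_x:=\mu_x^{(M)}\circ\Psi^{-1}$, states that one must verify the reproducing identity $h(x)=\int_B\widetilde{h}\,d\mu_x$ for all finite-energy harmonic $h$, asserts this ``can be checked directly,'' and then concludes by appealing to uniqueness of the Poisson measure. Your bounded-martingale argument is precisely the missing direct check: $M_n=h(\pi_n)$ is a martingale because $\mathbb{P}h=h$, it is bounded because $\mathscr{H}_E\subset l^\infty(V)$ when $d_{res}$ is bounded, it converges a.s.\ and in $L^1$, and the limit is identified with $\widetilde{h}\circ\Psi$ using continuity of $\widetilde{h}$ on $M$ together with the class-$A$ Cauchy property of paths; taking expectations yields the identity. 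You also supply two points the paper glosses over: first, that $\Psi$ lands in $B$ (not merely in $M$) almost surely, which requires the isolation hypothesis \eqref{eq:p2} combined with the absence of self-loops so that a path cannot stabilize at a vertex; second, that the ``uniqueness'' invoked at the end of the paper's sketch is not actually furnished by Theorem \ref{thm:poisson}, whose proof proceeds via a Banach--Krein extension that need not be unique. Your formulation --- exhibiting $\nu_x$ as a concrete measure satisfying the Poisson formula \eqref{eq:p5} --- is the defensible way to state the conclusion, whereas the paper's plan of matching its pushforward against the previously constructed measure ``by uniqueness'' has a gap at exactly that step.
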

\begin{proof}
(sketch) Set $\mu_{x}:=\mu_{x}^{\left(M\right)}\circ\Psi^{-1}$, we
then need to prove that 
\begin{equation}
h\left(x\right)=\int_{B}\widetilde{h}\: d\mu_{x}\label{eq:ps12}
\end{equation}
holds for all harmonic function $h\in\mathscr{H}_{E}$, i.e., $\left\Vert h\right\Vert _{\mathscr{H}_{E}}<\infty$,
$\Delta h=0$ ($\Longleftrightarrow\mathbb{P}h=h$), and where $\widetilde{h}\in C\left(B\right)$
is the restriction to $B$ of the extension from 
\[
\underset{h}{V}\longrightarrow\underset{\widetilde{h}}{M}\longrightarrow\underset{\widetilde{h}\big|_{B}}{B}
\]
With this, we can check directly that $\mu_{x}$ satisfies (\ref{eq:ps12}),
and so $\mu_{x}$ must be the Poisson-measure by uniqueness.
\end{proof}

\section{\label{sec:bint}Boundary and interpolation}
\begin{thm}
\label{thm:bp}Let $V,E,c,\Delta,d_{res},\mathscr{H}_{E}$, and $B$
be as above (see (\ref{enu:ps1})-(\ref{enu:ps10}) in Section \ref{sec:path}).
We pick a base-point $o\in V$, and dipoles $v_{x}=v_{\left(xo\right)}$
s.t. $v_{x}\left(o\right)=0$, and we set 
\begin{equation}
K\left(x,y\right)=\left\langle v_{x},v_{y}\right\rangle _{\mathscr{H}_{E}}=v_{x}\left(y\right)=v_{y}\left(x\right),\label{eq:bp1}
\end{equation}
the Green's function for $\Delta$. Finally, set $Q:=Q_{Harm}$ denote
the projection of $\mathscr{H}_{E}$ onto the subspace $Harm=\left\{ h\in\mathscr{H}_{E}\:|\:\Delta h=0\right\} $.
For $x\in V$, let $\mu_{x}$ denote the Poisson-measure. 

Then we have the following interpolation/boundary formula:
\begin{equation}
f\left(x\right)=\sum_{y\in V\backslash\left\{ o\right\} }K\left(x,y\right)\left(\Delta f\right)\left(y\right)+\int_{B}\widetilde{\left(Qf\right)}\left(b\right)\: d\mu_{x}\left(b\right)\label{eq:bp2}
\end{equation}
valid for all $f\in\mathscr{H}_{E}$, and all $x\in V$. \end{thm}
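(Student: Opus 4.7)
The plan is to exploit the Royden-type orthogonal decomposition
$$\mathscr{H}_E \;=\; \operatorname{Fin} \oplus \operatorname{Harm}, \qquad \operatorname{Fin} := \overline{\operatorname{span}}^{\mathscr{H}_E}\{\delta_y : y \in V\},$$
(see Remark~\ref{rem:sp}) in tandem with the Poisson integral formula of Theorem~\ref{thm:poisson}. Given $f\in\mathscr{H}_E$, write $f = f_0 + h$ where $f_0 \in \operatorname{Fin}$ and $h = Qf \in \operatorname{Harm}$. Since harmonic functions are killed by $\Delta$, we have $\Delta f = \Delta f_0$ pointwise on $V$. Fix the basepoint normalisation $f_0(o) = h(o)= 0$ so the decomposition is pointwise as well as in $\mathscr{H}_E$. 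The boundary summand in (\ref{eq:bp2}) is then immediate from Theorem~\ref{thm:poisson} applied to $h$:
$$h(x) \;=\; \int_B \widetilde{h}(b)\, d\mu_x(b) \;=\; \int_B \widetilde{(Qf)}(b)\,d\mu_x(b).$$

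The heart of the argument is to establish the \emph{bulk} identity
\begin{equation}
f_0(x) \;=\; \sum_{y \in V'} K(x,y)\,(\Delta f_0)(y) \qquad (x \in V),\label{eq:planFin}
\end{equation}
for every $f_0 \in \operatorname{Fin}$ normalised by $f_0(o)=0$. First I would check (\ref{eq:planFin}) on the algebraic span $\operatorname{span}\{\delta_w: w\in V'\}$, which is dense in $\operatorname{Fin}$: for $f_0 = \delta_w$ with $w\in V'$, property (\ref{enu:D5}) gives $(\Delta \delta_w)(y) = \langle\delta_y,\delta_w\rangle_{\mathscr{H}_E} = \Delta_{yw}$, and the formal inverse identity (\ref{eq:m14}) $\sum_{y\in V'} K_{xy}\Delta_{yw} = \delta_{x,w}$ (valid on $V'\times V'$) yields \eqref{eq:planFin}; when $x=o$ both sides vanish thanks to $v_o=0$, i.e.\ $K(o,\cdot)\equiv 0$. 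For a general $f_0\in\operatorname{Fin}$ I would pick finitely supported $g_n\to f_0$ in $\mathscr{H}_E$-norm, use Cauchy--Schwarz together with $K(x,y)=\langle v_x,v_y\rangle_{\mathscr{H}_E}$ to dominate the tails of the $y$-sum, and appeal to $(\Delta u)(y)=\langle\delta_y,u\rangle_{\mathscr{H}_E}$ (again \eqref{enu:D5}) to transfer $\mathscr{H}_E$-convergence of $g_n\to f_0$ into coordinate-wise convergence of $\Delta g_n\to\Delta f_0$. Adding (\ref{eq:planFin}) to the Poisson piece above, and using $\Delta f = \Delta f_0$, produces exactly (\ref{eq:bp2}).

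The step I expect to be most delicate is the passage to the limit in (\ref{eq:planFin}) for general $f_0\in\operatorname{Fin}$: one has to exchange an infinite sum over $V'$ with an $\mathscr{H}_E$-limit, and simultaneously reconcile the pointwise normalisation ($f_0(o)=0$) with the fact that $\mathscr{H}_E$ only sees $f_0$ modulo constants. Concretely, one must verify that the tail $\sum_{|y|>N}K(x,y)(\Delta g_n)(y)$ is small \emph{uniformly} in $n$, and that the partial-sum operator $S_N(x,y)=K(x,y)\mathbbm 1_{\{y\in V_N'\}}$ remains a bounded reproducing device on $\operatorname{Fin}$. The boundedness of $d_{res}$ enters here: it gives $\|v_x\|_{\mathscr{H}_E}^2 = d_{res}(x,o)\le\operatorname{diam}(M)$, hence a uniform bound on $K(x,\cdot)$ in $\mathscr{H}_E$ via $\|K(x,\cdot)\|$-type estimates, which together with the inverse relations (\ref{eq:m13})--(\ref{eq:m14}) supplies exactly the "Green's identity" needed to run the approximation argument and to control the potentially subtle contribution of the $y=o$ term that is excluded from the sum.
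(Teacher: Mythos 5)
Your proposal has the same skeleton as the paper's proof: split $f=Q^{\perp}f+Qf$ along the Royden decomposition (\ref{eq:cd1}), identify $Q^{\perp}f$ with the Green-kernel sum, and apply Theorem \ref{thm:poisson} to $Qf$. The difference is that the paper imports the projection formula (\ref{eq:bp3}) from \cite{AJLM13,Jor11} and then checks algebraically that it defines a self-adjoint idempotent killing $Harm$, whereas you try to derive the ``bulk'' identity by verifying it on Dirac masses and passing to limits. Your Dirac-level verification is correct (it is exactly (\ref{eq:m14}), via Lemma \ref{lem:Delta} (\ref{enu:D5})--(\ref{enu:D7})). But the extension step you flag as delicate is not merely delicate: it is false in the stated generality, and no tail estimate can repair it.

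Here is the obstruction. Suppose the network is transient, i.e.\ there is a finite-energy monopole $w_{o}\in\mathscr{H}_{E}$ with $\Delta w_{o}=\delta_{o}$, and put $u_{0}:=w_{o}-Qw_{o}$, normalized so that $u_{0}(o)=0$. By the Royden decomposition, $u_{0}$ lies in $Fin:=\overline{span}^{\mathscr{H}_{E}}\left\{ \delta_{w}\::\: w\in V\right\} $, and for every $y\in V'$ we have $\left\langle \delta_{y},u_{0}\right\rangle _{\mathscr{H}_{E}}=\left(\Delta u_{0}\right)\left(y\right)=\delta_{o}\left(y\right)=0$, while $u_{0}\neq0$ because $\left(\Delta u_{0}\right)\left(o\right)=1$. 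So $u_{0}$ is a nonzero vector of $Fin$ orthogonal to $span\left\{ \delta_{w}\::\: w\in V'\right\} $ --- your density claim fails --- and $u_{0}$ is a direct counterexample to the bulk identity itself: its right-hand side $\sum_{y\in V'}K\left(x,y\right)\left(\Delta u_{0}\right)\left(y\right)$ vanishes identically (the only nonzero coefficient sits at the excluded vertex $y=o$), while the left-hand side $u_{0}\left(x\right)$ does not. This shows concretely why the sum/limit interchange cannot be saved: for finitely supported $g_{n}\rightarrow u_{0}$ your identity holds for each $n$ and each coefficient $\left(\Delta g_{n}\right)\left(y\right)=\left\langle \delta_{y},g_{n}\right\rangle \rightarrow0$, $y\in V'$, yet the sums stay at $u_{0}\left(x\right)$. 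Moreover this regime is unavoidable: whenever $Harm\neq\left\{ 0\right\} $ --- the only case in which the boundary integral in (\ref{eq:bp2}) carries information --- the network is transient and finite-energy monopoles exist (see \cite{JP10}); for instance, in the paper's binary-tree example with $\sum_{n}1/c_{\pm}\left(n\right)<\infty$, routing a unit current from the root equally down the tree has energy $\sum_{n}2^{-\left(n+2\right)}\left(1/c_{+}\left(n\right)+1/c_{-}\left(n\right)\right)<\infty$. The missing ingredient is thus a treatment of the one-dimensional ``monopole at $o$'' component $Fin\ominus\overline{span}\left\{ \delta_{w}\::\: w\in V'\right\} $, e.g.\ by adding a term $\left(\Delta f\right)\left(o\right)w_{o}\left(x\right)$ to the right-hand side. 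Be aware, finally, that the same example undermines the paper's own Lemma after (\ref{eq:bp3}): the operator $A$ there fixes every Dirac mass (including $\delta_{o}=-\sum_{y\sim o}c_{oy}v_{y}$) yet annihilates $u_{0}$, which lies in the closure of their span, so $A$ is not $\mathscr{H}_{E}$-continuous and cannot coincide with the orthogonal projection onto $\mathscr{H}_{E}\ominus Harm$; your attempt exposes a genuine gap in the theorem's justification, not merely a defect of your route around the citation.
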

\begin{proof}
From \cite{AJLM13,Jor11}, we have that the projection $Q^{\perp}=I_{\mathscr{H}_{E}}-Q$
is given by 
\begin{equation}
\left(Q^{\perp}f\right)=\sum_{y\in V}\left(\Delta f\right)\left(y\right)v_{y}=\sum_{y\in V}\underset{\text{Dirac-notation}}{\underbrace{\left|v_{y}\left\rangle \right\langle \delta_{y}\right|}}\left(f\right);\label{eq:bp3}
\end{equation}
or equivalently,
\begin{equation}
\left(Q^{\perp}f\right)\left(x\right)=\sum_{y\in V\backslash\left\{ o\right\} }K\left(x,y\right)\left(\Delta f\right)\left(y\right),\quad\forall x\in V.\label{eq:bp4}
\end{equation}
Since $f=\left(Q^{\perp}f\right)+\left(Qf\right)$ with $Qf\in Harm\left(\subset\mathscr{H}_{E}\right)$,
the desired formula (\ref{eq:bp2}) follows from the Poisson-representation:
\[
\left(Qf\right)\left(x\right)=\int_{B}\widetilde{\left(Qf\right)}\left(b\right)\: d\mu_{x}\left(b\right).
\]
We have used the following:\end{proof}
\begin{lem}
The operator $A=Q^{\perp}$ in (\ref{eq:bp3}) indeed is a projection
in $\mathscr{H}_{E}$, i.e., $A^{2}=A=A^{*}$ where the adjoint \uline{$*$}
is computed w.r.t. the $\mathscr{H}_{E}$-inner product. \end{lem}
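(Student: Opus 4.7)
The plan is to identify $A$ with the orthogonal projection $P_{Fin}$ of $\mathscr{H}_{E}$ onto the closed ``finite-energy'' subspace $Fin := \overline{\mathrm{span}\{\delta_{x} : x \in V\}}^{\mathscr{H}_{E}}$. Once this identification is established, the properties $A^{2}=A$ and $A^{*}=A$ follow immediately from general Hilbert-space theory. The Royden decomposition recalled in Remark~\ref{rem:sp}, $\mathscr{H}_{E} = Fin \oplus Harm$ (orthogonal sum), then identifies $P_{Fin}$ with $I_{\mathscr{H}_{E}} - Q = Q^{\perp}$, exactly matching the left-hand side of (\ref{eq:bp3}).

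The central computation is $A\delta_{x} = \delta_{x}$ for every $x \in V$. From the definition (\ref{eq:lap}) one reads off
\[
(\Delta \delta_{x})(u) = \begin{cases} c(x), & u = x, \\ -c_{xu}, & u \sim x, \\ 0, & \text{otherwise}, \end{cases}
\]
so that in the formula (\ref{eq:bp3}) only finitely many terms contribute, giving
\[
A\delta_{x} \;=\; c(x)\, v_{x} \;-\; \sum_{y \sim x} c_{xy}\, v_{y}.
\]
Lemma~\ref{lem:Delta}(\ref{enu:D7}) (with the convention $v_{o} = 0$ which covers the base-point) identifies the right-hand side with $\delta_{x}$. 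By linearity, $A$ acts as the identity on the finitely supported functions $\mathscr{D}_{2} = \mathrm{span}\{\delta_{x} : x \in V\}$. On the other hand, for $h \in Harm$ one has $(\Delta h)(y) = 0$ for every $y$, hence $Ah = 0$ directly from the defining formula.

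It remains to extend $A$ from $\mathscr{D}_{2} + Harm$ to a bounded operator on $\mathscr{H}_{E}$ and identify it with $P_{Fin}$. Because $\mathscr{D}_{2}$ is dense in $Fin$ and $Harm = Fin^{\perp}$, the sum $\mathscr{D}_{2} + Harm$ is dense in $\mathscr{H}_{E}$. For any $\varphi \in \mathscr{D}_{2}$, $h \in Harm$, the two summands are $\mathscr{H}_{E}$-orthogonal, so
\[
\|A(\varphi + h)\|_{\mathscr{H}_{E}} \;=\; \|\varphi\|_{\mathscr{H}_{E}} \;\leq\; \|\varphi + h\|_{\mathscr{H}_{E}},
\]
so $A$ is a contraction on this dense subspace and admits a unique continuous extension to $\mathscr{H}_{E}$. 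This extension is the identity on $Fin = \overline{\mathscr{D}_{2}}$ and vanishes on $Harm$; by the orthogonality of the Royden decomposition, it coincides with $P_{Fin} = Q^{\perp}$, which proves the lemma. The main technical obstacle is the rigorous interpretation of the potentially divergent formal sum $\sum_{y} (\Delta f)(y)\, v_{y}$ for arbitrary $f \in \mathscr{H}_{E}$; the contraction-extension argument above sidesteps this obstacle by pinning $A$ down on a dense subspace where the sum is manifestly finite or vanishes, and appealing to the Royden decomposition to identify the unique bounded extension.
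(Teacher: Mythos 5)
Your proof is correct, but it takes a genuinely different route from the paper's. The paper works directly on the formula (\ref{eq:bp3}) in Dirac notation: it gets idempotence $A^{2}=A$ from the biorthogonality relation $\left\langle \delta_{x},v_{y}\right\rangle _{\mathscr{H}_{E}}=\delta_{xy}$ (valid for $x,y\in V'$, via Lemma \ref{lem:Delta}(\ref{enu:D5}) and $\Delta v_{y}=\delta_{y}-\delta_{o}$), gets self-adjointness from the $l^{2}$-symmetry of $\Delta$, Lemma \ref{lem:Delta}(\ref{enu:D1}), through $\left\langle f,Ag\right\rangle _{\mathscr{H}_{E}}=\sum_{x}\overline{f\left(x\right)}\left(\Delta g\right)\left(x\right)=\left\langle Af,g\right\rangle _{\mathscr{H}_{E}}$, and only afterwards identifies the range with $\mathscr{H}_{E}\ominus Harm=\overline{span}\left\{ \delta_{x}\:|\:x\in V\right\} $. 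You reverse the logic: you first pin $A$ down as the orthogonal projection onto $Fin=\overline{span}\left\{ \delta_{x}\right\} $ by computing it on the dense subspace $\mathscr{D}_{2}+Harm$ (using (\ref{enu:D7}) for $A\delta_{x}=\delta_{x}$, and $\Delta h=0$ for $Ah=0$) and extending by continuity, after which $A^{2}=A=A^{*}$ is automatic. What your route buys is rigor: every sum you manipulate is finite, whereas the paper's double sums and $l^{2}$-pairings are formal and their convergence and rearrangement are never justified. The price, which you flag yourself, is that you prove the unique bounded extension of (\ref{eq:bp3}) from $\mathscr{D}_{2}+Harm$ equals $Q^{\perp}$, rather than that the series converges to $Q^{\perp}f$ for every $f\in\mathscr{H}_{E}$; the paper is no more careful on this point, and both arguments ultimately rest on Lemma \ref{lem:Delta}(\ref{enu:D5}), i.e., on the Royden splitting $Fin^{\perp}=Harm$ of Remark \ref{rem:sp}.

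One step does need an extra line. Lemma \ref{lem:Delta}(\ref{enu:D7}) is stated only for $x\in V'$, so the convention $v_{o}=0$ does not by itself give $A\delta_{o}=\delta_{o}$, i.e., $\delta_{o}=-\sum_{y\sim o}c_{oy}v_{y}$ in $\mathscr{H}_{E}$. This is not pedantry: without it, your contraction argument only shows that $A$ is the identity on $\overline{span}\left\{ \delta_{x}\:|\:x\in V'\right\} $, which can be a \emph{proper} subspace of $Fin$ (on transient networks a finite-energy monopole $w$, $\Delta w=\delta_{o}$, is orthogonal to every $\delta_{x}$, $x\in V'$, but not to $\delta_{o}$). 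Fortunately the missing identity holds, by the same cancellation that underlies (\ref{enu:D7}): for every $u\in\mathscr{H}_{E}$,
\[
\Big\langle -\sum_{y\sim o}c_{oy}v_{y},\,u\Big\rangle _{\mathscr{H}_{E}}=-\sum_{y\sim o}c_{oy}\left(u\left(y\right)-u\left(o\right)\right)=c\left(o\right)u\left(o\right)-\sum_{y\sim o}c_{oy}u\left(y\right)=\left(\Delta u\right)\left(o\right)=\left\langle \delta_{o},u\right\rangle _{\mathscr{H}_{E}},
\]
using the dipole property (\ref{eq:dipole}), $c\left(o\right)=\sum_{y\sim o}c_{oy}$, and Lemma \ref{lem:Delta}(\ref{enu:D5}); hence $\delta_{o}=-\sum_{y\sim o}c_{oy}v_{y}$. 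With this line added, your identification $A=P_{Fin}=Q^{\perp}$ is complete. Note that the paper's own proof leaves the very same detail implicit when it asserts that the range of $A$ is all of $\mathscr{H}_{E}\ominus Harm$, so your proposal, once patched, is actually the tighter of the two arguments.
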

\begin{proof}
We have $A=\sum_{x}\left|v_{x}\left\rangle \right\langle \delta_{x}\right|$,
and so 
\begin{align*}
A^{2} & =\sum_{x}\sum_{y}\left(\left|v_{x}\left\rangle \right\langle \delta_{x}\right|\right)\left(\left|v_{y}\left\rangle \right\langle \delta_{y}\right|\right)\\
 & =\sum_{x}\sum_{y}\left\langle \delta_{x},v_{y}\right\rangle _{\mathscr{H}_{E}}\left|v_{x}\left\rangle \right\langle \delta_{y}\right|\\
 & =\sum_{x}\sum_{y}\delta_{xy}\left|v_{x}\left\rangle \right\langle \delta_{y}\right|=\sum_{x}\left|v_{x}\left\rangle \right\langle \delta_{x}\right|=A.
\end{align*}
But we also have for $f,g\in\mathscr{H}_{E}$, that 
\begin{align*}
\left\langle f,Ag\right\rangle _{\mathscr{H}_{E}} & =\sum_{x}\overline{f\left(x\right)}\left(\Delta g\right)\left(x\right)\\
 & =\sum_{x}\overline{\left(\Delta f\right)\left(x\right)}g\left(x\right)=\left\langle Af,g\right\rangle _{\mathscr{H}_{E}},
\end{align*}
where we use Lemma \ref{lem:Delta} (\ref{enu:D1}), so $A=A^{*}$. 

From this, we get operator-norm $\left\Vert A\right\Vert _{\mathscr{H}_{E}\rightarrow\mathscr{H}_{E}}=1$.
It is immediate from (\ref{eq:bp3}) that $Ah=0$ for all $h\in Harm$,
and further that $A=\text{proj}$ onto $\mathscr{H}_{E}\ominus Harm$.
Recall $\mathscr{H}_{E}\ominus Harm=\mathscr{H}_{E}$-norm closure
of $\left\{ \delta_{x}\:|\: x\in V\right\} $.\end{proof}
\begin{rem}
Note that the function $K\left(\cdot,\cdot\right)$ from (\ref{eq:bp1})-(\ref{eq:bp2})
is a Green's function of the Laplacian $\Delta$. Recall $\Delta$
from Lemma \ref{lem:Delta} has the following $\infty\times\infty$
matrix-representation:
\begin{equation}
\Delta_{xy}=\begin{cases}
c\left(x\right) & \text{if \ensuremath{x=y}}\\
-c_{xy} & \text{if \ensuremath{x\neq y}but \ensuremath{\left(xy\right)\in E}}\\
0 & \text{otherwise};
\end{cases}\label{eq:lg1}
\end{equation}
i.e., as an $\infty\times\infty$ matrix $\Delta$ has the following
banded form
\begin{equation}
\Delta=\begin{pmatrix}\ddots & \ddots\\
\ddots & \ddots & -c_{xz} &  & \substack{\scalebox{2.5}{0}}
\\
 & -c_{xz} & c\left(x\right) & -c_{xy}\\
\substack{\scalebox{2.5}{0}}
 &  & -c_{xy} & \ddots & \ddots\\
 &  &  & \ddots & \ddots
\end{pmatrix}=C-E\label{eq:lg2}
\end{equation}
where $C=\mbox{diag}\left\{ c\left(x\right)\:|\: x\in V\right\} $
\begin{equation}
C=\begin{pmatrix}c\left(x\right) &  & \substack{\scalebox{2.5}{0}}
\\
 & \ddots\\
\substack{\scalebox{2.5}{0}}
 &  & c\left(x'\right)\\
 &  &  & \ddots
\end{pmatrix}\label{eq:lg3}
\end{equation}
and 
\begin{equation}
E=\begin{pmatrix}0 & \ddots\\
\ddots & \ddots & c_{xy}\\
 & c_{xy} & 0 & \ddots\\
 &  & \ddots & \ddots
\end{pmatrix};\label{eq:lg4}
\end{equation}
both $\infty\times\infty$ matrices, $C$ with the sequence $\left(c\left(x\right)\right)_{x\in V}$
as diagonal entries, and $E$ with $0$ in the diagonal but with $\left(c_{xy}\right)$
as entries in the off diagonal entries.

One checks from Lemma \ref{lem:Delta}, that the Green's inversion
then holds:
\begin{equation}
\sum_{z\in V'}\Delta_{xz}K\left(z,y\right)=\delta_{x,y},\quad\forall\left(x,y\right)\in V'\times V'\label{eq:lg5}
\end{equation}
where $K\left(\cdot,\cdot\right)$ in (\ref{eq:lg5}) is the $\infty\times\infty$
matrix introduced in (\ref{eq:bp1}). So infimum about the resistance
metric results from an inversion of the matrix $\left(\Delta_{xy}\right)$
in (\ref{eq:lg2}) above.\end{rem}
\begin{cor}
For every $f\in\mathscr{H}_{E}$ with $f\left(o\right)=0$, we have
the following representation:
\begin{equation}
\left\Vert f\right\Vert _{\mathscr{H}_{E}}^{2}=\left\langle f,\Delta f\right\rangle _{l^{2}}+\int_{B_{Markov}}\big|\widetilde{Qf}\big|^{2}d\mu^{\left(Markov\right)}\label{eq:bp5}
\end{equation}
where 
\begin{equation}
\left\langle f,\Delta f\right\rangle _{l^{2}}=\sum_{x\in V}\overline{f\left(x\right)}\left(\Delta f\right)\left(x\right)\label{eq:bp6}
\end{equation}
and where $\mu^{\left(Markov\right)}$ is the Markov measure from
Theorem \ref{thm:Mar}.\end{cor}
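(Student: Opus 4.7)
The strategy is to split $f$ via the Royden decomposition and treat each summand separately, using the orthogonality of the splitting $\mathscr{H}_E = Harm \oplus \overline{\operatorname{span}}\{\delta_x\}$. Writing $f = Q^{\perp}f + Qf$ with $Q$ the projection onto $Harm$ and $Q^{\perp} = I - Q$, we get
\begin{equation*}
\|f\|_{\mathscr{H}_E}^2 \;=\; \|Q^\perp f\|_{\mathscr{H}_E}^2 \;+\; \|Qf\|_{\mathscr{H}_E}^2,
\end{equation*}
so the corollary reduces to matching each summand with the corresponding term of (\ref{eq:bp5}).

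For the first piece, I would apply the Dirac-type formula (\ref{eq:bp3}) which expands $Q^\perp f = \sum_y (\Delta f)(y)\,v_y$. Then $\|Q^\perp f\|_{\mathscr{H}_E}^2 = \langle Q^\perp f, f\rangle_{\mathscr{H}_E}$ (by $A^2=A=A^*$), and the reproducing property $\langle v_y, f\rangle_{\mathscr{H}_E} = f(y) - f(o)$ together with the normalization $f(o) = 0$ yields
\begin{equation*}
\|Q^\perp f\|_{\mathscr{H}_E}^2 \;=\; \sum_{y \in V} \overline{(\Delta f)(y)}\, f(y) \;=\; \langle f, \Delta f\rangle_{l^2},
\end{equation*}
where the last equality uses the Hermiticity of $\Delta$ on $\mathscr{D}_2$, Lemma \ref{lem:Delta}\,(\ref{enu:D1}).

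For the second piece, set $h := Qf \in Harm$. Since $Q^\perp f$ lies in the $\mathscr{H}_E$-closure of $\operatorname{span}\{v_y : y \neq o\}$ and each $v_y$ is normalized so that $v_y(o) = 0$, one gets $(Q^\perp f)(o) = 0$, and hence $h(o) = f(o) - (Q^\perp f)(o) = 0$. Now I transfer the problem to path space: the process $M_n(\omega) := h(\pi_n(\omega))$ on $(\Omega_o, \mu_o^{(M)})$ is a martingale because $\mathbb{P}h = h$. Finite energy of $h$ yields $L^2$-boundedness of $(M_n)$, so by martingale convergence $M_n \to M_\infty$ a.s. and in $L^2$, with $M_\infty = \widetilde h \circ \Psi$ by Theorem \ref{thm:Mar}. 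Pythagoras applied to the orthogonal martingale increments gives
\begin{equation*}
\int_{\Omega_o} |M_\infty|^2\, d\mu_o^{(M)} \;=\; |h(o)|^2 + \sum_{n=0}^\infty \mathbb{E}\bigl[\,|M_{n+1} - M_n|^2\,\bigr],
\end{equation*}
and since $h(o)=0$, the integral on the left, after pushing forward by $\Psi$, is exactly $\int_B |\widetilde{h}|^2\, d\mu^{(Markov)}$. The task is then to identify the right-hand sum with $\|h\|_{\mathscr{H}_E}^2$, which is achieved by reversibility $c(x)p_{xy} = c_{xy}$, the Markov Green's function identity of Lemma \ref{lem:K}, and the harmonicity $\mathbb{P}h = h$.

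The main obstacle is precisely that last identification. The naive computation of $\sum_n \mathbb{E}[|M_{n+1}-M_n|^2]$ produces an asymmetric bilinear form $\sum_x \bigl(\sum_n (P^n)_{o,x}/c(x)\bigr) \sum_{y\sim x} c_{xy}|h(x)-h(y)|^2$, whose outer factor is $K(o,x)$ via Lemma \ref{lem:K}, \emph{not} the constant $1/2$ needed for the symmetric Dirichlet form. The reconciliation requires using $\mathbb{P}h = h$ as a telescoping relation (equivalently, summation-by-parts against $K$) to convert this expression into $\tfrac{1}{2}\sum_{(xy)\in E} c_{xy}|h(x)-h(y)|^2$. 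Carrying out this rearrangement carefully, using the symmetry $K(o,x) = \langle v_o, v_x\rangle_{\mathscr{H}_E}$ and the reproducing identity $\langle v_x, h\rangle_{\mathscr{H}_E} = h(x)$, is the technically delicate point of the argument.
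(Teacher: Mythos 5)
Your treatment of the first term coincides with the paper's own argument and is correct: the orthogonal splitting $f=Q^{\perp}f+Qf$, Pythagoras, the expansion (\ref{eq:bp3}), the reproducing property $\left\langle v_{y},f\right\rangle _{\mathscr{H}_{E}}=f\left(y\right)-f\left(o\right)$ together with the normalization $f\left(o\right)=0$, and Lemma \ref{lem:Delta}(\ref{enu:D1}) give $\Vert Q^{\perp}f\Vert_{\mathscr{H}_{E}}^{2}=\left\langle f,\Delta f\right\rangle _{l^{2}}$. The gap is in the second term. The paper does not attempt any path-space computation there: it obtains $\Vert Qf\Vert_{\mathscr{H}_{E}}^{2}=\int_{B}\big|\widetilde{Qf}\big|^{2}d\mu^{(Markov)}$, its (\ref{eq:bp8}), by invoking Proposition \ref{prop:ps} and Theorem \ref{thm:Mar} (the identification of the pushed-forward Markov measure with the Poisson measure), with a citation to \cite{Anc90}. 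You instead try to prove this identity from scratch by martingale theory, and you stop exactly at the point where the proof would have to close, namely the identification of $\sum_{n}\mathbb{E}\left[\left|M_{n+1}-M_{n}\right|^{2}\right]$ with $\Vert h\Vert_{\mathscr{H}_{E}}^{2}$ for $h=Qf$. Declaring this ``the technically delicate point'' is not a proof of it; as written, your argument is incomplete.

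Moreover, the deferred step cannot be rescued by the telescoping you describe. Your own computation correctly yields, for harmonic $h$ with $h\left(o\right)=0$,
\[
\int_{B}\big|\widetilde{h}\big|^{2}d\mu^{(Markov)}=\sum_{x\in V}\frac{G_{P}\left(o,x\right)}{c\left(x\right)}\sum_{y\sim x}c_{xy}\left|h\left(x\right)-h\left(y\right)\right|^{2},
\]
and the weight $G_{P}\left(o,x\right)/c\left(x\right)$ is genuinely different from the constant weight $\tfrac{1}{2}$ of the energy form; harmonicity $\mathbb{P}h=h$ does not bridge them. Concretely, take Example \ref{exa:Ntree} with $N=2$ and $b>1$ (binary tree, conductance $b^{n}$ on level-$n$ edges), and let $h$ be the finite-energy harmonic function with $h\left(o\right)=0$ whose boundary extension is $+1$ on the ends of the left subtree and $-1$ on the right. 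Then $\big|\widetilde{h}\big|\equiv1$ on $B$, so the boundary integral equals $1$, while a direct series-current computation gives $\Vert h\Vert_{\mathscr{H}_{E}}^{2}=2b-1$. Thus the equality you are trying to manufacture is not a formal consequence of harmonicity plus the Poisson representation: an exact boundary expression for the energy of a harmonic function is of Douglas type, i.e., a double boundary integral against a Na\"im-type kernel, not a single integral of $\big|\widetilde{h}\big|^{2}$. To prove the corollary in the paper's sense you must either quote (\ref{eq:bp8}) on the authority of Theorem \ref{thm:Mar} and \cite{Anc90}, as the paper does, or supply that missing kernel identity; your example-level computation is in fact evidence that this extra input is unavoidable.
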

\begin{proof}
First, by Theorem \ref{thm:bp} we have $f=Q^{\perp}f+Qf$ as an orthogonal
splitting, relative to the $\mathscr{H}_{E}$-inner product. Hence
\begin{equation}
\left\Vert f\right\Vert _{\mathscr{H}_{E}}^{2}=\left\Vert Q^{\perp}f\right\Vert _{\mathscr{H}_{E}}^{2}+\left\Vert Qf\right\Vert _{\mathscr{H}_{E}}^{2}.\label{eq:bp7}
\end{equation}
For the first term in (\ref{eq:bp7}), we have 
\begin{eqnarray*}
\left\Vert Q^{\perp}f\right\Vert _{\mathscr{H}_{E}}^{2} & = & \left\langle f,Q^{\perp}f\right\rangle _{\mathscr{H}_{E}}\\
 & \underset{\left(\text{by }\eqref{eq:bp3}\right)}{=} & \sum_{x}\left(\Delta f\right)\left(x\right)\left\langle f,v_{x}\right\rangle _{\mathscr{H}_{E}}\\
 & = & \sum_{x}\overline{f\left(x\right)}\left(\Delta f\right)\left(x\right)=\left\langle f,\Delta f\right\rangle _{l^{2}}.
\end{eqnarray*}
For the second term in (\ref{eq:bp7}), we get, using Proposition
\ref{prop:ps} and Theorem \ref{thm:Mar}, 
\begin{equation}
\left\Vert Qf\right\Vert _{\mathscr{H}_{E}}^{2}=\int_{B_{Markov}}\big|\widetilde{Qf}\big|^{2}d\mu^{\left(Markov\right)};\label{eq:bp8}
\end{equation}
see also \cite{Anc90}. The desired conclusion (\ref{eq:bp5}) now
follows.\end{proof}
\begin{acknowledgement*}
The co-authors thank the following colleagues for helpful and enlightening
discussions: Professors Daniel Alpay, Sergii Bezuglyi, Ilwoo Cho,
Ka Sing Lau, Daniel Lenz, Paul Muhly, Myung-Sin Song, Wayne Polyzou,
Keri Kornelson, and members in the Math Physics seminar at the University
of Iowa.

\bibliographystyle{amsalpha}
\bibliography{ref}

\newcommand{\etalchar}[1]{$^{#1}$}
\providecommand{\bysame}{\leavevmode\hbox to3em{\hrulefill}\thinspace}
\providecommand{\MR}{\relax\ifhmode\unskip\space\fi MR }
\providecommand{\MRhref}[2]{%
  \href{http://www.ams.org/mathscinet-getitem?mr=#1}{#2}
}
\providecommand{\href}[2]{#2}
\begin{thebibliography}{GHK{\etalchar{+}}14}

\bibitem[AJ15]{AJ15}
D.~{Alpay} and P.~{Jorgensen}, \emph{{Reproducing kernel Hilbert spaces
  generated by the binomial coefficients. Accepted. To appear.}}, to appear in
  the Illinois Journal of Mathematics (2015).

\bibitem[AJLM13]{AJLM13}
Daniel Alpay, Palle Jorgensen, Izchak Lewkowicz, and Itzik Marziano,
  \emph{Representation formulas for {H}ardy space functions through the {C}untz
  relations and new interpolation problems}, Multiscale signal analysis and
  modeling, Springer, New York, 2013, pp.~161--182. \MR{3024468}

\bibitem[AJSV13]{AJSV13}
Daniel Alpay, Palle Jorgensen, Ron Seager, and Dan Volok, \emph{On discrete
  analytic functions: products, rational functions and reproducing kernels}, J.
  Appl. Math. Comput. \textbf{41} (2013), no.~1-2, 393--426. \MR{3017129}

\bibitem[AJV14]{AJV14}
Daniel Alpay, Palle Jorgensen, and Dan Volok, \emph{Relative reproducing kernel
  {H}ilbert spaces}, Proc. Amer. Math. Soc. \textbf{142} (2014), no.~11,
  3889--3895. \MR{3251728}

\bibitem[AK12]{MR3025713}
Sergio Albeverio and Seiichiro Kusuoka, \emph{Diffusion processes in thin tubes
  and their limits on graphs}, Ann. Probab. \textbf{40} (2012), no.~5,
  2131--2167. \MR{3025713}

\bibitem[Anc90]{Anc90}
A.~Ancona, \emph{Th\'eorie du potentiel sur les graphes et les vari\'et\'es},
  \'{E}cole d'\'et\'e de {P}robabilit\'es de {S}aint-{F}lour {XVIII}---1988,
  Lecture Notes in Math., vol. 1427, Springer, Berlin, 1990, pp.~1--112.
  \MR{1100282 (92g:31012)}

\bibitem[BJKR00]{MR1804950}
Ola Bratteli, Palle E.~T. J{\o}rgensen, Ki~Hang Kim, and Fred Roush,
  \emph{Non-stationarity of isomorphism between {AF} algebras defined by
  stationary {B}ratteli diagrams}, Ergodic Theory Dynam. Systems \textbf{20}
  (2000), no.~6, 1639--1656. \MR{1804950 (2001k:46104)}

\bibitem[BJO04]{MR2030387}
Ola Bratteli, Palle E.~T. Jorgensen, and Vasyl{\cprime}
  Ostrovs{\cprime}ky{\u\i}, \emph{Representation theory and numerical
  {AF}-invariants. {T}he representations and centralizers of certain states on
  {$\mathscr{O}_d$}}, Mem. Amer. Math. Soc. \textbf{168} (2004), no.~797,
  xviii+178. \MR{2030387 (2005i:46069)}

\bibitem[BKY14]{MR3150704}
S.~Bezuglyi, J.~Kwiatkowski, and R.~Yassawi, \emph{Perfect orderings on finite
  rank {B}ratteli diagrams}, Canad. J. Math. \textbf{66} (2014), no.~1,
  57--101. \MR{3150704}

\bibitem[Bra72]{MR0312282}
Ola Bratteli, \emph{Inductive limits of finite dimensional {$C^{\ast}
  $}-algebras}, Trans. Amer. Math. Soc. \textbf{171} (1972), 195--234.
  \MR{0312282 (47 \#844)}

\bibitem[BV14]{MR3224442}
Christian Bayer and Bezirgen Veliyev, \emph{Utility maximization in a binomial
  model with transaction costs: a duality approach based on the shadow price
  process}, Int. J. Theor. Appl. Finance \textbf{17} (2014), no.~4, 1450022,
  27. \MR{3224442}

\bibitem[CXY15]{MR3290453}
Xiao Chang, Hao Xu, and Shing-Tung Yau, \emph{Spanning trees and random walks
  on weighted graphs}, Pacific J. Math. \textbf{273} (2015), no.~1, 241--255.
  \MR{3290453}

\bibitem[DJ07]{MR2358531}
Dorin~Ervin Dutkay and Palle E.~T. Jorgensen, \emph{Martingales, endomorphisms,
  and covariant systems of operators in {H}ilbert space}, J. Operator Theory
  \textbf{58} (2007), no.~2, 269--310. \MR{2358531 (2009h:47040)}

\bibitem[DJ11]{MR2811284}
\bysame, \emph{Affine fractals as boundaries and their harmonic analysis},
  Proc. Amer. Math. Soc. \textbf{139} (2011), no.~9, 3291--3305. \MR{2811284
  (2012e:28008)}

\bibitem[DJS12]{MR2863855}
Dorin~Ervin Dutkay, Palle E.~T. Jorgensen, and Sergei Silvestrov,
  \emph{Decomposition of wavelet representations and {M}artin boundaries}, J.
  Funct. Anal. \textbf{262} (2012), no.~3, 1043--1061. \MR{2863855}

\bibitem[Doo72]{MR0405603}
J.~L. Doob, \emph{The structure of a {M}arkov chain}, Proceedings of the
  {S}ixth {B}erkeley {S}ymposium on {M}athematical {S}tatistics and
  {P}robability ({U}niv. {C}alifornia, {B}erkeley, {C}alif., 1970/1971), {V}ol.
  {III}: {P}robability theory, Univ. California Press, Berkeley, Calif., 1972,
  pp.~131--141. \MR{0405603 (53 \#9395)}

\bibitem[DS88]{DS88b}
Nelson Dunford and Jacob~T. Schwartz, \emph{Linear operators. {P}art {II}},
  Wiley Classics Library, John Wiley \& Sons Inc., New York, 1988, Spectral
  theory. Selfadjoint operators in Hilbert space, With the assistance of
  William G. Bade and Robert G. Bartle, Reprint of the 1963 original, A
  Wiley-Interscience Publication. \MR{1009163 (90g:47001b)}

\bibitem[GHK{\etalchar{+}}14]{Georgakopoulos2014}
Agelos Georgakopoulos, Sebastian Haeseler, Matthias Keller, Daniel Lenz, and
  Rados{\l}aw~K. Wojciechowski, \emph{Graphs of finite measure}, Journal de
  Math{\'e}matiques Pures et Appliqu{\'e}es (2014), no.~0, --.

\bibitem[GP14]{MR3158629}
Igor Gorodezky and Igor Pak, \emph{Generalized loop-erased random walks and
  approximate reachability}, Random Structures Algorithms \textbf{44} (2014),
  no.~2, 201--223. \MR{3158629}

\bibitem[GPS99]{MR1710743}
Thierry Giordano, Ian~F. Putnam, and Christian~F. Skau, \emph{Full groups of
  {C}antor minimal systems}, Israel J. Math. \textbf{111} (1999), 285--320.
  \MR{1710743 (2000g:46096)}

\bibitem[Her12]{MR2982692}
Sa'ar Hersonsky, \emph{Boundary value problems on planar graphs and flat
  surfaces with integer cone singularities, {I}: {T}he {D}irichlet problem}, J.
  Reine Angew. Math. \textbf{670} (2012), 65--92. \MR{2982692}

\bibitem[HPS92]{MR1194074}
Richard~H. Herman, Ian~F. Putnam, and Christian~F. Skau, \emph{Ordered
  {B}ratteli diagrams, dimension groups and topological dynamics}, Internat. J.
  Math. \textbf{3} (1992), no.~6, 827--864. \MR{1194074 (94f:46096)}

\bibitem[J{\o}r81]{Jor81}
Palle E.~T. J{\o}rgensen, \emph{A uniqueness theorem for the
  {H}eisenberg-{W}eyl commutation relations with nonselfadjoint position
  operator}, Amer. J. Math. \textbf{103} (1981), no.~2, 273--287. \MR{610477
  (82g:81033)}

\bibitem[Jor08]{MR2432048}
Palle E.~T. Jorgensen, \emph{Essential self-adjointness of the
  graph-{L}aplacian}, J. Math. Phys. \textbf{49} (2008), no.~7, 073510, 33.
  \MR{2432048 (2009k:47099)}

\bibitem[Jor11]{Jor11}
\bysame, \emph{A sampling theory for infinite weighted graphs}, Opuscula Math.
  \textbf{31} (2011), no.~2, 209--236. \MR{2747308 (2012d:05271)}

\bibitem[JP10]{JP10}
Palle E.~T. Jorgensen and Erin Peter~James Pearse, \emph{A {H}ilbert space
  approach to effective resistance metric}, Complex Anal. Oper. Theory
  \textbf{4} (2010), no.~4, 975--1013. \MR{2735315 (2011j:05338)}

\bibitem[JP11]{JP11}
Palle E.~T. Jorgensen and Erin P.~J. Pearse, \emph{Resistance boundaries of
  infinite networks}, Random walks, boundaries and spectra, Progr. Probab.,
  vol.~64, Birkh\"auser/Springer Basel AG, Basel, 2011, pp.~111--142.
  \MR{3051696}

\bibitem[KL12]{Keller_2012}
Matthias Keller and Daniel Lenz, \emph{Dirichlet forms and stochastic
  completeness of graphs and subgraphs}, Journal f{\"u}r die reine und
  angewandte Mathematik (Crelles Journal) \textbf{2012} (2012), no.~666.

\bibitem[KM67]{MR0225999}
Yukio Kusunoki and Shin'ichi Mori, \emph{Some remarks on boundary values of
  harmonic functions with finite {D}irichlet integrals}, J. Math. Kyoto Univ.
  \textbf{7} (1967), 315--324. \MR{0225999 (37 \#1589)}

\bibitem[KPS12]{MR2905788}
Vadim Kostrykin, J{\"u}rgen Potthoff, and Robert Schrader, \emph{Brownian
  motions on metric graphs}, J. Math. Phys. \textbf{53} (2012), no.~9, 095206,
  36. \MR{2905788}

\bibitem[Rob11]{MR2815030}
Thomas Roblin, \emph{Comportement harmonique des densit\'es conformes et
  fronti\`ere de {M}artin}, Bull. Soc. Math. France \textbf{139} (2011), no.~1,
  97--128. \MR{2815030 (2012f:31011)}

\bibitem[Rud91]{MR1157815}
Walter Rudin, \emph{Functional analysis}, second ed., International Series in
  Pure and Applied Mathematics, McGraw-Hill Inc., New York, 1991. \MR{MR1157815
  (92k:46001)}

\bibitem[Saw97]{MR1463727}
Stanley~A. Sawyer, \emph{Martin boundaries and random walks}, Harmonic
  functions on trees and buildings ({N}ew {Y}ork, 1995), Contemp. Math., vol.
  206, Amer. Math. Soc., Providence, RI, 1997, pp.~17--44. \MR{1463727
  (98k:60127)}

\bibitem[Shi83]{MR674604}
Hiroshige Shiga, \emph{On the quasiconformal deformation of open {R}iemann
  surfaces and variations of some conformal invariants}, J. Math. Kyoto Univ.
  \textbf{22} (1982/83), no.~3, 463--480. \MR{674604 (84j:30082)}

\bibitem[Sko13]{MR3046303}
M.~Skopenkov, \emph{The boundary value problem for discrete analytic
  functions}, Adv. Math. \textbf{240} (2013), 61--87. \MR{3046303}

\bibitem[TB13]{MR3039515}
J.~Tosiek and P.~Brzykcy, \emph{States in the {H}ilbert space formulation and
  in the phase space formulation of quantum mechanics}, Ann. Physics
  \textbf{332} (2013), 1--15. \MR{3039515}

\bibitem[{Woj}07]{2007PhDT.......216W}
R.~K. {Wojciechowski}, \emph{{Stochastic Completeness of Graphs}}, Ph.D.
  thesis, PhD Thesis, 2007, 2007.

\end{thebibliography}
\end{acknowledgement*}

\end{document}